\newcounter{abb}
\newcommand{\Abb}[1]{\refstepcounter{abb} \centering \footnotesize \emph{Fig.~\arabic{abb}:} #1}
\newcounter{tab}
\newcommand{\Tab}[1]{\refstepcounter{tab} \centering \footnotesize \emph{Tab.~\arabic{tab}:} #1}
\newcommand{\RR}{\mathscr{R}}
\newcommand{\QQ}{\mathscr{Q}}
\newcommand{\UU}{\mathscr U}
\newcommand{\WW}{\mathscr W}
\newcommand{\R}{\mathbb{R}}
\newcommand{\N}{\mathbb{N}}
\newcommand{\NN}{\mathscr{N}}
\newcommand{\eps}{\varepsilon}
\newcommand{\I}{\mathscr{I}}
\newcommand{\Ft}{\tilde F}
\newcommand{\ft}{\tilde f} 
\newcommand{\myicon}{$\,\,\,\triangleright$}
\newcommand{\diam}{\mathrm{diam}}
\newcommand{\supp}{\mathrm{supp}} 
\newcommand{\loc}{\mathrm{loc}}
\newcommand{\F}{\mathbf{F}}
\newcommand{\nX}{\hat\nabla}
\newcommand{\dist}{\mathrm{dist}} 
\newcommand{\vol}{\mathrm{vol}}
\newcommand{\im}{\mathrm{im}\,} 
\DeclareMathOperator{\grad}{\mathrm{grad}}
\DeclareMathOperator{\id}{\mathrm{id}}
\newtheorem{thm}{Theorem}[section]
\newtheorem{satz}[thm]{Theorem}
\newtheorem{lem}[thm]{Lemma}
\newtheorem{prop}[thm]{Proposition}
\newtheorem{addendum}[thm]{Addendum} 
\newtheorem{cor}[thm]{Corollary} 
\newtheorem{setting}{Setting}
\theoremstyle{definition}
\newtheorem{rem}[thm]{Remark}
\newtheorem{epi}[thm]{Epilogue} 
\newtheorem{dfn}[thm]{Definition} 
\newtheorem{exa}[thm]{Example} 
\newtheorem{notation}[thm]{Notation}
\begin{document}

\title{Local Flexibility for Open Partial Differential Relations} 
\author{Christian B\"ar}
\address{Universit\"at Potsdam, Institut f\"ur Mathematik, 14476 Potsdam, Germany}
\email{\href{mailto:cbaer@uni-potsdam.de}{cbaer@uni-potsdam.de}}
\urladdr{\url{https://www.math.uni-potsdam.de/baer/}}
\author{Bernhard Hanke}
\address{Universit\"at Augsburg, Institut f\"ur Mathematik, 86135 Augsburg, Germany}
\email{\href{mailto:hanke@math.uni-augsburg.de}{hanke@math.uni-augsburg.de}}
\urladdr{\url{https://www.math.uni-augsburg.de/prof/diff/arbeitsgruppe/hanke/}}
\begin{abstract}
We show that local deformations, near closed subsets, of solutions to open partial differential relations  can be extended to global deformations, provided all but the highest derivatives stay constant along the subset.
The applicability of this general result is illustrated  by a number of examples, dealing with convex embeddings of hypersurfaces, differential forms, and lapse functions in Lorentzian geometry.

The main application is a general approximation result by sections which have very restrictive local properties on open dense subsets.
This shows, for instance, that given any $K \in \R$ every manifold of dimension at least two carries a complete $C^{1,1}$-metric which, on a dense open subset, is smooth with constant sectional curvature $K$. 
Of course, this is impossible for $C^2$-metrics in general. 
\end{abstract}

\keywords{Open partial differential relations, local flexibility, convex hypersurfaces, positive sectional curvature, deforming differential forms, lapse function, counter-intuitive approximations}

\subjclass[2010]{Primary: 53C23, 58A20 ;  Secondary: 53C20, 53C42, 53C50, 58A10}

\maketitle

\section{Introduction}
In his landmark monograph~\cite{Gromov86} Gromov develops a wide-ranging perspective on flexibility phenomena in geometry and topology. 
Besides a comprehensive theoretical background and numerous results, including the  well-known $h$-principle for open Diff-invariant relations and the convex integration technique, Gromov's text provides many exercises which seemingly play a minor role for the architecture of the theory, but some of which bear a great value of their own, both in terms of theoretical insight and applications. 
This article is devoted to one such topic, the \textit{Local Flexibility Lemma}, see the exercise ``Weak Flexibility Lemma'' in \cite[Section~ 2.2.7~(H')]{Gromov86} and the ``Cut-off Homotopy Lemma'' in \cite[p.~693 f.]{Gromov2018}.
It concerns extensions of local deformations of solutions to open partial differential relations. 

We will formulate and prove this flexibility lemma in Theorem~\ref{thm:FlexLem1}.
It has important applications in many fields of mathematics, which will be illustrated by examples from hypersurface theory, geometric structures induced by differential forms, and Lorentzian geometry.

Our main application of Theorem~\ref{thm:FlexLem1} is Theorem~\ref{thm:solveondense} which states under relatively mild assumptions that any section of a fiber bundle can be approximated by sections which have very restrictive local properties on open dense subsets.

We give three sample applications of Theorem~\ref{thm:solveondense}.
Firstly, we show in Corollary~\ref{cor:Lipschitz} that $C^1$-functions on a compact interval can be uniformly approximated by Lipschitz functions which are smooth on open dense subsets and have prescribed derivative there.

Secondly, we show in Corollary~\ref{cor:einbett} that any $C^2$-embedding of a surface in $\R^3$ can be $C^1$-approximated by $C^{1,1}$-embeddings which are analytic and have prescribed constant Gauss curvature on an open dense subset.
Obviously, $C^{1,1}$ cannot be replaced by $C^2$ in this statement. 
In other words: $C^{1,1}$ is the maximal order of regularity  for which this kind of flexibility holds. 
This is reminiscent of the Nash-Kuiper embedding theorem (\cite{Kuiper,Nash}) which states that each short smooth embedding of a compact Riemannian $n$-manifold $V$ into $\R^k$ with $k\ge n+1$ can be $C^0$-approximated by \emph{isometric} $C^1$-embeddings. 
Here the critical exponent $\alpha$ for which approximating isometric $C^{1, \alpha}$-embeddings exist is unknown and subject  to current research, see \cite{CLS2012} and subsequent work.

Thirdly, we show in Corollary~\ref{satz:approxpos} that given $K \in \R$ any $C^2$-Riemannian metric on a manifold $V$ of dimension at least two can be approximated in the strong $C^1$-topology by $C^{1,1}_\loc$-metrics which, on open dense subsets of $V$, are smooth with constant sectional curvature equal to $K$.  
Clearly, this approximation cannot be done by $C^2$-metrics because then the curvature would be continuous and equal to $K$ on all of $V$, which is only possible in exceptional cases. 

This means, for example, that a compact surface of higher genus carries a $C^{1,1}$-metric which, on an open dense subset,  is smooth with constant  Gauss curvature equal to $1$, despite the fact that the Gauss-Bonnet theorem holds for these metrics.
Indeed, this  is not a contradiction because open dense subsets need not have full measure.
However, it is remarkable that the relevant curvature information entering the Gauss-Bonnet formula can  be concentrated on a nowhere dense subset, although this information governs the global topology. 

To formulate local flexibility precisely we will work in the following

\begin{setting}
\label{setting}
We denote by
\begin{enumerate}[\myicon]
\item 
$V$ a smooth manifold;
\item
$V_0 \subset V$ a closed subset;
\item
$U$ an open neighborhood of $V_0$ in $V$;
\item
$X\to V$ a smooth fiber bundle;
\item
$k\in\N_0$ a nonnegative integer;
\item
$\RR\subset J^kX$ an open subset;
\item
$f_0$ a $C^{k}$-section on $V$, solving $\RR$;
\item
$F :[0,1] \to C^k(U,X)$ a continuous path such that each $F(t)$ solves $\RR$ over $U$.
\end{enumerate}
Furthermore, we assume $f_0|_U = F(0)$ and $j^{k-1}F(t)|_{V_0}  = j^{k-1}f_0|_{V_0}$ for all $t \in [0,1]$.
\end{setting}

Here $J^kX\to V$ denotes the $k^\mathrm{th}$ jet bundle of $X$ and $j^kf$ is the $k$-jet of a section $f$.
We say that a $C^k$-section $f$ of $X\to V$ \emph{solves} $\RR$ if $j^kf(v)\in\RR$ for all $v\in V$.
By $C^k(U,X)$ we denote the space of $k$-times continuously differentiable sections of the bundle $X|_U\to U$ equipped with the weak $C^k$-topology.
We use the notation $\N=\{1,2,3,\ldots\}$ and $\N_0=\{0,1,2,3,\ldots\}$.

\begin{center}
\includegraphics[scale=0.3]{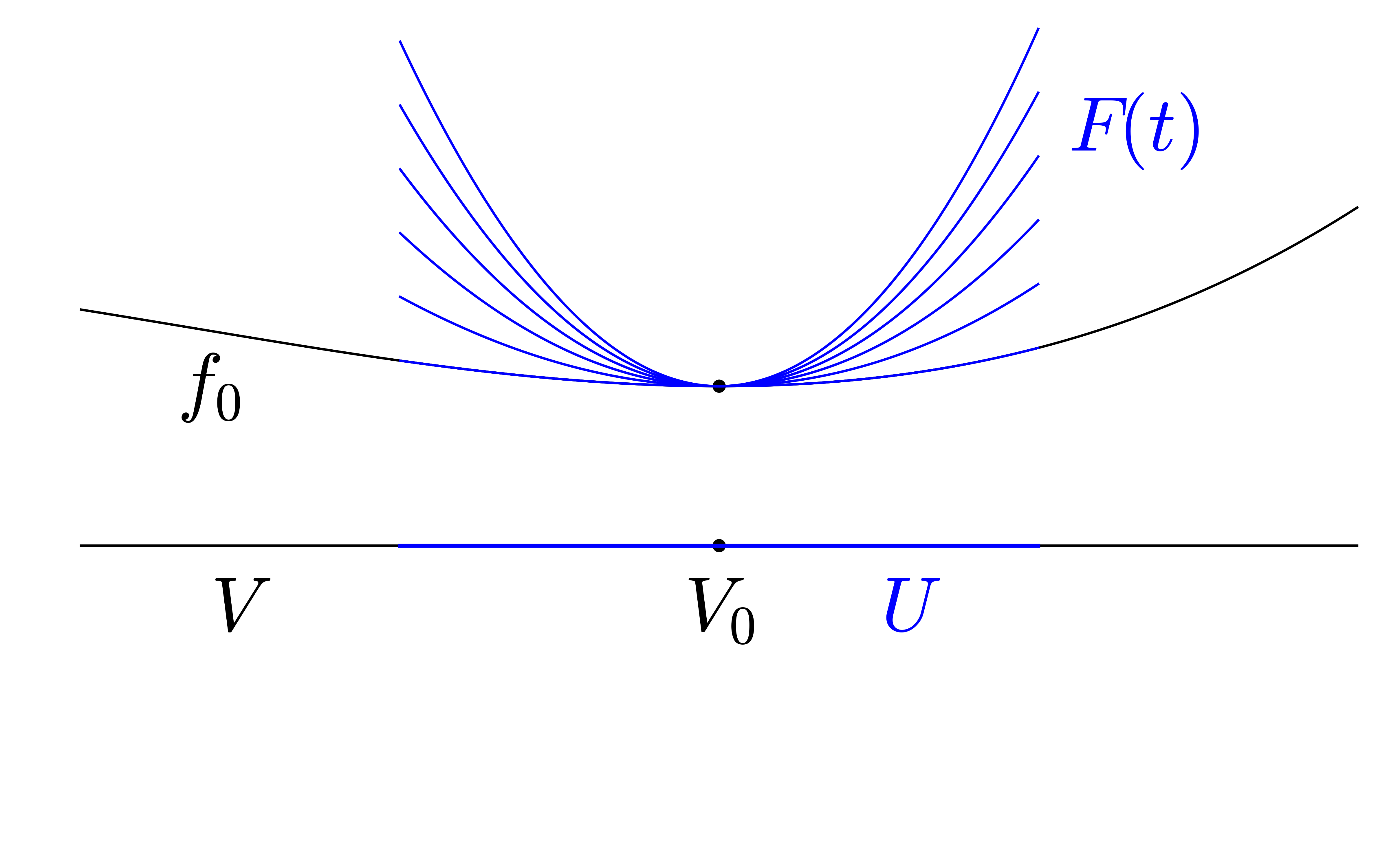}

\vspace{-5mm}
\Abb{Setting~\ref{setting} for $k=2$ and $V_0$ a point}
\label{fig:1}
\end{center}

\begin{dfn}\label{def:GromovFlexible}
In Setting~\ref{setting} we say that \emph{local flexibility holds} if there exists an open subset $U_0$ with $V_0\subset U_0 \subset U\subset V$ and a continuous $f:[0,1]\to C^k(V,X)$ such that 
\begin{enumerate}[\myicon]
\item 
each $f(t)$ is a section of $X$, solving $\RR$;
\item
$f(0)=f_0$;
\item
$f(t)|_{U_0}=F(t)|_{U_0}$ for all $t\in[0,1]$;
\item
$f(t)|_{V\setminus U}=f_0|_{V\setminus U}$ for all $t\in[0,1]$.
\end{enumerate}
\end{dfn}

\begin{center}
\includegraphics[scale=0.3]{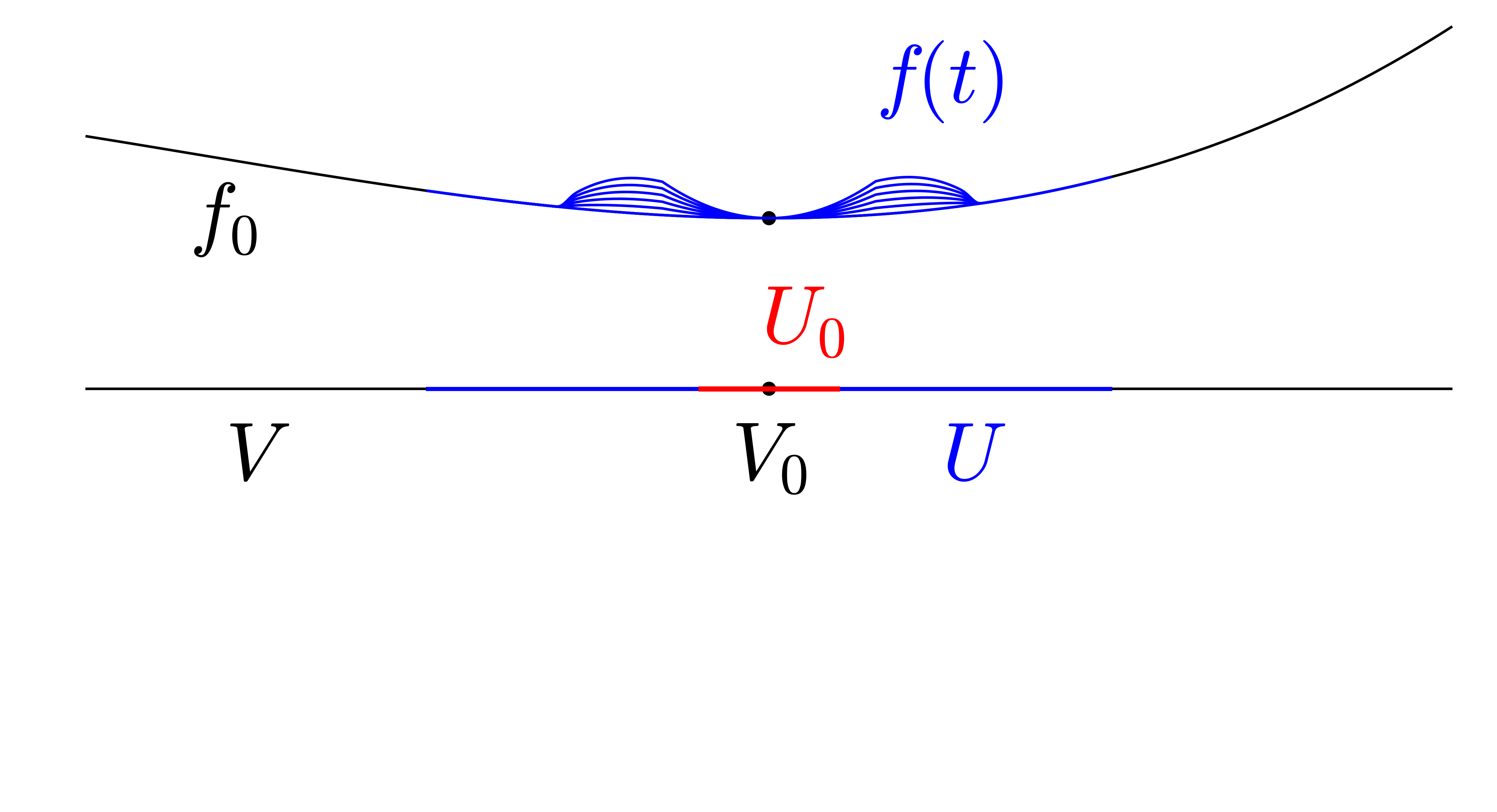}

\vspace{-8mm}
\Abb{Local flexibility holds for $k=2$ and $V_0$ a point}
\label{fig:2}
\end{center}

\begin{thm}\label{thm:FlexLem1}
Suppose we are in Setting~\ref{setting}. 
Then local flexibiliy holds. 

Moreover, let $\kappa\in\{k,k+1,\ldots,\infty\}$, $\ell\in\{0,1,\ldots,\infty\}$, and assume in addition that $f_0\in C^\kappa(V,X)$ and $F\in C^\ell([0,1],C^\kappa(U,X))$. 
Then we can find $f\in C^\ell([0,1],C^\kappa(V,X))$.
\end{thm}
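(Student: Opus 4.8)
The plan is to reduce the global statement to a local one via a partition-of-unity / patching argument, where the genuinely new input is a \emph{local} flexibility statement on a single chart neighborhood of a point of $V_0$, obtained by an explicit interpolation (cut-off homotopy) construction. More precisely, I would first treat the model case where $V_0$ is a single point $p$ and $V=\R^n$ with $X$ a trivial bundle, and $U$ a ball around $p$. The idea is that since $\RR\subset J^kX$ is open and $j^kf_0(p)$ lies in $\RR$, there is a whole neighborhood of this jet inside $\RR$; one then tries to write the desired deformation $f(t)$ near $p$ as $f_0$ plus a correction term that agrees with $F(t)-f_0$ on a very small ball $U_0$ around $p$ and vanishes outside a slightly larger ball, interpolating in between by a smooth cut-off $\chi$. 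The key point, and the reason the hypothesis $j^{k-1}F(t)|_{V_0}=j^{k-1}f_0|_{V_0}$ is there, is that after subtracting the common $(k-1)$-jet the difference $F(t)-f_0$ and all its derivatives up to order $k-1$ vanish at $p$; so on a small enough ball the $C^{k-1}$-norm of $F(t)-f_0$ is as small as we like, and then $\chi\cdot(F(t)-f_0)$, together with its derivatives up to order $k-1$, stays small — while the top ($k$-th) derivative picks up a bounded term from differentiating $\chi$ and an arbitrarily small term from $F(t)-f_0$. Choosing the radii carefully keeps the full $k$-jet of $f_0+\chi\cdot(F(t)-f_0)$ within the chosen neighborhood of $j^kf_0$ in $\RR$, hence the interpolated section solves $\RR$; continuity in $t$ (and, for the addendum, $C^\ell$-dependence on $t$ and $C^\kappa$-regularity in $V$) is inherited from $F$ and $f_0$ since $\chi$ is fixed and smooth.

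Next I would globalize. For a general closed subset $V_0$, cover $V_0$ by countably many coordinate charts $W_i$ on which the previous model construction applies, with associated small balls $U_0^i \Subset W_i$; using paracompactness choose this cover to be locally finite and the $W_i$ to be relatively compact inside $U$. On each $W_i$ the model case gives a cut-off function and hence an interpolation; the subtlety is that a section must be patched across overlapping charts while staying inside $\RR$. Here I would proceed one chart at a time (an exhaustion argument): having already modified $f_0$ on $W_1,\dots,W_{i-1}$ to a section $g_{i-1}$ that still solves $\RR$ and agrees with $F(t)$ on a neighborhood of $V_0\cap(W_1\cup\dots\cup W_{i-1})$, apply the model construction on $W_i$ with a cut-off supported so close to $V_0\cap W_i$ that the new correction is small in $C^{k-1}$ and the resulting $g_i$ still solves $\RR$ (again using openness of $\RR$ plus compactness to get a uniform margin on the relevant compact piece), and such that $g_i$ agrees with $F(t)$ near $V_0\cap(W_1\cup\dots\cup W_i)$. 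Taking $U_0$ to be the union of the resulting neighborhoods and $f(t)$ the ``limit'' of the $g_i(t)$ — which is a locally finite modification, hence well defined and as regular as the pieces — yields the required $f$, with $f(t)=f_0$ outside the (locally finite) union of the chart supports, which we arrange to lie inside $U$.

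The addendum on regularity is then essentially bookkeeping: all operations used — multiplication by fixed smooth cut-offs, addition, restriction, and the locally finite patching — preserve $C^\kappa$-regularity in the $V$-direction and $C^\ell$-dependence on the parameter $t$, because the cut-offs do not depend on $t$ and the only $t$-dependence enters through $F$ and the fixed $f_0$. So if $f_0\in C^\kappa(V,X)$ and $F\in C^\ell([0,1],C^\kappa(U,X))$, each $g_i(t)$, and hence $f(t)$, lies in $C^\kappa(V,X)$ with $C^\ell$-dependence on $t$.

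I expect the main obstacle to be the quantitative control in the globalization step: one must choose the successive cut-off supports shrinking fast enough that, on each relevant compact set, the accumulated corrections keep \emph{all} the intermediate jets $j^k g_i$ inside $\RR$ uniformly — this requires carefully interleaving the openness-of-$\RR$ margins (which come from compactness of the pieces of $V_0$ being handled) with the smallness of the $C^{k-1}$-norms of the corrections, and making sure the bounded contribution of the top derivative of the cut-off, though not small, still fits inside the available margin. Making this simultaneous over the parameter $t\in[0,1]$, using compactness of $[0,1]$ and continuity of $F$, is the technical heart of the argument.
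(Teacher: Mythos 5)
Your model case is not correct as written, and the globalization does not reduce to it. For $V_0=\{p\}$: the interpolation $f_0+\chi\cdot(F(t)-f_0)$ has, at a point $x$ in the transition annulus, leading $k$-jet $(1-\chi(x))\,j^kf_0(x)+\chi(x)\,j^kF(t)(x)$, a \emph{convex combination}; since $\RR$ is not assumed convex this need not lie in $\RR$ even though both endpoints do. The paper instead uses the ansatz $f(t)(v)=\Ft(t\tau(v),v)$, i.e.\ it reparametrizes the given path $F$, so that the leading $k$-jet is $j^kF(t\tau(x))(x)\in\RR$ automatically because every $F(s)$ solves $\RR$. Independently, your claim that the top-derivative errors can be controlled by ``choosing the radii carefully'' is wrong for a rescaled standard bump: the cross terms $D^\beta\chi\cdot D^{\alpha-\beta}(F(t)-f_0)$ with $|\alpha|=k$ and $|\beta|\ge1$ are of size $O(\eps^{-|\beta|})\cdot O(\eps^{|\beta|})=O(1)$, uniformly as the cutoff radius $\eps\to0$, so they are bounded but never small. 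The paper needs the \emph{logarithmically efficient} cutoffs $\tau_{\delta,\eps}$ of Lemma~\ref{lem:DefFunction}, with $|\tau_{\delta,\eps}^{(m)}(r)|\le C_m\,r^{-m}|\ln\delta|^{-1}$; it is the extra factor $|\ln\delta|^{-1}$, not the radius, that turns these $O(1)$ terms into $o(1)$.

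For general closed $V_0$, covering it by charts $W_i$ and proceeding chart by chart does not reduce to the point case: each $V_0\cap W_i$ is still an arbitrary closed subset, and one cannot choose a cutoff that is $\equiv1$ on a full neighborhood of $V_0\cap W_i$ while its derivatives match the Taylor smallness of $F(t)-f_0$, which holds only in directions transverse to $V_0$. The paper resolves this with the \emph{generalized tangent spaces} of arbitrary closed sets (Definition~\ref{dfn:tangent}), the semicontinuity stratification $\emptyset=\Sigma_{-1}\subset\cdots\subset\Sigma_n=V_0$ by tangent-space dimension, anisotropic cutoffs (Lemma~\ref{lem:cutoff}) that transition over a short scale normal to the generalized tangent space at each covering center and a long scale along it, and Taylor estimates measured relative to the affine tangent space $a+T_aV_0$ rather than to the single point $a$ (Lemma~\ref{lem:taylor}). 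This machinery, wholly absent from your proposal, is the technical heart of the proof of Theorem~\ref{thm:FlexLem1}.
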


\begin{rem} \label{rem:Cheeky_Misha} The ``Weak Flexibility Lemma'' in \cite[Section~ 2.2.7~(H')]{Gromov86} and the ``Cut-off Homotopy Lemma'' in  \cite[p.~693 f.]{Gromov2018} formulate (without proof) versions of Theorem~\ref{thm:FlexLem1} under strong regularity assumptions on $V_0$. 
The footnote on \cite[p. 693]{Gromov2018} speculates about local flexiblity for all closed subsets $V_0 \subset V$.
This is what is proved in the present paper.

While  the applications worked out in the paper at hand  are based on local flexibility for smooth submanifolds $V_0 \subset V$, we expect that local flexibility for more general closed subsets $V_0 \subset V$ may also have interesting  consequences. 

\end{rem} 

\begin{rem} \label{rem:Other_CutOff} The sections $f(t)$ are obtained from $F(t)$ by multiplying the homotopy parameter $t$ with an appropriate cutoff function near $V_0$. 
It is then relatively straighforward to control the $(k-1)$-jets of $f(t)$ over $V$; compare \cite[Theorem 1.5]{AG18} for a related result in the context of holonomic approximations near polyhedral subsets $V_0 \subset V$ of positive codimension. 

For us  it is crucial that a careful choice of cutoff function allows to control the full $k$-jets of $f(t)$ over $V$, using the assumption that $j^{k-1} F(t)|_{V_0}$ is constant in $t$. 
If $V_0 \subset V$ is a compact smooth submanifold we may in fact use a cutoff function $\tau_{\delta, \eps}(r)$ as in Lemma  \ref{lem:DefFunction}, where $r$ is the distance to $V_0$. 
\end{rem} 

Our paper is structured as follows:
The next section introduces the notion of generalized tangent spaces along arbitrary subsets of smooth manifolds and applies this concept to construct efficient cutoff functions near these subsets. 
This construction is essential for the proof of local flexibility in the subsequent section, in which we also discuss the necessity of the assumptions in Setting~\ref{setting}, treat a family version of local flexibility  and provide a homotopy theoretic interpretation.

In the fourth section we illustrate the usefulness of local flexibility by examples from different mathematical contexts.
We start by considering the standard sphere $S^n\subset \R^{n+1}$.
It is $1$-convex and rigid in the sense that one cannot deform it in such a way that it becomes $\mu$-convex near the north pole for some $\mu>1$ while keeping it unchanged on the southern hemisphere and $1$-convex everywhere.
However, local flexibility shows that a deformation is possible if we only demand that it stays $(1-\eps)$-convex everywhere.

In the second example we deform closed differential forms satisfying an open relation along a submanifold through such forms.
This applies in particular to symplectic forms where we recover a statement usually derived using the so-called Moser trick.
The method also applies to closed $G_2$-structures on $7$-manifolds, where the result is new, and to codimension-$1$-foliations in any dimension.

Then we deal with Lorentzian manifolds.
We show that given a spacelike Cauchy hypersurface $\Sigma$ we can find a Cauchy time function in such a way that we can prescribe the lapse function along~$\Sigma$.

The fifth section is devoted to formulate and prove Theorem~\ref{thm:solveondense} together with the applications to Lipschitz functions and surface embeddings. 
The proof of Theorem~\ref{thm:solveondense} is by repeated application of local flexibility to $V_0$ being a point from a countable dense subset of $M$ and passage to a limit.

Our main application of Theorem~\ref{thm:solveondense} to Riemannian metrics is Corollary~\ref{satz:approxpos} in the sixth section concerning the existence of $C^{1,1}$-metrics which, on open dense subsets, are smooth and of constant sectional curvature $K \in \R$. 
In this section we also use the family version of local flexibility to show that, on a fixed manifold $V$ and point $p\in V$, the inclusion of metrics with positive sectional curvature which is equal to $K>0$ near $p$ into the space of all positively curved metrics is a weak homotopy equivalence. 
The appendices contain the proofs of local flexibility for $k = 0$, of an auxiliary lemma needed for the proof of local flexibility for $k \geq 1$, and of the Gauss-Bonnet theorem for compact surfaces with a metric of low regularity.

\medskip

\textit{Acknowledgment.} 
We are grateful to Misha Gromov for pointing out the relevance of the local flexibility lemma, to Luis Florit and Sebastian Goette for useful conversations, to Burkhard Wilking whose insightful remarks helped us to improve Theorem~\ref{thm:solveondense}, and to an anonymous referee for the suggestions on how to improve the presentation.
The authors were supported by SPP 2026 funded by Deutsche Forschungsgemeinschaft.

\section{Generalized tangent spaces and efficient cutoff functions}

This section contains preparatory material for the proof of Theorem~\ref{thm:FlexLem1}.
We will have to construct efficient cutoff functions near arbitrary closed subsets of a manifold.
Since closed subsets can be very irregular, we introduce the concept of generalized tangent spaces, which mimick classical tangent spaces of submanifolds.
The decay of our cutoff functions will have to be chosen differently in the direction of the tangent spaces and in those perpendicular to them.

\begin{dfn}  \label{dfn:tangent} 
Let $V$ be an $n$-dimensional $C^1$-manifold and let $A \subset V$ be an arbitrary subset. 
For  $a \in A$ let $\mathscr{G}_{A,a}$ denote the set of $C^1$-germs at $a$ which vanish on $A$. 
More precisely, each element in  $\mathscr{G}_{A,a}$ is represented by a $C^1$-function $h : U \to \R$ defined on an open neighborhood $a \in  U \subset V$ such that  $h|_{A \cap U} = 0$. 
Then we call 
\[
      T_a A := \bigcap_{ h \in \mathscr{G}_{A,a}}  \ker( d_a h) \subset T_a V   
\]
the {\em generalized tangent space} of $A$ at $a$. 
Here $d_a h: T_a V \to \R$ is the differential of $h$ at $a$. 
\end{dfn} 

\begin{rem} 
The generalized tangent space $T_aA$ is a linear subspace of $T_aV$.
If $A \subset V$ is a $C^1$-submanifold then this reproduces the classical tangent space of $A$ at $a$.

If $\dim(T_aA)=m$ then we can find $h_1,\ldots,h_{n-m}:U\to\R$ such that $d_ah_1,\ldots,d_ah_{n-m}$ are linearly independent, $T_aA=\ker(d_ah_1)\cap\cdots\cap\ker(d_ah_{n-m})$ and $A$ is contained in the $m$-dimensional $C^1$-submanifold $\{h_1=\cdots=h_{n-m}=0\}$ near $a$.
 
The function $a \mapsto \dim T_a A$ is upper semicontiuous, i.e.\ every $a \in A$ has a neighborhood $a \in U \subset V$ such that $\dim T_a A \geq \dim T_{a'} A$ for all $a' \in A \cap U$. 
\end{rem} 

Let $A \subset V$ and set $\Sigma_{\ell} := \{ a \in  A \mid \dim T_a A \geq n - \ell \}$ for $-1 \leq \ell \leq n$. 
By the upper semicontinuity of $a \mapsto \dim T_a A$, the sets $\Sigma_\ell$ are closed in $A$.
They form a chain 
\begin{equation} \label{eq:chain} 
     \emptyset = \Sigma_{-1} \subset \cdots \subset \Sigma_n = A 
\end{equation} 
with  $\overline{\Sigma_{\ell} \setminus \Sigma_{\ell-1}} \subset \overline{\Sigma}_\ell = \Sigma_{\ell}$ for $0 \leq \ell\leq n$ and where the closures are taken in $A$. 

\begin{dfn}  
A subset $K \subset A$ is called {\em uniform} if $\dim T_a A = \dim T_{a'} A$ for all $a, a' \in K$. 
In other words, $K \subset \Sigma_{\ell}\setminus \Sigma_{\ell-1}$ for some $0 \leq \ell \leq n$.
\end{dfn}

\begin{exa} Let $V = \R^2$. 
\begin{enumerate}[\myicon]
\item 
Let $A=\{(t,|t|) \mid t\in\R\}$.
Then $T_aA=T_aV$ for $a=(0,0)$ while $T_aA$ is the usual $1$-dimensional tangent space for all other points $a\in A$.
In \eqref{eq:chain} we have $\Sigma_0  = \{ ( 0,0) \}$ and $\Sigma_1 =  \Sigma_2 = A$. 
\item 
Let $A = \{ | v | \leq 1\}$. 
Then $T_a A = T_a V$ for all $a \in A$. 
Hence $\Sigma_0=\Sigma_1=\Sigma_2=A$.
\item 
Let $A_1 = \{ ( t, t^2) \mid t \in \R\}$ and $A_2 = \{ (t,0) \mid t \in \R \}$. 
Then, for $a = (0,0)$, we get $T_a A_1 = T_a A_2 = \R \times 0 \subset T_a V$ while $T_a ( A_1 \cap A_2) = 0$ and $T_a (A_1 \cup A_2) = T_a V$. 
The last equation follows from the fact that, near $a$, the set $A_1 \cup A_2$ is not contained in a $1$-dimensional submanifold and hence $\dim(T_aA)>1$.
\item 
Let $A = \{ (1/n, 0) \mid n \in \N\} \cup \{ (0,0)\} $. 
Then $T_a A = 0$ for $a = (1/n,0)$, while $T_a  A = \R \times 0 \subset T_a V$ for $a = (0,0)$. 
In general, for a discrete subset $D \subset V$ and $A := \overline{D}$, we have $T_a A = 0$ for $a \in D$, while $T_a A$ depends on the accumulation behaviour of $D$ near $a$ for $a \in A \setminus D$.
\end{enumerate} 
\end{exa}

In the remainder of this section we will specialize to the case $V = \R^n$. 
Let $A \subset V$ be an arbitrary subset. 
We use the canonical identifications $T_x \R^n = \R^n$ for $x \in \R^n$.

\begin{notation} For $B \subset \R^n$, $a \in A$, and $x \in \R^n$ we write 
\begin{enumerate}[\myicon]
\item $\dist(x, B)$ for the Euclidean distance of $x$ to $B$; 
\item $r_a (x) = \dist( x , a + T_a A)$ for the distance of $x$ to the affine subspace $a + T_a A \subset \R^n$, the generalized tangent space with footpoint $a$; 
\item $B(\eps,x) \subset \R^n$ for the open $\eps$-ball around $x$. 
\end{enumerate}
\end{notation}

\begin{lem} \label{lem:approx} 
Let $K \subset A$ be a compact and uniform subset.
Let $\delta > 0$. 

Then there exists $\eta>0$ such that for all $\eps\in(0,\eta)$ and all $a\in K$ we have
\begin{enumerate}[(i)] 
\item \label{approx-i}
$A\cap B(\eps, a) \subset \{r_{a} < \delta \eps \}$; 
\item \label{approx-ii}
$ |r_{a'} - r_{a} |_{B(\eps, a)} < \delta \eps$ for all $a' \in K \cap B(\eps, a)$.
\end{enumerate}
\end{lem}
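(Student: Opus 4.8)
The plan is to argue by contradiction, exploiting the compactness of $K$ and the definition of $T_aA$ via germs of $C^1$-functions vanishing on $A$. Suppose the conclusion fails for some $\delta>0$. Then there is a sequence $\eps_j\to 0$ and points $a_j\in K$ for which \eqref{approx-i} or \eqref{approx-ii} is violated at scale $\eps_j$. Passing to a subsequence, $a_j\to a_\infty\in K$ by compactness, and all the $a_j$ (and $a_\infty$) lie in one uniform stratum, so $\dim T_{a}A=n-\ell$ is constant on $K$; write $m:=n-\ell$ for this common dimension. The strategy is then to produce, near $a_\infty$, enough $C^1$-functions vanishing on $A$ to pin down the tangent space and thereby contradict the blow-up.

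For part \eqref{approx-i}: after rescaling the picture around $a_j$ by $1/\eps_j$, a violation means there are points $x_j\in A\cap B(\eps_j,a_j)$ with $r_{a_j}(x_j)\ge\delta\eps_j$, i.e. $(x_j-a_j)/\eps_j$ lies in the unit ball but at distance $\ge\delta$ from $T_{a_j}A$. Passing to a further subsequence, $(x_j-a_j)/\eps_j\to w$ with $|w|\le 1$ and $\dist(w,T_{a_\infty}A)\ge\delta$ — here one needs that $T_{a_j}A\to T_{a_\infty}A$, or at least does not degenerate, which follows from upper semicontinuity of $a\mapsto\dim T_aA$ combined with the fact that $K$ is uniform (so the dimension is exactly $m$ at every $a_j$, forcing the Grassmannian limit of the $T_{a_j}A$ to contain $T_{a_\infty}A$, hence to equal it for dimension reasons). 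Now pick $C^1$-germs $h_1,\dots,h_{n-m}$ at $a_\infty$ vanishing on $A$ whose differentials $d_{a_\infty}h_i$ span the annihilator of $T_{a_\infty}A$; since $\dist(w,T_{a_\infty}A)\ge\delta$, some linear combination $h=\sum c_ih_i$ has $d_{a_\infty}h(w)\ne 0$. But $h(x_j)=0=h(a_j)$ (for $j$ large, $x_j$ and $a_j$ lie in the common domain of $h$), so by the $C^1$ mean value estimate $0=h(x_j)-h(a_j)=d_{a_j}h(x_j-a_j)+o(|x_j-a_j|)$; dividing by $\eps_j$ and letting $j\to\infty$ gives $d_{a_\infty}h(w)=0$, a contradiction.

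For part \eqref{approx-ii}: a violation gives $a'_j\in K\cap B(\eps_j,a_j)$ and $y_j\in B(\eps_j,a_j)$ with $|r_{a'_j}(y_j)-r_{a_j}(y_j)|\ge\delta\eps_j$. Rescaling by $1/\eps_j$ about $a_j$, set $p_j:=(a'_j-a_j)/\eps_j$ and $q_j:=(y_j-a_j)/\eps_j$, both in the unit ball; passing to a subsequence $p_j\to p$, $q_j\to q$, and (as above) $a'_j\to a_\infty$ with $T_{a'_j}A\to T_{a_\infty}A=:T$ and $T_{a_j}A\to T$. The functions $x\mapsto r_{a_j}(x)=\dist(x,a_j+T_{a_j}A)$, rescaled, converge locally uniformly to $x\mapsto\dist(x,T)$, and similarly the rescaled $r_{a'_j}$ converge to $x\mapsto\dist(x,p+T)=\dist(x-p,T)=\dist(x,T)$ since $p\in T$ (because $p$ is the limit of $(a'_j-a_j)/\eps_j$ with both points in $A$, and the argument of part \eqref{approx-i} applied at $a_j$ shows any such limit lies in $T_{a_\infty}A$). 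Hence the rescaled difference $|r_{a'_j}-r_{a_j}|$ evaluated at $q_j$ tends to $0$, contradicting the lower bound $\delta$.

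The main obstacle is the continuity of the generalized tangent spaces, i.e. controlling the limit of $T_{a_j}A$ in the Grassmannian. Upper semicontinuity of the dimension alone gives $\dim T_{a_\infty}A\ge\dim T_{a_j}A$, but since $K$ is \emph{uniform} these dimensions agree, and one must then show the subspace limit actually equals $T_{a_\infty}A$: any germ $h$ at $a_\infty$ vanishing on $A$ restricts to germs near the nearby $a_j$ vanishing on $A$, so $d_{a_j}h$ annihilates $T_{a_j}A$; passing to the limit, $d_{a_\infty}h$ annihilates $\lim T_{a_j}A$, so $T_{a_\infty}A^\perp\subseteq(\lim T_{a_j}A)^\perp$ — wait, this gives the containment $\lim T_{a_j}A\subseteq T_{a_\infty}A$, and equality then follows from the matching dimensions. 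Once this compactness-of-tangent-spaces fact is in place, both parts reduce to the elementary $C^1$ mean value estimate together with local uniform convergence of distance functions to affine subspaces, which is routine.
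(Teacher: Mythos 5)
Your proof is correct, but it takes a genuinely different route from the paper's. The paper gives a \emph{constructive} proof: it builds, near each $a\in K$, an ``adapted'' $C^1$-map $h:U\to\R^q$ vanishing on $A$ with $T_aA=\ker d_ah$ and $d_xh$ of maximal rank, applies the implicit function theorem to its zero set, and produces from the gradients of $h$ (via Gram--Schmidt) a continuous orthonormal frame $H_1,\ldots,H_q$ for the normal bundle; these $H_j$ then appear explicitly in the quantitative estimate proving \eqref{approx-ii}. The uniform $\eta$ is extracted by showing that the maximal admissible radius $\eta_a$ for each $a$ is lower semicontinuous on the compact set $K$, hence attains a positive minimum. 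You instead argue by contradiction, rescaling by $1/\eps_j$ around exceptional points $a_j$ and invoking compactness of the Grassmannian to get a subsequential limit $L$ of the $T_{a_j}A$. The key observation that any germ $h\in\GG_{A,a_\infty}$ yields $T_{a_j}A\subseteq\ker d_{a_j}h$ for large $j$, forcing $L\subseteq T_{a_\infty}A$ and then $L=T_{a_\infty}A$ by dimension count (this is exactly where uniformity of $K$ enters), replaces the adapted-map machinery entirely. Both parts then reduce to a $C^1$ Taylor/mean-value estimate at the moving basepoint and continuity of $(z,T)\mapsto\dist(z,T)$. Your approach is conceptually shorter and dispenses with the implicit function theorem and the normal frame, but it is non-constructive: you obtain no description of $\eta$. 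The paper's approach is heavier but constructive and reuses the frame $H_j$ directly in the Cauchy--Schwarz estimate for \eqref{approx-ii}, whereas your proof of \eqref{approx-ii} needs the separate observation that the rescaled displacement $p=\lim(a'_j-a_j)/\eps_j$ lies in $T_{a_\infty}A$ (which you correctly note is a rerun of the part~\eqref{approx-i} argument). One small point to make rigorous in a final write-up is that the $C^1$-germ $h$ chosen at $a_\infty$ is defined on a neighborhood containing both $a_j$ and $x_j$ (resp.\ $a'_j$) for $j$ large, and that the remainder in $h(x_j)-h(a_j)=d_{a_j}h(x_j-a_j)+o(|x_j-a_j|)$ is uniform in $j$; both follow from $C^1$-regularity of $h$ via the fundamental theorem of calculus and uniform continuity of $dh$ on compacta.
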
 

The proof of this statement is simpler if  $A \subset \R^n$ is  a $C^1$-submanifold,  since then $A$ is, locally around $a \in K$, the graph of a $(T_a A)^{\perp}$-valued $C^1$-function over $a + T_a A$ by the implicit function theorem. 

\begin{center}
\includegraphics[scale=0.7]{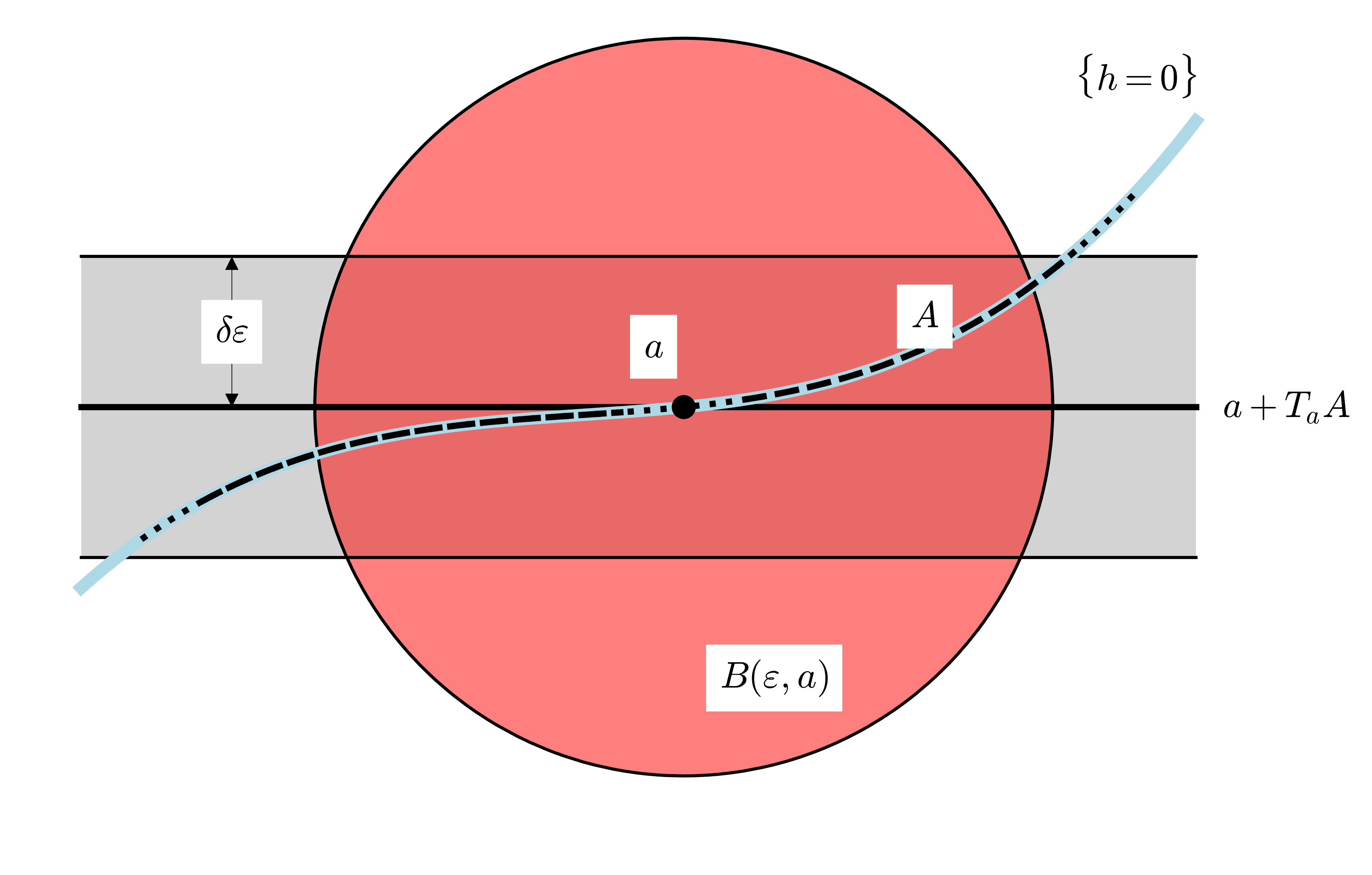}

\Abb{Inclusion $A\cap B(\eps, a) \subset \{r_{a} < \delta \eps \}$}\label{fig:delepsAbsch}
\end{center}

\begin{proof}[Proof of Lemma \ref{lem:approx}] 
Let $a\in K$.
Let  $q := n - \dim T_a A$. 
We say that a $C^1$-map $h:U\to\R^q$ is {\em adapted} to $A$ at $a$ if 
\begin{enumerate}[\myicon]
\item $h$ vanishes on $A \cap U$;
\item $T_aA = \ker(d_ah)$;
\item $d_xh$ has maximal rank for every $x\in U$.
\end{enumerate}
Note that then $T_xA = \ker(d_xh) \subset \R^n$ for all $x \in K \cap U$ because $K$ is a uniform subset of $A$. 
Thus $h$ is adapted to $A$ at each $x\in K\cap U$.

For an adapted $h=(h_1,\ldots,h_q)$ denote by $H_1,\ldots,H_q:U\to \R^n$ the vector fields obtained by the Gram-Schmidt procedure applied to the gradient fields $\nabla h_1,\ldots,\nabla h_q$.
They form a continuous orthonormal frame for the normal bundle of the submanifold $\{h=0\}$.

\uline{\emph{Step~1:}}
We first show that there exists an $\eta_a>0$ (depending on $a$) and a $C^1$-map $h:B(\eta_a,a)\to\R^q$  adapted to $A$ at $a$ with
\begin{gather}
\big\{ h = 0 \big\}  \cap B( \eps', a')   
\subset 
\big\{ y\in B( \eps', a') \mid \dist \big( y , a' + T_{a'} A )\big) <   \delta \eps'/2 \big\} \, ; 
\label{eq:distx} \\
\big|H_j(x)-H_j(y)\big|  < \frac{\delta}{4\cdot\sqrt{q}} \quad\text{ for all }x,y\in B(\eps,a);
\label{eq:dhAbsch}
\end{gather}
for all $0<\eps'\le \eps < \eta_a$ and all $a'\in K\cap B(\eps,a)$.

Indeed, by the definition of generalized tangent space we can find an $h$, adapted to $A$ at $a$, defined on a neighborhood of $\bar B(2\eta_a,a)$.
After possibly decreasing $\eta_a$, the implicit function theorem yields, for every $a'\in B(\eta_a,a)$, a $C^1$-map $g_{a'}: (a'+T_{a'}A)\cap B(\eta_a,a')=(a'+\ker d_{a'}h)\cap B(\eta_a,a')\to \ker (d_{a'}h)^\perp$ such that $\{ h = 0 \}  \cap B( \eta_a, a')$ is contained in the graph of $g_{a'}$.

Let $\tilde U\subset U$ be a compact neighborhood of $a$.
Since $d_{a}h|_{\ker (d_{a}h)^\perp}$ is invertible, the same is true for $d_{y}h|_{\ker (d_{a'}h)^\perp}$ with $y\in\tilde U$ and $a'\in\bar B(\eta_a,a)$ provided $\tilde U$ and $\eta_a$ are sufficiently small.
Decreasing $\eta_a$ further if necessary, we have $x+g_{a'}(x)\in \tilde U$ for $x\in B(2\eta_a,a)\cap (a'+T_{a'}A)$.

Note that $g_{a'}(a')=0$.
Differentiating the equation $h(x+g_{a'}(x))=0$ with respect to $x\in (a'+\ker d_{a'}h)\cap B(\eta_a,a')$ yields 
$$
d_xg_{a'} = -\big(d_{x+g_{a'}(x)}h|_{\ker (d_{a'}h)^\perp}\big)^{-1} \cdot (d_{x+g_{a'}(x)}h|_{\ker (d_{a'}h)}).
$$
Since $(y,a')\mapsto \big(d_{y}h|_{\ker (d_{a'}h)^\perp}\big)^{-1}$ is continuous, it is bounded for $(y,a')\in \tilde U\times \bar B(\eta_a,a)$.
Hence 
$$
\big|d_xg_{a'}\big|\le c\cdot\big|d_{x+g_{a'}(x)}h|_{\ker (d_{a'}h)}\big|  .
$$
For $x=a'$ we have $d_{x+g_{a'}(x)}h|_{\ker (d_{a'}h)}=d_{a'}h|_{\ker (d_{a'}h)}=0$ so that, after decreasing $\eta_a$ once more, we can assume
$$
\big|d_{x+g_{a'}(x)}h|_{\ker (d_{a'}h)}\big| \le \frac{\delta}{3c}
$$
for all $x\in B(\eta_a,a')$ and $a'\in B(\eta_a,a)$.
This implies $\big|d_xg_{a'}\big|\le \frac{\delta}{3}$ and hence $|g_{a'}(x)|\le\frac{\delta}{3}\dist(x,a')$.

Now equation~\eqref{eq:distx} follows:
let $y\in \big\{ h = 0 \big\}  \cap B( \eps', a')$ and write $y=x+g_{a'}(x)$.
Then
$$
\dist( y , a' + T_{a'} A )
\le
\dist(x+g_{a'}(x),x)
=
|g_{a'}(x)|
\le 
\frac{\delta}{3}\cdot\eps'
<
\frac{\delta\eps'}{2} .
$$
Moreover, the vector fields $H_j$ are continuous and hence uniformly continuous on compact sets.
Thus, after possibly decreasing $\eta_a$ one more time, we also get \eqref{eq:dhAbsch}.

\uline{\emph{Step~2:}}
For each $a\in K$ let $\eta_a\in (0,\infty]$ be maximal amongst all constants for which \eqref{eq:distx} and \eqref{eq:dhAbsch} hold.
Let $\lambda\in (0,\eta_a)$ and let $a_1\in K\cap B(\lambda,a)$.
If $0 < \eps'\le\eps<\eta_a-\lambda$ then $B(\eps,a_1)\subset B(\eps+\lambda,a)$ and thus \eqref{eq:distx} holds for all $a'\in K \cap B(\eps,a_1)$.
Similarly, \eqref{eq:dhAbsch} remains valid for all $x,y\in B(\eps,a_1)$.

This shows that $\eta_{a_1}\ge \eta_a -\lambda$ for all $a_1\in B(\lambda,a)$.
Therefore the function $a\mapsto \eta_a$ is lower semicontinuous and hence attains its minimum $\eta>0$ on the compact set $K$.

We now choose $0< \eps'=\eps<\eta$ and $a=a'$ in \eqref{eq:distx} and get
\begin{equation*} 
A \cap B( \eps, a) 
\subset 
\big\{ h = 0 \big\}  \cap B( \eps , a) 
\subset 
\big\{ r_{a} < \delta\eps/2 \big\} 
\subset 
\big\{ r_{a} < \delta\eps \big\}  ,
\end{equation*}
see Figure~\ref{fig:delepsAbsch}.
This shows \eqref{approx-i}.

\uline{\emph{Step~3:}}
Now let ${a'} \in A\cap  B(\eps, a)\subset \{ h = 0 \} \cap B(\eps, a)$ and let $a_0 \in a + \ker d_a h$ denote the orthogonal projection of ${a'}$.
Then  $a_0 \in B(\eps, a)$, and $|{a'} - a_0| < \delta\eps/2$ by \eqref{eq:distx}. 
The triangle and Cauchy-Schwarz inequalities together with \eqref{eq:dhAbsch} imply for $x \in \R^n$:
\begin{align*} 
\big| &\dist(x, {a'} + \ker d_{a'} h) - \dist(x, a + \ker d_a h) \big|  \\
&= 
\big| \dist(x, {a'} + \ker d_{a'} h) - \dist(x, a_0 + \ker d_a h) \big| \\
&\le
\big| \dist(x, {a'} + \ker d_{a'} h) - \dist(x, a_0 + \ker d_{a'} h) \big| + \big| \dist(x, a_0 + \ker d_{a'} h) - \dist(x, a_0 + \ker d_a h) \big| \\
&< 
\delta\eps/2 + \big| \dist(x, a_0 + \ker d_{a'} h) - \dist(x, a_0 + \ker d_a h) \big| \\
&= 
\big| \dist ( x - a_0, \ker d_{a'} h) - \dist( x - a_0, \ker d_a h) \big| + \delta \eps/2 \\
&= 
\bigg| \Big(\sum_{j=1}^q \langle x - a_0, H_j({a'})\rangle^2\Big)^{1/2} - \Big(\sum_{j=1}^q \langle x - a_0, H_j(a)\rangle^2\Big)^{1/2} \bigg| + \delta \eps/2 \\
& \leq \Big( \sum_{j=1}^q \bigl\langle x - a_0 , H_j({a'}) - H_j(a) \bigr\rangle^2 \Big)^{1/2} + \delta \eps/2 \\  
&\leq 
| x - a_0 | \cdot \Big(\sum_{j = 1}^q | H_j({a'}) - H_j(a) |^2\Big)^{1/2} + \delta \eps/2 \\
&\leq 
|x - a_0| \cdot \delta/4  + \delta \eps/2 \, . 
\end{align*} 
If $x \in B(\eps, a)$ then $|x - a_0| < 2 \eps$ and hence 
\[
    \big| r_{a'}(x) - r_{a}(x) \big| <  2 \eps \cdot \delta/4 + \delta \eps/2 =  \delta \eps \, . 
\]
This proves part \eqref{approx-ii}. 
\end{proof}

The following corollary, which we formulate with constants adapted to our later needs, combines the previous estimate with a covering multiplicity bound.

\begin{cor} \label{cor:covering} 
Let $K \subset A$ be a compact and uniform subset.
Let $\delta > 0$. 

Then there is an $\eta>0$ such that for each $\eps \in (0,\eta)$ there exists a  finite family $(a_i)_{i \in I}$ of points in $K$ with the following properties: 
\begin{enumerate}[(i)] 
  \item  \label{covering-i}
  $A \cap B(\eps, a_i) \subset \{ r_{a_i}   < \delta^2 \eps \}$ for $i \in I$; 
  \item  \label{covering-ii}
  $|r_{a_i}  - r_{a_j} |_{B(2\eps,a_i)} <  \delta^2 \eps $ for  $i, j  \in I$ with $| a_i - a_j |  \leq 4 \eps $; 
  \item 
  $K\subset \bigcup_{i\in I}B(\eps, a_i)$;
  \item   
  The multiplicity of the family  $\big(B(2 \eps, a_i) \big)_{i \in I}$  is bounded by $10^n$, i.e.\ each point in $\R^n$ is contained in at most $10^n$ different balls $B(2 \eps, a_i)$. 
\end{enumerate} 
\end{cor}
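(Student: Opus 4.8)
The strategy is to first apply Lemma~\ref{lem:approx} with a suitably shrunk tolerance, and then overlay a Vitali-type covering argument to control the multiplicity. Concretely, I would invoke Lemma~\ref{lem:approx} with $\delta$ replaced by $\delta^2/2$ (so that the conclusions of the lemma give $A\cap B(\eps,a)\subset\{r_a<\delta^2\eps/2\}$ and $|r_{a'}-r_a|_{B(\eps,a)}<\delta^2\eps/2$ for $a'\in K\cap B(\eps,a)$), and also with a version phrased on balls of radius up to $3\eps$ rather than $\eps$ — since the lemma's constant $\eta$ is uniform, replacing $\eps$ by $3\eps$ merely shrinks the admissible range, so I may simply take $\eta$ to be one third of the constant produced by the lemma at tolerance $\delta^2/6$. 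This ensures that for any $a\in K$ and any $a'\in K\cap B(4\eps,a)\subset B(4\eps,a)$ we still have the estimates on the larger ball $B(2\eps,a)$, since $B(2\eps,a)\subset B(6\eps,a')$ and $6\eps<\eta_{\mathrm{Lem}}$.

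Next, fix $\eps\in(0,\eta)$ and choose the family $(a_i)_{i\in I}$ by a maximal $\eps$-separated subset of $K$: let $(a_i)_{i\in I}$ be a maximal collection of points of $K$ that are pairwise at distance $\ge\eps$. Since $K$ is compact this family is finite, and by maximality every point of $K$ lies within distance $\eps$ of some $a_i$, giving property~(iii), i.e.\ $K\subset\bigcup_{i}B(\eps,a_i)$. Properties~(i) and~(ii) are then immediate from the choice of $\eta$ above: (i) is the first conclusion of Lemma~\ref{lem:approx} applied at each $a_i$, and for (ii), if $|a_i-a_j|\le 4\eps$ then $a_j\in K\cap B(4\eps,a_i)$ and the triangle-inequality reasoning of the previous paragraph yields $|r_{a_i}-r_{a_j}|_{B(2\eps,a_i)}<\delta^2\eps$.

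The remaining point, property~(iv), is the main obstacle, though it is a standard packing estimate. Suppose a point $x\in\R^n$ lies in $N$ of the balls $B(2\eps,a_i)$, say for $i$ in a subset $I'\subset I$ of size $N$. Then all the corresponding centres $a_i$, $i\in I'$, lie in $B(2\eps,x)$, while by $\eps$-separation the open balls $B(\eps/2,a_i)$ are pairwise disjoint and all contained in $B(2\eps+\eps/2,x)=B(5\eps/2,x)$. Comparing Euclidean volumes, $N\cdot\vol\big(B(\eps/2,x)\big)\le\vol\big(B(5\eps/2,x)\big)$, hence $N\le 5^n<10^n$. This proves the multiplicity bound and completes the proof.

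\begin{proof}
Apply Lemma~\ref{lem:approx} to $A$, $K$, and the tolerance $\delta^2/6$ in place of $\delta$; let $\eta_0>0$ be the resulting constant, and set $\eta:=\eta_0/3$. Fix $\eps\in(0,\eta)$ and let $(a_i)_{i\in I}$ be a maximal $\eps$-separated subset of $K$, i.e.\ a maximal collection of points of $K$ with $|a_i-a_j|\ge\eps$ for $i\ne j$. Since $K$ is compact, $I$ is finite, and by maximality of the family every point of $K$ lies within distance $\eps$ of some $a_i$; this gives property~(iii).

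Because $3\eps<\eta_0$, Lemma~\ref{lem:approx}\eqref{approx-i} applied at each $a_i$ (with radius $\eps$) yields $A\cap B(\eps,a_i)\subset\{r_{a_i}<\delta^2\eps/6\}\subset\{r_{a_i}<\delta^2\eps\}$, which is property~(i).

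For property~(ii), suppose $i,j\in I$ with $|a_i-a_j|\le 4\eps$, so that $a_j\in K\cap B(4\eps,a_i)$. For any $x\in B(2\eps,a_i)$ we have $x\in B(6\eps,a_j)$ and $a_i\in B(6\eps,a_j)$, and since $6\eps<2\eta_0$ — more precisely $6\eps<\eta_0$ after, if necessary, shrinking $\eta$ further to $\eta_0/6$, which we do — Lemma~\ref{lem:approx}\eqref{approx-ii} applied at $a_j$ with radius $6\eps$ gives $|r_{a_i}(x)-r_{a_j}(x)|<\delta^2(6\eps)/6=\delta^2\eps$. Taking the supremum over $x\in B(2\eps,a_i)$ yields property~(ii). (Thus the correct choice is $\eta:=\eta_0/6$.)

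Finally we bound the multiplicity. Fix $x\in\R^n$ and let $I':=\{i\in I\mid x\in B(2\eps,a_i)\}$. For $i\in I'$ the centre $a_i$ lies in $B(2\eps,x)$, and the balls $\big(B(\eps/2,a_i)\big)_{i\in I'}$ are pairwise disjoint by $\eps$-separation and all contained in $B(5\eps/2,x)$. Comparing volumes in $\R^n$,
\[
   |I'|\cdot\vol\big(B(\eps/2,x)\big)\le\vol\big(B(5\eps/2,x)\big),
\]
hence $|I'|\le 5^n<10^n$. This is property~(iv) and completes the proof.
\end{proof}
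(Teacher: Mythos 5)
Your proof is correct and follows essentially the same route as the paper's: a maximal $\eps$-separated net in $K$, a Euclidean volume-packing argument for the multiplicity bound, and an invocation of Lemma~\ref{lem:approx} with a suitably shrunk tolerance (the paper writes $\delta^2/2$, you use $\delta^2/6$, which is the more careful bookkeeping needed to get the $\delta^2\eps$ estimate on the larger ball in property~(ii)). The only cosmetic issue is the inline self-correction from $\eta_0/3$ to $\eta_0/6$, which you should simply state as $\eta:=\eta_0/6$ at the outset; your final multiplicity bound $5^n$ is in fact sharper than the stated $10^n$.
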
 

\begin{proof} For each $\eps >0 $ we find a maximal family $(a_i)_{i \in I}$ of points in $K$ such that the balls $B( \eps/2, a_i)\subset \R^n$ are pairwise disjoint. 
Then $\big(B( \eps, a_i)\big)_{i \in I} $ covers $K$. 
Moreover,  the elementary volume comparison $\frac{\vol( B(5 \eps, 0))}{\vol( B(\eps/2, 0))} = 10^n$ implies that for each $i \in I$ the ball $B(4 \eps, a_i)$ contains at most $10^n$ points $a_j$,   since otherwise the balls $B(\eps/2, a_j) \subset B(5 \eps, a_i)$ cannot be pairwise disjoint. 
This implies that the multiplicity of $\big(B( 2 \eps, a_i)\big)_{i \in I}$ is bounded by $10^n$. 

By Lemma~\ref{lem:approx} (applied with $\delta^2/2$ instead of $\delta$), assertions \eqref{covering-i} and \eqref{covering-ii} hold as well for sufficiently small $\eta$. 
\end{proof} 

We will now use generalized tangent spaces in order to construct efficient cutoff functions. 
The next lemma is proved in Appendix~\ref{sec-AnhangB}.

\begin{lem}\label{lem:DefFunction}
For $0 < \delta < \tfrac14$ and $0 < \eps < 1$  there are $C^\infty$-functions $\tau_{\delta, \eps}:\R\to \R$ with the following properties:
\begin{enumerate}[(i)]
\item\label{eq:tauepsi}
$\tau_{\delta,\eps}(r)  =1$ for $r\le \delta \eps$;
\item\label{eq:tauepsii}
$\tau_{\delta , \eps}(r)  =0$ for $r\ge\eps $;
\item
$0\le \tau_{\delta, \eps} \le 1$ everywhere;
\item\label{eq:tauepsiv}
for every $k\in\N$ there is a constant $C_k>0$ such that $\big|\tau_{\delta, \eps}^{(k)}(r)\big| \le C_k \cdot r^{-k} \cdot | \ln \delta|^{-1}$ for all $r > 0$.
\end{enumerate}
\end{lem}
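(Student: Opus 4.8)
The plan is to build $\tau_{\delta,\eps}$ by rescaling a single profile. First I would construct, for each $k$, a fixed $C^\infty$ cutoff in one auxiliary variable and then substitute a logarithmic change of coordinates so that the $C^k$-bounds pick up the factor $|\ln\delta|^{-1}$ automatically. Concretely, set $s = \ln r$, so that $r \in [\delta\eps,\eps]$ corresponds to $s \in [\ln(\delta\eps),\ln\eps]$, an interval of length $|\ln\delta|$. On the $s$-line, choose once and for all a smooth function $\chi:\R\to[0,1]$ with $\chi \equiv 1$ on $(-\infty,0]$ and $\chi \equiv 0$ on $[1,\infty)$, and put
\[
  \tau_{\delta,\eps}(r) := \chi\!\left( \frac{\ln r - \ln(\delta\eps)}{|\ln\delta|} \right) = \chi\!\left( \frac{\ln(r/\eps)}{|\ln\delta|} + 1 \right).
\]
Properties \eqref{eq:tauepsi}, \eqref{eq:tauepsii} and $0\le\tau_{\delta,\eps}\le 1$ are then immediate: if $r\le\delta\eps$ the argument of $\chi$ is $\le 0$, and if $r\ge\eps$ it is $\ge 1$. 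Note $\tau_{\delta,\eps}$ is defined and smooth for $r>0$; it extends by the constant $1$ to $r\le\delta\eps$, so we may regard it as a smooth function on all of $\R$ (after the trivial modification of declaring it $\equiv 1$ on $(-\infty,0]$, which is consistent since $\chi\equiv 1$ there).

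Next I would estimate the derivatives. Write $\phi(r) := \frac{\ln(r/\eps)}{|\ln\delta|} + 1$, so $\tau_{\delta,\eps} = \chi\circ\phi$ and $\phi'(r) = \frac{1}{|\ln\delta|}\cdot\frac1r$, while $\phi^{(j)}(r)$ is a constant multiple of $\frac{1}{|\ln\delta|}\cdot r^{-j}$ for every $j\ge 1$. By the Faà di Bruno formula,
\[
  \tau_{\delta,\eps}^{(k)}(r) = \sum \frac{k!}{m_1!\cdots m_k!}\,\chi^{(m_1+\cdots+m_k)}(\phi(r)) \prod_{j=1}^{k}\left(\frac{\phi^{(j)}(r)}{j!}\right)^{m_j},
\]
the sum over $m_1 + 2m_2 + \cdots + km_k = k$. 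Each factor $\phi^{(j)}(r)$ contributes $r^{-j}$ and a factor $|\ln\delta|^{-1}$, so the monomial $\prod_j (\phi^{(j)})^{m_j}$ has size $\lesssim r^{-k}\,|\ln\delta|^{-(m_1+\cdots+m_k)}$. Since $\chi$ is fixed, $|\chi^{(m)}|$ is bounded by a constant depending only on $k$. The key point is that in every term $m_1+\cdots+m_k\ge 1$ (because $k\ge 1$ forces at least one $m_j>0$), and for $0<\delta<\tfrac14$ we have $|\ln\delta| > \ln 4 > 1$, so $|\ln\delta|^{-(m_1+\cdots+m_k)} \le |\ln\delta|^{-1}$. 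Collecting the finitely many combinatorial constants into a single $C_k$ gives $|\tau_{\delta,\eps}^{(k)}(r)| \le C_k\, r^{-k}\,|\ln\delta|^{-1}$, which is \eqref{eq:tauepsiv}.

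The only subtlety — and what I expect to be the one place requiring care rather than routine — is making sure $C_k$ is genuinely independent of $\eps$ and $\delta$: this works precisely because the $\eps$-dependence disappears (only $\ln(r/\eps)$ enters, and differentiating $\ln(r/\eps)$ in $r$ kills $\eps$) and the $\delta$-dependence enters only through the harmless prefactor $|\ln\delta|^{-1}$, together with the observation that higher powers $|\ln\delta|^{-m}$, $m\ge 2$, are even smaller under the hypothesis $\delta<\tfrac14$ and so can be absorbed. One should also double-check the edge behaviour at $r=\delta\eps$ and $r=\eps$: there $\tau_{\delta,\eps}$ matches the constants $1$ and $0$ smoothly because $\chi$ is locally constant near $0$ and near $1$, so no gluing issue arises and $\tau_{\delta,\eps}\in C^\infty(\R)$. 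Everything else is bookkeeping with Faà di Bruno.
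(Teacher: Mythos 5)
Your proof is correct, and it takes a genuinely cleaner route than the paper's. The paper starts from the explicit piecewise profile
\[
\hat\tau_{\delta,\eps}(r)=
\begin{cases}
1, & r\le\delta\eps,\\
\ln(r/\eps)/\ln\delta, & \delta\eps\le r\le\eps,\\
0, & r\ge\eps,
\end{cases}
\]
(which already satisfies the desired $k=1$ bound with $C_1=1$ away from the corners) and then performs two separate smoothings, one at $r=\eps$ via a capped logarithm $\widehat\ln$ and one at $r=\delta\eps$ via a function $\chi$, verifying the derivative bound in each of the two transition zones by hand. You instead write the whole thing at once as $\tau_{\delta,\eps}=\chi\circ\phi$ with $\phi(r)=\ln(r/\eps)/|\ln\delta|+1$ and a single fixed cutoff $\chi$, and let the Fa\`a di Bruno formula do the bookkeeping: from $\phi^{(j)}(r)=(-1)^{j-1}(j-1)!\,|\ln\delta|^{-1}r^{-j}$ for $j\ge1$, each monomial $\prod_j(\phi^{(j)})^{m_j}$ automatically carries $r^{-k}$ and at least one factor of $|\ln\delta|^{-1}$, and since $|\ln\delta|>\ln 4>1$ for $\delta<\tfrac14$, the higher powers are absorbed. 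Outside the transition region $\tau_{\delta,\eps}$ is constant so the bound is trivial, and $\chi^{(m)}(\phi(r))$ for $\phi(r)\in[0,1]$ is controlled by $\chi$ alone, so $C_k$ indeed depends only on $k$ and the fixed $\chi$. Your argument is shorter, avoids duplicating the corner analysis, and makes the origin of the factor $|\ln\delta|^{-1}$ transparent (it is the derivative of the logarithmic reparametrization); the paper's approach is more elementary in that it avoids invoking Fa\`a di Bruno and exhibits the unsmoothed profile explicitly, which some readers may find more concrete. Both are fully rigorous.
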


In the following we use standard coordinates $(x^1, \ldots, x^n)$ on $\R^n$ and the multiindex notation 
$$
D^{\alpha}
=
\frac{\partial^{|\alpha|}}{(\partial x^1)^{\alpha_1}\cdots(\partial x^n)^{\alpha_n}} .
$$
Let $X\subset\R^n$ be an affine subspace.
For $x \in \R^n$ and $\lambda>0$ we set 
\[
 \Omega_{\lambda, X}(x) := \max \big\{ \lambda \, , \, \dist(x,X)\big\} \in [\lambda, \infty )  \, . 
\]

\begin{cor} \label{cor:der_distance}
Let $X\subset\R^n$ be an affine subspace and let $r(x)=\dist(x,X)$.
For every multiindex $\alpha$ with $|\alpha|\ge1$ there is a constant $C_\alpha>0$ (independent of $X$) such that
\begin{enumerate}[(i)]
\item\label{der_distance_i}
$ | D^{\alpha} ( \tau_{\delta, \eps} \circ r) | \leq C_{\alpha}  \cdot \Omega_{\delta\eps, X}^{- |\alpha|} \cdot | \ln \delta|^{-1} \, ;$
\item\label{der_distance_ii}
$ | D^{\alpha} (\tau_{\delta, \eps} \circ ( r - \eps'))| \leq C_{\alpha} \cdot (\delta \eps)^{-|\alpha|} \cdot |\ln \delta|^{-1}$;
\end{enumerate}
for all $0 < \delta < \tfrac14$, for all $0 < \eps < 1$, and for all $0\le \eps' \le \eps$.
\end{cor}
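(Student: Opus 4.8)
The plan is to reduce the corollary to the one-variable estimate in Lemma~\ref{lem:DefFunction} by controlling the derivatives of the composition with the distance function $r(x) = \dist(x,X)$. First I would choose linear coordinates in which $X = \R^m \times \{0\} \subset \R^n$, so that $r(x) = |x''|$ where $x = (x', x'')$ with $x'' \in \R^{n-m}$; since the claimed constants $C_\alpha$ are required to be independent of $X$, and the Euclidean structure is invariant under the orthogonal change of coordinates that achieves this normalization, this is harmless. In these coordinates $r$ is smooth away from $X$, and the standard scaling estimate for derivatives of the Euclidean norm gives $|D^\beta r(x)| \le c_\beta \cdot r(x)^{1 - |\beta|}$ for $x \notin X$ and every multiindex $\beta$ with $|\beta| \ge 1$, with $c_\beta$ a dimensional constant. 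This is the elementary input: each derivative of $|x''|$ costs one power of $r^{-1}$.

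Next I would apply the Faà di Bruno formula to $\tau_{\delta,\eps} \circ r$ (and likewise to $\tau_{\delta,\eps} \circ (r - \eps')$, noting $r - \eps'$ has the same derivatives of order $\ge 1$ as $r$). A term in $D^\alpha(\tau_{\delta,\eps} \circ r)$ is a sum over partitions of $\alpha$ of products $\tau_{\delta,\eps}^{(j)}(r) \cdot \prod_{i=1}^{j} D^{\beta_i} r$ with $\sum \beta_i = \alpha$, $|\beta_i| \ge 1$, and $j \le |\alpha|$. Using the bound on $D^{\beta_i} r$, each such product is bounded by $|\tau_{\delta,\eps}^{(j)}(r)| \cdot c_{\alpha} \cdot r^{\,j - |\alpha|}$. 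For (i): on the set where $r \le \delta\eps$ we have $\tau_{\delta,\eps} \equiv 1$ by \eqref{eq:tauepsi}, so all derivatives of order $\ge 1$ vanish there and the estimate is trivial (with $\Omega_{\delta\eps,X} = \delta\eps$ and the $|\ln\delta|^{-1}$ factor harmlessly inserted after enlarging $C_\alpha$, or simply noting $0 \le C_\alpha (\delta\eps)^{-|\alpha|}|\ln\delta|^{-1}$); on the set where $r > \delta\eps$ we have $\Omega_{\delta\eps,X}(x) = r(x)$, and property \eqref{eq:tauepsiv} gives $|\tau_{\delta,\eps}^{(j)}(r)| \le C_j r^{-j} |\ln\delta|^{-1}$ for $j \ge 1$, so the product is bounded by $C_j c_\alpha \, r^{-j} \cdot r^{\,j-|\alpha|} \cdot |\ln\delta|^{-1} = C_j c_\alpha \, r^{-|\alpha|} |\ln\delta|^{-1} = C_j c_\alpha \, \Omega_{\delta\eps,X}^{-|\alpha|} |\ln\delta|^{-1}$; for the single term $j = 0$ (only when $\alpha = 0$) we use $0 \le \tau_{\delta,\eps} \le 1$. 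Summing over the finitely many partition terms and absorbing everything into one constant $C_\alpha$ proves (i).

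For (ii) the same computation applies, but now I would bound $r^{\,j - |\alpha|}$ more crudely. On $\supp(\tau_{\delta,\eps}^{(j)}(r - \eps'))$ we have $\delta\eps \le r - \eps' \le \eps$ by \eqref{eq:tauepsi} and \eqref{eq:tauepsii} (derivatives of order $\ge 1$ of $\tau_{\delta,\eps}$ are supported where $\delta\eps \le \,\cdot\, \le \eps$), hence $r \ge \eps' + \delta\eps \ge \delta\eps$, and also $r = (r - \eps') + \eps' \le \eps + \eps \le 2$ since $\eps' \le \eps < 1$; so $|\tau_{\delta,\eps}^{(j)}(r - \eps')| \le C_j (r - \eps')^{-j}|\ln\delta|^{-1} \le C_j (\delta\eps)^{-j}|\ln\delta|^{-1}$ and $r^{\,j-|\alpha|} \le (\delta\eps)^{-(|\alpha| - j)}$ when $|\alpha| \ge j$ (using $r \ge \delta\eps$), giving a product bound $C_j c_\alpha (\delta\eps)^{-|\alpha|}|\ln\delta|^{-1}$; when $j > |\alpha|$ does not occur, and when $j < |\alpha|$ one also uses $r \le 2$ to control the positive power $r^{j - |\alpha|}$ is actually negative so only the lower bound $r \ge \delta\eps$ matters. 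Again summing the partition terms yields (ii). The only mildly delicate point — the main obstacle, such as it is — is bookkeeping the support conditions of $\tau_{\delta,\eps}^{(j)}$ correctly so that the degenerate region $r \le \delta\eps$ (where $\tau$ is locally constant) and the far region $r \ge \eps$ (where $\tau$ vanishes) are both excluded before invoking the scaling bound on $D^\beta r$; once that is in place the estimates are purely mechanical.
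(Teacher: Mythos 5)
Your proof is correct and follows essentially the same route as the paper's: the well-known scaling estimate $|D^\beta r|\le c_\beta\, r^{1-|\beta|}$, the Fa\`a di Bruno decomposition of $D^\alpha(\tau_{\delta,\eps}\circ r)$ into terms $\tau_{\delta,\eps}^{(j)}(r)\prod D^{\beta_i}r$, the split at $r=\delta\eps$ for part~(i), and the observation $r\geq\eps'+\delta\eps$ on the support for part~(ii). The only cosmetic difference is that the paper bounds each factor $r^{1-|\beta_i|}\le(\delta\eps)^{1-|\beta_i|}$ separately rather than the grouped power $r^{j-|\alpha|}$, and it describes the decomposition via induction on $|\alpha|$ rather than by invoking Fa\`a di Bruno by name.
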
 

\begin{proof} 
We show \eqref{der_distance_i}.
On $\{r<\delta\eps\}$ the function $\tau_{\delta, \eps} \circ r$ is constant so that the estimate is trivial.
On $\{r\ge\delta\eps\}$ we have $\Omega_{\delta\eps, X} = r$.
The distance function satisfies the well-known estimate
\begin{equation}
     | D^{\alpha} r | \leq C'_{\alpha} \cdot r^{1 - |\alpha| } 
\label{eq:Dara}
\end{equation}
outside of $X$.

Induction on $|\alpha|$ shows that $D^{\alpha}(\tau_{\delta, \eps} \circ r)$ is a linear combination of terms of the form 
$$
(\tau_{\delta, \eps}^{(k)}\circ r)\cdot D^{\beta^{(1)}}r\cdots D^{\beta^{(k)}}r
$$ 
where $k\ge1$, $|\beta^{(j)}|\ge1$ and $|\beta^{(1)}|+\ldots+|\beta^{(k)}|=|\alpha|$.
Lemma~\ref{lem:DefFunction}~\eqref{eq:tauepsiv} together with \eqref{eq:Dara} proves \eqref{der_distance_i}.

To show \eqref{der_distance_ii} we put $\tau_1 := \tau_{\delta, \eps} \circ (r-\eps')$.
For $r \leq \eps' + \delta\eps$ we have $\tau_1 = 1$, hence we may assume
\begin{equation} \label{relevant_domain} 
      r \geq \eps' + \delta \eps \, . 
\end{equation} 
This time $D^{\alpha} \tau_1$ is a linear combination of terms of the form 
\[
     \big(\tau_{\delta, \eps}^{(k)} \circ (r - \eps') \big) \cdot D^{\beta^{(1)}} r \cdots D^{\beta^{(k)}} r
\]
where $k\ge1$, $|\beta^{(j)} | \geq 1$ and $|\beta^{(1)}| + \ldots + |\beta^{(k)}| = |\alpha|$. 
The absolute value of each such term is estimated as follows, using Lemma~\ref{lem:DefFunction} and \eqref{eq:Dara}:
\[
\big|  \big(\tau_{\delta, \eps}^{(k)} \circ (r - \eps') \big) \cdot D^{\beta^{(1)}} r \cdots D^{\beta^{(k)}} r \big| 
\leq 
C_k \cdot | r -\eps'|^{-k} \cdot | \ln \delta|^{-1} \cdot C'_{\beta^{(1)}} r^{1 - |\beta^{(1)}|} \cdots C'_{\beta^{(k)}} r^{1 - |\beta^{(k)}|} \, . 
\]
By assumption~\eqref{relevant_domain} we have $|r - \eps'|^{- k} \leq (\delta \eps)^{-k}$ and $r^{1 - |\beta^{(j)}|} \leq (\delta \eps)^{1 - |\beta^{(j)}|}$ for $1 \leq j \leq k$ because all exponents are nonpositive. 
This concludes the proof.
\end{proof} 

For $A\subset\R^n$ and $a\in A$ we write $\Omega_{\lambda, a} := \Omega_{\lambda, a+T_aA}$.

\begin{lem} \label{lem:cutoff} 
Let $K \subset A$ be a compact and uniform subset.
Let $U \subset \R^n$ be an open neighborhood of $K$. 
Let $k\in\N$. 
Then there exists a constant $C_k>0$ such that the following holds:

For each $0 < \delta < \tfrac14$ there exists an $\eta>0$ such that for each $\eps\in (0,\eta)$ there exists a finite family $(a_i)_{i \in I}$ of points in $K$ and a smooth function $\rho : \R^n  \to [0,1]$ with the following properties: 
\begin{enumerate}[(i)] 
\item \label{cutoff-1}
${\rm supp} ( \rho) \subset\bigcup_{i\in I}B(2\eps, a_i) \subset U$;  
\item  \label{cutoff-2}
$\rho \equiv 1$ on some open neighborhood of $K$;
\item  \label{cutoff-3}
$K\subset \bigcup_{i\in I}B(\eps, a_i)$;
\item \label{cutoff-4}
For $ 0 < | \alpha | \leq k$, $ i \in I$, and $x \in B(2 \eps, a_i)$ we have 
  \[
     |D^{\alpha} \rho (x) | \leq C_k \cdot \Omega_{\delta^2\eps, a_i}(x)^{- |\alpha|} \cdot  |\ln \delta|^{-1} \, . 
  \]
\end{enumerate} 
\end{lem}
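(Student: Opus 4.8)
The plan is to build $\rho$ as a product of two families of cutoffs, one controlling the ``normal'' directions and one handling the transition between overlapping balls. First I would apply Corollary~\ref{cor:covering} (with the given $\delta$) to obtain $\eta>0$ and, for each $\eps\in(0,\eta)$, a finite family $(a_i)_{i\in I}$ of points in $K$ satisfying \eqref{covering-i}--\eqref{covering-ii}, the covering property $K\subset\bigcup_i B(\eps,a_i)$, and the multiplicity bound $10^n$ for $(B(2\eps,a_i))_{i\in I}$. Shrinking $\eta$ further I may assume $\bigcup_i B(2\eps,a_i)\subset U$, which gives \eqref{cutoff-1} and \eqref{cutoff-3}. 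The guiding idea is that near $a_i$ the set $A$ lies in the slab $\{r_{a_i}<\delta^2\eps\}$, so a function that equals $1$ on that slab and is supported in $B(2\eps,a_i)$ will be $\equiv 1$ near $K\cap B(\eps,a_i)$; patching these together over $i$ gives $\equiv 1$ near all of $K$.

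Next I would set, for each $i$,
\[
\rho_i := (\tau_{\delta,\eps}\circ\sigma_i)\cdot(\tau_{\delta,\eps}\circ(r_{a_i}-\eps')),
\]
where $\sigma_i(x)=\dist(x,a_i)$ is the distance to the point $a_i$ (so $\tau_{\delta,\eps}\circ\sigma_i$ is a bump supported in $B(\eps,a_i)$ — actually I should scale so the support is in $B(2\eps,a_i)$, e.g.\ use $\tau_{\delta,\eps}(\sigma_i/2)$ or run $\tau$ with parameter $2\eps$), and the second factor is a cutoff in the normal direction that equals $1$ on $\{r_{a_i}\le\eps'+\delta\eps\}$ for a suitable $\eps'$ chosen so that $\eps'+\delta\eps$ slightly exceeds $\delta^2\eps$ — concretely $\eps'$ of order $\delta^2\eps$ works since $\delta<\tfrac14$. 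By \eqref{covering-i} we then have $\rho_i\equiv 1$ on a neighborhood of $K\cap B(\eps,a_i)$. Corollary~\ref{cor:der_distance}, parts \eqref{der_distance_i} and \eqref{der_distance_ii}, give the derivative bounds $|D^\alpha(\tau_{\delta,\eps}\circ\sigma_i)|\lesssim(\delta\eps)^{-|\alpha|}|\ln\delta|^{-1}$ and $|D^\alpha(\tau_{\delta,\eps}\circ(r_{a_i}-\eps'))|\lesssim(\delta\eps)^{-|\alpha|}|\ln\delta|^{-1}$; but for the sharp bound in \eqref{cutoff-4} I need the stronger estimate $|D^\alpha(\tau_{\delta,\eps}\circ(r_{a_i}-\eps'))|\lesssim\Omega_{\delta^2\eps,a_i}(x)^{-|\alpha|}|\ln\delta|^{-1}$ away from where it is locally constant, which on $\{r_{a_i}\ge\delta^2\eps\}$ again follows from Corollary~\ref{cor:der_distance}\eqref{der_distance_i} applied to the shifted distance, using $\Omega_{\delta^2\eps,a_i}(x)=r_{a_i}(x)\ge\delta^2\eps$ there. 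Finally I would assemble
\[
\rho := 1-\prod_{i\in I}(1-\rho_i),
\]
which is $\equiv 1$ wherever some $\rho_i=1$, hence on a neighborhood of $K$ (giving \eqref{cutoff-2}), takes values in $[0,1]$, and is supported in $\bigcup_i B(2\eps,a_i)$.

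The main obstacle, and the step requiring real care, is the derivative estimate \eqref{cutoff-4}: differentiating the product $1-\prod_i(1-\rho_i)$ produces sums over subsets $S\subset I$ of products $\prod_{i\in S}D^{\alpha^{(i)}}\rho_i$ with $\sum|\alpha^{(i)}|=|\alpha|$, and a priori the number of terms could blow up with $|I|$. Here the multiplicity bound enters: at a fixed point $x\in B(2\eps,a_i)$ at most $10^n$ of the factors $1-\rho_j$ differ from $1$ (since $x\in B(2\eps,a_j)$ for at most $10^n$ indices $j$), so effectively the product is over a set of size $\le 10^n$, yielding a constant $C_k$ depending only on $k$ and $n$. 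The remaining subtlety is to show that each factor $D^{\alpha^{(j)}}\rho_j$ at the point $x$ is bounded by $\Omega_{\delta^2\eps,a_i}(x)^{-|\alpha^{(j)}|}|\ln\delta|^{-1}$ with the \emph{same} footpoint $a_i$: this is where \eqref{covering-ii} is used, since for $|a_i-a_j|\le 4\eps$ we have $|r_{a_i}-r_{a_j}|<\delta^2\eps$ on $B(2\eps,a_i)$, so $\Omega_{\delta^2\eps,a_j}(x)$ and $\Omega_{\delta^2\eps,a_i}(x)$ are comparable up to a factor absorbed into $C_k$ (one must check the two cases $r_{a_i}(x)\ge\delta^2\eps$ and $r_{a_i}(x)<\delta^2\eps$ separately, the latter being where $\Omega=\delta^2\eps$ is the relevant scale). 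Combining: $|D^\alpha\rho(x)|\le C_k\,\Omega_{\delta^2\eps,a_i}(x)^{-|\alpha|}|\ln\delta|^{-1}$, with an extra factor $|\ln\delta|^{-1}$ to spare since at least one $\alpha^{(j)}$ is nonzero — though one only needs a single power of $|\ln\delta|^{-1}$, so this is harmless.
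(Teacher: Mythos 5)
Your construction has a genuine gap at the step \textquotedblleft By \eqref{covering-i} we then have $\rho_i\equiv 1$ on a neighborhood of $K\cap B(\eps,a_i)$.\textquotedblright \ The first factor $\tau_{\delta,\eps}\circ\sigma_i$ (or its rescaled variant $\tau_{\delta,\eps}(\sigma_i/2)$) equals $1$ only on the tiny ball $\{\sigma_i\le\delta\eps\}$ (resp.\ $\{\sigma_i\le 2\delta\eps\}$), because the plateau of $\tau_{\delta,\eps}$ is $[0,\delta\eps]$ and rescaling stretches the \emph{support}, not the plateau. A point $a'\in K\cap B(\eps,a_i)$ with $|a'-a_i|=\eps/2$ has $\sigma_i(a')=\eps/2\gg 2\delta\eps$, so $\rho_i(a')<1$. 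The union of the plateaus $\{\rho_i=1\}$ is therefore roughly $\bigcup_i B(2\delta\eps,a_i)$, which does not cover $K$; the covering is only $K\subset\bigcup_i B(\eps,a_i)$. Hence $\rho$ is not $\equiv 1$ on any neighborhood of $K$, and property \eqref{cutoff-2} fails.

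The paper avoids this by a different split of the two factors. It uses $\tau_1=\tau_{\delta,\eps}\circ(r_{a^\perp}-(1-\delta)\eps)$, a \emph{shifted} cutoff in the tangential direction (distance to the normal affine plane), whose plateau is $\{r_{a^\perp}\le\eps\}\supset B(\eps,a)$; and $\tau_2=\tau_{\delta,\delta\eps}\circ r_a$, a slab cutoff run with parameter $\delta\eps$, so its support is the \emph{narrow} slab $\{r_a<\delta\eps\}$. The two modifications are linked: once you shift to enlarge the plateau, the shifted cutoff has derivatives of size $(\delta\eps)^{-|\alpha|}|\ln\delta|^{-1}$ everywhere in its transition region, and the only way this is dominated by $\Omega_{\delta^2\eps,a}(x)^{-|\alpha|}|\ln\delta|^{-1}$ is to force $\Omega_{\delta^2\eps,a}(x)=\max\{\delta^2\eps,r_a(x)\}<\delta\eps$ on the support, which is precisely what the narrowed slab factor guarantees. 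Your slab factor $\tau_{\delta,\eps}\circ(r_{a_i}-\eps')$ with $\eps'\sim\delta^2\eps$ has width $\approx\eps$ (support $\{r_{a_i}\lesssim\eps\}$), so if you tried to repair the plateau by shifting the radial factor, the derivative estimate \eqref{cutoff-4} would fail by a factor $\delta^{-|\alpha|}$ at points where $r_{a_i}(x)\approx\eps$. Your overlap/multiplicity argument via Corollary~\ref{cor:covering}\eqref{covering-ii} and the assembly $\rho=1-\prod(1-\rho_i)$ are fine; the construction of the individual $\rho_i$ is where the proof breaks.
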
 

\begin{proof}
For $a \in A$ let 
$r_{a^{\perp}}(x) = \dist( x , a + (T_a A)^{\perp})$  denote the distance of $x$ to the affine subspace
$a + (T_a A)^{\perp}$ of $\R^n$. 
For any $0<\eps<1$ we can consider the following smooth functions $\R^n \to [0,1]$: 
\[
  \tau_1 :=  \tau_{\delta , \eps } \circ ( r_{a^{\perp}} - (1-\delta)\eps)   \, , \qquad \tau_2 := \tau_{\delta ,  \delta \eps} \circ r_{a} \,  , \qquad \tau_a  := \tau_{1}  \cdot \tau_{2} \, . 
\]  
The function $\tau_a$ vanishes outside $B(2 \eps, a)$ and satisfies $\tau_a = 1$ on $\{ r_a \leq \delta^2 \eps \} \cap B(\eps, a)$.
\begin{center}
\includegraphics[scale=.7]{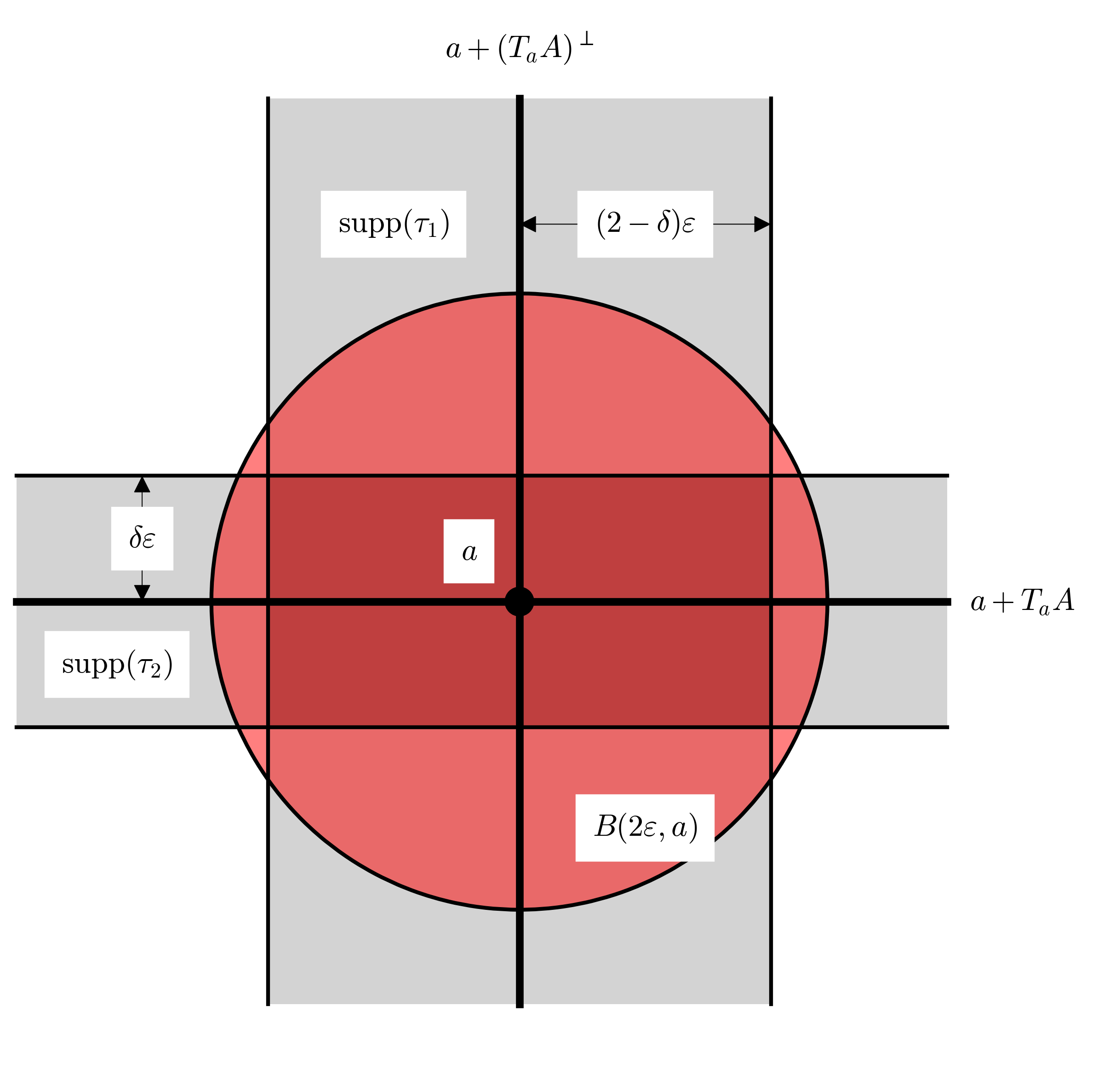}
\includegraphics[scale=.7]{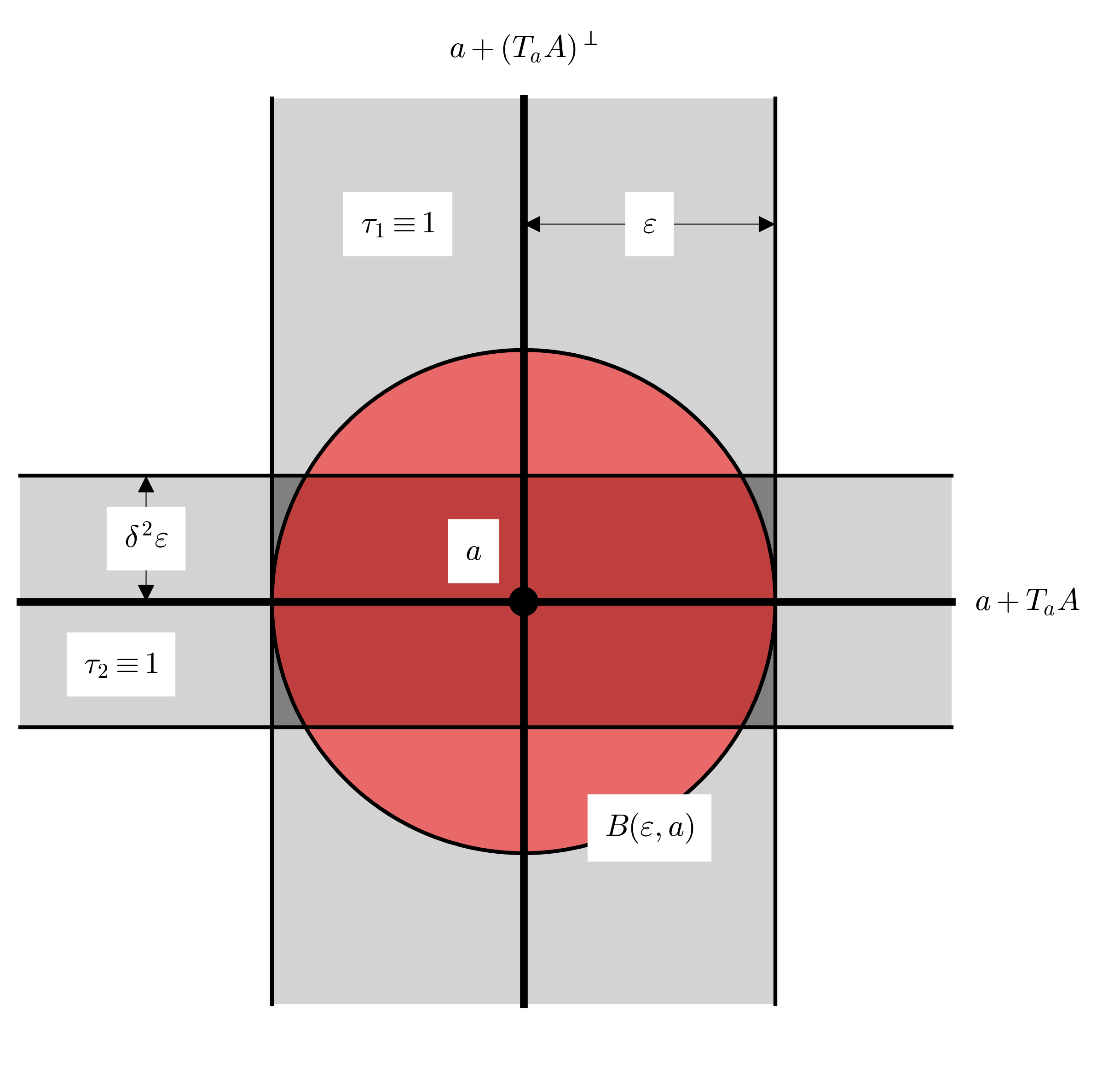}

\Abb{Inclusion $\supp(\tau_a)\subset B(2\eps,a)$}
\hspace{.1\textwidth}
\Abb{Identity $\tau_a \equiv 1$ on $\{ r_a \leq \delta^2 \eps \} \cap B(\eps, a)$}
\vspace{2mm}
\end{center}

Let $0 < |\alpha| \leq k$ and let $x \in \R^n$ with $\tau_a(x) \neq 0$, hence $r_a(x) < \delta \eps$. 
Then, on the one hand, we have 
\[
|  (D^{\alpha} \tau_{1}) (x) |   
\leq  
C_{\alpha}  \cdot   (\delta \eps)^{- | \alpha|}  \cdot |\ln \delta|^{-1} 
\leq 
C_{\alpha} \cdot \Omega_{\delta^2\eps, a}(x)^{- |\alpha|} \cdot | \ln \delta|^{-1} \, , 
\]
where in the first inequality we use Corollary~\ref{cor:der_distance}~\eqref{der_distance_ii} with $X=a + (T_a A)^{\perp}$ and the second inequality uses  $\max \{ \delta^2 \eps, r_a(x) \}  < \delta \eps$.

On the other hand, Corollary~\ref{cor:der_distance}~\eqref{der_distance_i} with $X=a + T_a A$ and $\eps$ replaced by $\delta\eps$ yields
\[
|D^{\alpha} \tau_{2} |    
\leq   
C_{\alpha} \cdot  \Omega_{\delta^2\eps, a}^{- |\alpha|}  \cdot |\ln \delta|^{-1} \, . 
\]
The product rule and $ |\ln \delta|^{-2} \leq |\ln \delta|^{-1}$ for $0 < \delta < \tfrac14$  imply
\begin{equation} \label{eq:esttauv} 
|D^{\alpha} \tau_a| 
\leq 
C_{\alpha}'  \cdot \Omega_{\delta^2\eps, a}^{- |\alpha|} \cdot |\ln \delta|^{-1}  \, . 
\end{equation} 
For the given $\delta$ and for any $\eta$, $\eps$, and $(a_i)_{i \in I}$ as in Corollary~\ref{cor:covering} we now set
\[
       \rho := 1 - \prod_{i \in I} (1 - \tau_{a_{i}})  : \R^n \to \R \, .
\]
It is clear that \eqref{cutoff-1} holds if $\eta$ (and hence $\eps$) is smaller than $\tfrac12\dist(K,\R^n\setminus U)$, which can be assumed without loss of generality. 
Also  \eqref{cutoff-2} is satisfied because the $B(\eps, a_i)$ cover $K$ and for every $i \in I$ we have  $K \cap B(\eps,a_i) \subset \{ r_{a_i} < \delta^2 \eps\}$ by Corollary~\ref{cor:covering}~\eqref{covering-i} and $\tau_{a_i} = 1$ on $\{ r_{a_i} < \delta^2 \eps\} \cap B(\eps, a_i)$. 

Finally, assertion \eqref{cutoff-4} follows from the product rule, estimate \eqref{eq:esttauv} applied to each $\tau_{a_j}$, $j \in I$,  and the following facts: 
\begin{enumerate}[\myicon]
\item 
There are at most $10^n$ indices $j \in J$ with $B(2 \eps, a_i) \cap B(2 \eps, a_j)\neq \emptyset$; 
\item 
If $B(2 \eps, a_i) \cap B(2 \eps, a_j)\neq \emptyset$   we have $|r_{a_i} - r_{a_j} | <  \delta^2 \eps $, hence $\Omega_{\delta^2\eps, a_i} \leq 2  \cdot \Omega_{\delta^2\eps, a_j}$;  
\item 
Higher powers of $|\ln\delta|^{-1}$ can be estimated by $|\ln\delta|^{-1}$.
\qedhere
\end{enumerate}
\end{proof} 

This assertion can be extended to nonuniform subsets as follows:

\begin{lem} \label{lem:cutoffnonpure} 
Let $K \subset A$ be compact but not necessarily uniform.
Let $U$ be an open neighborhood of $K$ in $\R^n$. 
Let $k\in\N$. 

Then there exists a constant $C_k>0$ such that for each $0 < \delta < \tfrac{1}{4}$ and $0 < \Lambda < 1$ there exist finite families $(\eps_i)_{i \in I}$, $0 < \eps_i < \Lambda$, and $(a_i)_{i\in I}$, $a_i \in K$, and a $C^{\infty}$-function $\rho : \R^n \to [0,1]$ with the following properties:
\begin{enumerate}[(i)]  
\item\label{cutoffnonpure-1}
${\rm supp}( \rho) \subset U$;  
\item\label{cutoffnonpure-2}
$\rho = 1$ on some  open neighborhood of $K$;  
\item\label{cutoffnonpure-3}  
For every  $x \in {\rm supp}(\rho)$ and $0 < | \alpha| \leq k$ there is an $i \in I$ with $x \in B(2 \eps_i, a_i)$ and 
\[
|D^{\alpha} \rho (x) | 
\leq 
C_k \cdot  \Omega_{\delta^2\eps_i, a_i}(x)^{- |\alpha|}  \cdot   |\ln \delta|^{-1}  \, .   
\]
\end{enumerate} 
\end{lem}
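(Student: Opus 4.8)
The plan is to reduce Lemma~\ref{lem:cutoffnonpure} to the uniform case (Lemma~\ref{lem:cutoff}) by decomposing $K$ along the stratification \eqref{eq:chain}. Recall that $\Sigma_\ell = \{a \in A \mid \dim T_aA \ge n-\ell\}$ is closed and that $\overline{\Sigma_\ell \setminus \Sigma_{\ell-1}} \subset \Sigma_\ell$. So I would work downward through $\ell = n, n-1, \ldots, 0$. At stage $\ell$ I have already built a cutoff $\rho_{\ell+1}$ (equal to $1$ near $K \cap \Sigma_{\ell+1}$, supported in $U$, with the desired derivative bound), and I want to enlarge it so that it equals $1$ near $K \cap \Sigma_\ell$ too. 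The new piece of $K$ to be covered is $K_\ell := K \cap (\Sigma_\ell \setminus \Sigma_{\ell-1})$, which is uniform (every point has $\dim T_aA = n-\ell$) but in general not closed and not compact. Here is the key point: since $\rho_{\ell+1} \equiv 1$ on an open neighborhood $W$ of $K \cap \Sigma_{\ell+1}$, the set $K_\ell \setminus W = K \cap (\Sigma_\ell \setminus \Sigma_{\ell-1}) \setminus W$ is compact. Indeed it is contained in $\Sigma_\ell$ which is closed, it is contained in $K$ which is compact, and removing the open set $W$ keeps it closed; crucially $\Sigma_{\ell-1} \cup (\R^n \setminus W)$ covers the part of $\overline{K_\ell}$ not in $W$ — one checks that a limit point of $K_\ell$ lying outside $W$ cannot lie in $\Sigma_{\ell-1}$ since that would contradict $\overline{\Sigma_\ell \setminus \Sigma_{\ell-1}}$ meeting... more carefully: $\overline{K_\ell} \setminus W \subset \overline{\Sigma_\ell \setminus \Sigma_{\ell-1}} \setminus W \subset \Sigma_\ell \setminus W$, and one shows $\overline{K_\ell}\setminus W$ is actually a compact \emph{uniform} subset of $A$ by passing to the closure inside the fibered piece; alternatively, replace $K_\ell$ by the compact uniform set $K' := \overline{K_\ell}\cap (\Sigma_\ell \setminus V_\ell)$ for a suitable open $V_\ell \supset \Sigma_{\ell-1} \cup (\text{where we already win})$. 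Let me spell this out below.

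Concretely, I would choose open neighborhoods as follows. Having produced $\rho_{\ell+1}$ with $\rho_{\ell+1} = 1$ on an open set $W_{\ell+1} \supset K \cap \Sigma_{\ell+1}$, pick an open $V' \subset \R^n$ with $K \cap \Sigma_{\ell+1} \subset V' \subset \overline{V'} \subset W_{\ell+1}$. Then $K'_\ell := K \cap \Sigma_\ell \setminus V'$ is compact (closed subset of the compact $K$), and it is a \emph{uniform} subset of $A$: every $a \in K'_\ell$ satisfies $a \notin \Sigma_{\ell-1}$ — because $\Sigma_{\ell-1} \cap \Sigma_\ell = \Sigma_{\ell-1}$ and points of $K \cap \Sigma_{\ell-1}$ lie in $K \cap \Sigma_{\ell+1} \cup (\text{lower strata already handled})$; more simply, I will arrange inductively that $V'$ also contains $K \cap \Sigma_{\ell-1}$, which is possible since $K \cap \Sigma_{\ell-1} \subset K \cap \Sigma_\ell$ and $\Sigma_{\ell-1}$ is closed. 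So assume $K \cap \Sigma_{\ell-1} \subset V'$ as well; then $K'_\ell \subset \Sigma_\ell \setminus \Sigma_{\ell-1}$ is compact and uniform. Apply Lemma~\ref{lem:cutoff} to $K'_\ell$, the ambient neighborhood $U \cap (\R^n \setminus \overline{V''})$ for a slightly smaller $V'' \subset V'$, a shrinking parameter $\eps$ small enough to also respect $\Lambda$, and obtain a cutoff $\sigma_\ell$ with $\sigma_\ell = 1$ near $K'_\ell$, $\supp \sigma_\ell \subset U \setminus \overline{V''}$, and the bound $|D^\alpha \sigma_\ell| \le C_k \Omega_{\delta^2\eps, a_i}^{-|\alpha|} |\ln\delta|^{-1}$ on each $B(2\eps, a_i)$. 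Set $\rho_\ell := 1 - (1-\rho_{\ell+1})(1-\sigma_\ell)$. Then $\rho_\ell = 1$ on the open set $W_\ell := W_{\ell+1} \cup \{\sigma_\ell = 1\} \cup V'$, which contains $K \cap \Sigma_\ell$ (points in $V'$ are covered by $V'$; the rest by $\{\sigma_\ell = 1\}$). Its support lies in $U$. After $\rho_0$, since $\Sigma_n = A \supset K$, we are done, setting $\rho := \rho_0$ and collecting all the $(\eps_i, a_i)$ from all $n+1$ stages into single families indexed by a disjoint union $I$.

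The derivative bound \eqref{cutoffnonpure-3} is then checked by the Leibniz rule applied to the $n+1$-fold product $\prod_{\ell=0}^n (1 - \sigma_\ell)$ (here $\sigma_n$ plays the role of $\rho_{\ell+1}$ at the top, i.e.\ we write $\rho = 1 - \prod_{\ell} (1-\sigma_\ell)$). Given $x \in \supp\rho$ and $0 < |\alpha| \le k$, expand $D^\alpha\rho$ as a sum of products of $D^{\beta}(1-\sigma_{\ell'})$ factors; at least one factor is differentiated, say $D^{\beta}\sigma_{\ell'}$ with $\beta \ne 0$, and this forces $x \in B(2\eps_i, a_i)$ for some $i$ belonging to stage $\ell'$, giving $|D^{\beta}\sigma_{\ell'}(x)| \le C \Omega_{\delta^2\eps_i, a_i}(x)^{-|\beta|}|\ln\delta|^{-1}$. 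The undifferentiated factors contribute $1$ and the derivatives of the \emph{other} $\sigma_{\ell''}$'s are each bounded by $C\Omega_{\delta^2\eps_{i''},a_{i''}}(x)^{-|\beta''|}|\ln\delta|^{-1} \le C$ (since $\Omega \ge \delta^2\eps_{i''} $ and $\eps_{i''} < \Lambda < 1$ makes those $\Omega^{-|\beta''|}$ bounded by a constant depending only on $\delta, \Lambda$ — or, cleaner, one absorbs everything into the single ``worst'' factor and uses that the number of strata is $n+1$, a fixed constant, together with $|\ln\delta|^{-m} \le |\ln\delta|^{-1}$). Taking the $i$ realizing the maximal such term (equivalently the smallest $\Omega_{\delta^2\eps_i,a_i}(x)$) gives the claimed estimate with a new constant $C_k$ depending only on $n$ and $k$.

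The main obstacle is the bookkeeping in the inductive step — making sure that at each stage the leftover set $K'_\ell$ is genuinely \emph{compact} and \emph{uniform}, which is exactly where the closedness of the $\Sigma_\ell$ and the containment $\overline{\Sigma_\ell \setminus \Sigma_{\ell-1}} = \Sigma_\ell$ from \eqref{eq:chain} get used, and where one must be careful to have already arranged $K \cap \Sigma_{\ell-1}$ to sit inside the region where $\rho_{\ell+1}$ is already $\equiv 1$. Everything else (choosing nested neighborhoods $V'' \subset \overline{V''} \subset V' \subset \overline{V'} \subset W_{\ell+1}$, shrinking $\eps$ to respect $\Lambda$ and the support constraints, the Leibniz-rule derivative estimate) is routine given Lemma~\ref{lem:cutoff} and Corollary~\ref{cor:covering}.
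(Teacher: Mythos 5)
Your core idea matches the paper's proof: induct along the stratification $\emptyset = \Sigma_{-1}\subset\Sigma_0\subset\cdots\subset\Sigma_n=A$, at each stage excise the region where the previously-built cutoff is already $\equiv 1$ to land on a compact \emph{uniform} residual set, apply Lemma~\ref{lem:cutoff} there, combine the pieces multiplicatively, and control $D^\alpha\rho$ by absorbing every factor into the one with the smallest $\Omega_{\delta^2\eps_i,a_i}(x)$ together with $|\ln\delta|^{-m}\leq|\ln\delta|^{-1}$.

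However, as written your induction runs in the wrong direction. The sets $\Sigma_\ell$ \emph{increase} with $\ell$ (they are the loci where $\dim T_aA\geq n-\ell$). You say you go from $\ell=n$ down to $\ell=0$, assuming at stage $\ell$ that $\rho_{\ell+1}\equiv 1$ near $K\cap\Sigma_{\ell+1}$. But then the base case $\ell=n$ already requires covering $K\cap\Sigma_n=K$; and since $K\cap\Sigma_\ell\subset K\cap\Sigma_{\ell+1}$, every subsequent stage is vacuous --- indeed your own choice $V'\supset K\cap\Sigma_{\ell+1}\supset K\cap\Sigma_\ell$ forces $K'_\ell:=K\cap\Sigma_\ell\setminus V'=\emptyset$. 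What you need, and what your later sentences actually indicate you have in mind (you take $\Sigma_\ell\setminus\Sigma_{\ell-1}$ as the ``new piece'' and worry about limit points falling into $\Sigma_{\ell-1}$), is the \emph{upward} induction from $\ell=-1$ (with $\rho_{-1}\equiv 0$) to $\ell=n$. At stage $\ell$ the compact set $K\cap\Sigma_{\ell-1}$ lies inside the open set $\{\rho_{\ell-1}=1\}$ by the inductive hypothesis, so $K':=K\cap\Sigma_\ell\setminus\{\rho_{\ell-1}=1\}$ is compact and, being disjoint from $\Sigma_{\ell-1}$, a uniform subset of $A$; this is exactly what the paper does (without the intermediate $V',V''$, which are harmless but unnecessary). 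With the indices flipped, the rest of your argument, including the Leibniz estimate for part \eqref{cutoffnonpure-3}, is correct and coincides with the paper's.
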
 

\begin{proof} 
We use the chain~\eqref{eq:chain} for an inductive proof. 
For $-1 \leq \ell \leq n$ we set  $K_{\ell} := K \cap \Sigma_{\ell}$, which is a compact subset of $A$. 
We start the induction with $\ell=-1$.
Then $K_{-1}=\emptyset$ and we simply put $\rho_{-1}:=0$.

Assume  $\rho_{\ell-1}$ has been constructed for the compact set $K_{\ell-1} \subset A$ with  index set $I_{\ell-1}$. 
Using the inductive assumption~\eqref{cutoffnonpure-2} we find a compact and uniform subset  $K' \subset K_{\ell} \setminus K_{\ell-1}$ with $K_{\ell}   \subset K' \cup  \{\rho_{\ell-1} = 1\}$.  

We apply Lemma~\ref{lem:cutoff} to $K'$ to obtain $0 < \eps'  < \Lambda$, a family $(a_i)_{i \in I'}$, $a_i \in K'$, and a smooth function $\rho' : \R^n \to [0,1]$ with properties as stated in Lemma~\ref{lem:cutoff} (with $K'$ instead of $K$). 

Set $I_{\ell} := I_{\ell-1} \sqcup I'$, $ \eps_i := \eps'$ for $i \in I'$, and $\rho_{\ell} := 1 - (1- \rho_{\ell-1}) ( 1 - \rho')$. 
Then $\rho$ is a $C^{\infty}$-function $\R^n \to [0,1]$ with properties~\eqref{cutoffnonpure-1} (for small enough $\eps'$) and \eqref{cutoffnonpure-2} (with $K_{\ell}$ instead of $K$). 

For \eqref{cutoffnonpure-3} let $x \in {\rm \supp}(\rho_{\ell})$.
Then there is an $i \in I_{\ell}$ with $x \in B(2 \eps_i, a_i)$.
Amongst all those $i$ choose the one for which $\Omega_{\delta^2\eps_i, a_i}(x)$ attains its minimal value. 
Then \eqref{cutoffnonpure-3} holds by the product rule and by the induction hypothesis for $\rho_{\ell-1}$ (and estimating higher powers of $|\ln \delta|^{-1}$ by $| \ln \delta|^{-1}$). 
\end{proof}  

In the remainder of this section we use generalized tangent spaces to obtain improved Taylor estimates. 
Let $k\geq 1$ and let $F : U \to \R$ be a $C^k$-function defined on a neighborhood $U$ of $A$ in $\R^n$.
Furthermore assume that $j^{k-1} F|_{A\cap U} = 0$. 
For $a \in A\cap U$ let 
\[
\mathscr{T}_{a,k} F (x) 
:=
\sum_{|\beta| \le k} \frac{1}{\beta!} \cdot D^{\beta} F(a)  \cdot (x-a)^{\beta} 
=
\sum_{|\beta| = k} \frac{1}{\beta!} \cdot D^{\beta} F(a)  \cdot (x-a)^{\beta} 
: \R^n  \to \R 
\]
denote the $k^\mathrm{th}$ Taylor polynomial of $F$ at $a$.
The point of the following lemma is the fact that the function $r_a$ in the estimate is not the distance to the point $a$ but the (smaller) distance to the affine space $a+T_aA$.

\begin{lem} \label{lem:taylor} 
Let $K \subset A$ be compact but not necessarily uniform.
Then there is a constant $C_K > 0$ such that for all $a \in K$ and $x \in B(1,a)$ we have 
\[
    |  \mathscr{T}_{a,k} F (x) | \leq C_K  \cdot r_a(x)^{k} \, . 
\]
\end{lem}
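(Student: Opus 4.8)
The plan is to reduce the estimate to a vanishing property of the top–order derivative tensor of $F$ along the generalized tangent space, and then to control everything by a single operator norm that is uniformly bounded on $K$ by compactness.

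First I would record that, since $j^{k-1}F|_{A\cap U}=0$, all partial derivatives $D^{\beta}F(a)$ with $|\beta|\le k-1$ vanish at every $a\in A\cap U$, so $\mathscr{T}_{a,k}F$ is already its purely homogeneous degree-$k$ part. Writing $D^{k}F(a)[v_1,\dots,v_k]:=\sum_{i_1,\dots,i_k}\partial_{i_1}\!\cdots\partial_{i_k}F(a)\,v_1^{i_1}\cdots v_k^{i_k}$ for the symmetric $k$-linear form of the top-order partials (symmetric because $F\in C^k$), one has $\mathscr{T}_{a,k}F(x)=\tfrac1{k!}\,D^{k}F(a)[(x-a),\dots,(x-a)]$.

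The key step, which I expect to be the main point, is to show that $D^{k}F(a)[v_1,\dots,v_k]=0$ as soon as \emph{a single one} of the arguments lies in $T_aA$ — a genuinely stronger statement than ``the associated homogeneous polynomial vanishes on $T_aA$''. To get it I would fix $v_2,\dots,v_k$ and consider $h:=\partial_{v_2}\cdots\partial_{v_k}F$ on $U$: this is a linear combination of partial derivatives of $F$ of order $k-1$, hence of class $C^1$, and it vanishes on $A\cap U$ by the standing hypothesis $j^{k-1}F|_{A\cap U}=0$. Thus the germ of $h$ at $a$ belongs to $\mathscr{G}_{A,a}$, so Definition~\ref{dfn:tangent} gives $T_aA\subseteq\ker(d_ah)$; and since $d_ah(v_1)=D^{k}F(a)[v_1,\dots,v_k]$, the vanishing follows when $v_1\in T_aA$, and for any argument by symmetry of the form.

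With this in hand the conclusion is immediate: for $a\in K$ and $x\in B(1,a)$ decompose $x-a=u+w$ with $u\in T_aA$ and $w\in(T_aA)^{\perp}$, so $r_a(x)=|w|$; expanding $D^{k}F(a)[(u+w),\dots,(u+w)]$ by multilinearity, every term except $D^{k}F(a)[w,\dots,w]$ carries at least one $u$–argument and therefore vanishes by the previous step, whence $|\mathscr{T}_{a,k}F(x)|=\tfrac1{k!}\bigl|D^{k}F(a)[w,\dots,w]\bigr|\le \tfrac1{k!}\|D^{k}F(a)\|\,r_a(x)^{k}$. Since $a\mapsto\|D^{k}F(a)\|$ is continuous and $K$ is compact, one takes $C_K:=\tfrac1{k!}\sup_{a\in K}\|D^{k}F(a)\|$. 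Note that the restriction $x\in B(1,a)$ plays no role here, and that the naive approach using only the vanishing of $\mathscr{T}_{a,k}F$ on $a+T_aA$ would fail: the mixed terms (those involving both $u$ and $w$) are only bounded by a product of a power of $|u|$ and a lower power of $|w|$, which is not controlled by $r_a(x)^{k}=|w|^{k}$ — it is essential that they vanish outright, which is exactly what the key step provides.
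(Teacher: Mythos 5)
Your proof is correct and hinges on the same key insight as the paper's: each $D^\beta F$ with $|\beta|=k-1$ is a $C^1$-germ vanishing on $A$, so $d_a(D^\beta F)|_{T_aA}=0$, which is exactly the statement that the symmetric tensor $D^kF(a)$ annihilates any argument tuple containing a vector of $T_aA$. Where the paper converts this into $j^{k-1}\mathscr{T}_{a,k}F|_{a+T_aA}=0$ by iterative integration and then invokes Taylor's theorem along a segment orthogonal to $a+T_aA$, you instead expand $D^kF(a)$ multilinearly in the decomposition $x-a=u+w$ with $u\in T_aA$, $w\in (T_aA)^\perp$; these are equivalent repackagings of the same estimate, and your side remarks (that the restriction $x\in B(1,a)$ plays no role, and that mere vanishing of the homogeneous polynomial on $T_aA$ would not suffice) are both accurate.
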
 

\begin{proof} 
Let $|\beta| = k-1$.
Since $d_a ( D^{\beta} (\mathscr{T}_{a,k} F)) = d_a( D^{\beta} F)$ and $D^{\beta} F : U \to \R$ is a $C^1$-function vanishing on $A \cap U$, the affine map $D^{\beta} ( \mathscr{T}_{a,k} F) : \R^n \to \R  $  vanishes on the affine subspace $a+T_a A \subset \R^n$ by the definition of generalized tangent spaces. 
Using $j^{k-1} \mathscr{T}_{a,k} F (a)  = 0$ this implies, by iterative integration,  that $j^{k-1} \mathscr{T}_{a,k} F |_{a+T_a A} = 0$.

Now let $x \in B(1,a)$ and let $x_0$ its orthogonal projection onto the affine subspace $a + T_a A \subset \R^n$.  
Let $\gamma:[0,r_a(x)]\to \R^n$ be a unit speed line segment  joining $x_0 = \gamma(0)$ and $x= \gamma(r_a(x))$. 
Since $j^{k-1} \mathscr{T}_{a,k} F (x_0) = 0$, Taylor's theorem implies 
\begin{align*}
|  \mathscr{T}_{a,k} F(x)|
&\le
\frac{1}{k!} \cdot\big\|( \mathscr{T}_{a,k} F  \circ\gamma)^{(k)}\big\|_{C^0([0,r_a(x)])} \cdot  r_a(x)^k
\le
\frac{1}{k!} \cdot\|  \mathscr{T}_{a,k} F  \|_{C^k(B(1,a))}  \cdot  r_a(x)^k \, .  
\end{align*}
Hence we can work with $C_K := \tfrac{1}{k!} \cdot{\rm max}_{a \in K} \|  \mathscr{T}_{a,k} F \|_{C^k(B(1,a))}$.
\end{proof}

For the remainder term $ \mathscr{R}_{a,k}F (x) := F(x) - \mathscr{T}_{a,k} F(x)$ we have the following standard estimate:

\begin{lem} \label{lem:remainder} 
Let $K \subset A$ be compact.
Then  for $\delta > 0$ there exists $\eta > 0$ such that for all $0 < \eps < \eta$, $a \in K$, and $x \in B(2\eps,a)$ we have $x \in U$ and $|\mathscr{R}_{a, k} F(x) |\leq ( \delta^2 \eps)^{k}$. 
\end{lem}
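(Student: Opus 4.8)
The plan is to reduce the statement to the standard $C^k$-Taylor remainder estimate and then make the constants uniform over the compact set $K$. First I would recall that since $F \in C^k(U,\R)$ and $j^{k-1}F|_{A \cap U} = 0$, for each $a \in A \cap U$ we have $\mathscr{T}_{a,k}F(x) = \sum_{|\beta|=k}\tfrac{1}{\beta!}D^\beta F(a)(x-a)^\beta$ and the remainder $\mathscr{R}_{a,k}F(x) = F(x) - \mathscr{T}_{a,k}F(x)$, so the usual integral (or Lagrange) form of Taylor's theorem applied to $t \mapsto F(a + t(x-a))$ gives
\[
|\mathscr{R}_{a,k}F(x)| \le \frac{|x-a|^k}{k!}\cdot \sup_{|\beta|=k,\; y\in[a,x]} \bigl| D^\beta F(y) - D^\beta F(a)\bigr| \cdot C_n
\]
for a purely combinatorial constant $C_n$ depending only on $n$ (counting multiindices of length $k$), valid as soon as the segment $[a,x]$ stays inside $U$.

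Second, I would arrange that $[a,x]$ stays in $U$ for $a \in K$ and $x \in B(2\eps,a)$: since $K$ is compact and $U$ is open, $d := \dist(K,\R^n\setminus U) > 0$ (interpreted as $+\infty$ if $U = \R^n$), so choosing $\eta \le d/2$ forces $B(2\eps,a) \subset U$ for all $\eps < \eta$, which also gives the first claim $x \in U$. Third, for the smallness of the remainder I would invoke uniform continuity: the functions $D^\beta F$ with $|\beta| = k$ are continuous on the compact set $\overline{B(d/2, K)} := \{y : \dist(y,K) \le d/2\} \subset U$, hence uniformly continuous there. So there is $\theta > 0$ such that $|y - y'| < \theta$ implies $|D^\beta F(y) - D^\beta F(y')| < \delta^{2k}/(C_n)$ for every such $\beta$ — no, more carefully, small enough that the product in the displayed bound is controlled. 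Then for $x \in B(2\eps, a)$ with $2\eps < \theta$ we get $|y - a| < 2\eps < \theta$ for all $y \in [a,x]$, so the supremum above is as small as we like, and $|x - a|^k < (2\eps)^k$. Combining, $|\mathscr{R}_{a,k}F(x)| \le \tfrac{(2\eps)^k}{k!} C_n \cdot (\text{small}) \le (\delta^2\eps)^k$ once $\eta$ is chosen small enough (shrinking it further so that both $2\eta < \theta$ and the uniform-continuity modulus absorbs the factor $2^k C_n/(k!\,\delta^{2k})$); set $\eta$ to be this final value.

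There is essentially no serious obstacle here — the statement is the classical qualitative Taylor remainder bound upgraded to be uniform over a compact family of basepoints, and the only thing to be careful about is the bookkeeping: the constant $\eta$ must be allowed to depend on $\delta$, on $K$, on $U$, and on $F$ (which is fine, since the statement only asserts existence of $\eta$ for given $\delta$), and one must not confuse the two roles of $\eps$ (the radius of the ball versus the scale $\delta^2\eps$ appearing on the right). I would phrase the choice of $\eta$ as the minimum of $d/2$ and a radius small enough that the modulus of continuity of the top-order derivatives of $F$ on a fixed compact neighborhood of $K$ beats the explicit constant $2^k C_n/(k!\,\delta^{2k})$, and then the inequality $|\mathscr{R}_{a,k}F(x)| \le (\delta^2\eps)^k$ falls out directly.
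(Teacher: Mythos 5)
Your argument is correct and is essentially the paper's own proof, spelled out in full: the paper states it as the standard Taylor remainder bound $|\mathscr{R}_{a,k}F| = \mathrm{o}(r^k) = \mathrm{o}(\eps^k)$ made uniform over the compact $K$, which is exactly what your uniform-continuity/modulus argument implements. The bookkeeping you flag (choosing $\eta \le \dist(K,\R^n\setminus U)/2$, then shrinking $\eta$ so the modulus of continuity of the top-order derivatives beats $2^k C_n/(k!\,\delta^{2k})$) is accurate.
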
 

\begin{proof}
Let $r(x)=|x-a|$ denote the distance of $x$ to the point $a$.
The lemma follows from the standard estimate of the remainder term in the Taylor expansion:
\begin{equation*}
|\mathscr{R}_{a, k} F| = \mathrm{o}(r^k) = \mathrm{o}(\eps^k)
\end{equation*}
where the estimate is uniform on $K$.
\end{proof}

\section{Proof of Theorem~\ref{thm:FlexLem1}}

For $k=0$ the proof is easy and postponed to Appendix~\ref{sec-AnhangA}. 
In this section we will concentrate on the case $k \geq 1$. 

We use the ``tilde notation'' to denote by $\Ft$ the map $[0,1]\times U \to X$ corresponding to $F$ via $\Ft(t,u)=F(t)(u)$.
The condition $F\in C^\ell([0,1],C^k(U,X))$ is equivalent to the requirement that, in local coordinates $u^1,\ldots,u^n$ of $U$, the partial derivatives $(\frac{\partial}{\partial t})^m(\frac{\partial}{\partial u^1})^{\alpha_1}\cdots (\frac{\partial}{\partial u^n})^{\alpha_n}\Ft$ exist and are continuous for $m\le \ell$ and $|\alpha|=\alpha_1+\ldots+\alpha_n \le k$.
See \cite[Thm.~2]{F1945} for the case $\ell=k=0$ and \cite{AS2015} for the general case.

\begin{proof}[Proof of Theorem~\ref{thm:FlexLem1} for $k\ge1$]
\uline{\emph{Step~1:}}
We first show that we can assume without loss of generality that  $X \to V$ is a $C^{\infty}$-vector bundle. 

For this aim we equip the total space $X$ with an auxiliary complete Riemannian metric.
Let $T^{\rm vert}X\to X$ be the vertical tangent bundle, whose fibers are the tangent spaces of the fibers of $X$.
For each choice of  $f_0' \in C^{\infty}(V,X)$ we can consider the $C^\infty$-vector bundle $(f_0')^*T^{\rm vert}X \to V$.
The fiberwise exponential map yields a fiber-preserving $C^\infty$-diffeomorphism from a fiberwise convex open neighborhood $C \subset  (f_0')^*T^{\rm vert}X$ of the zero section onto an open neighborhood $W$ of $f_0'(V) \subset X$.

Choosing $f_0'$ close enough to $f_0$ in the strong topology on $C^0(  V, X)$ we can assume that the image of $f_0$ is contained in $W$.
Hence it defines a $C^{\kappa}$-section of the $C^{\infty}$-vector bundle $(f_0')^*T^{\rm vert}X \to X$.

Since $k \ge 1$ the function $F(t)|_{V_0}$ is independent of $t$ by assumption. 
Shrinking $U$ if necessary we can assume that the image of $\Ft$ is contained in $W \approx C$.
The constructions in Step~3 will never leave the image of $\Ft$, and also the mollifying procedure \eqref{eq:Mollie} in Step~4 happens within the fiberwise convex neighborhood $C$ of $0$. 

Hence we can and will assume without loss of generality that $X \to V$ is a $C^\infty$-vector bundle.

\medskip

\uline{\emph{Step~2:}}
We can furthermore assume that $V$ is an open subset of some Euclidean space $\R^n$ and $V_0$ is not only closed in $V$, but also closed in $\R^n$.

For this aim choose a smooth proper embedding $V \subset \R^n$ for the given smooth manifold $V$. 
By properness of the embedding $V$ and $V_0$ become closed subsets of $\R^n$. 
Furthermore  we find  a continuous map $\eta : V \to \R_+$ such that the normal exponential map along $V$ identifies $\{ (v, \xi) \mid v \in V,\, \xi \in (T_v V)^{\perp} \subset \R^n , \, | \xi | < \eta(v) \} \subset V \times \R^n$ with an open tubular neighborhood $N_V$ of $V$ in $\R^n$.
Let $p : N_V \to V$ be the bundle projection and $p^* X \to N_V$ be the pull back bundle. 
This results in a bundle map
\[ 
\xymatrix{
    p^* X  \ar[r]^{\overline{p}}  \ar[d] & X \ar[d]   \\
     N_V \ar[r]^{p}                      & V  \, .} 
\]
Then $p^{-1}(U) \subset N_V$ is an open neighborhood of the closed subset $V_0 \subset N_V$.
We define a PDR $\hat\RR\subset J^k(p^*X)$ over $N_V$ by setting 
$$
\hat\RR := \{j^k\phi(x) \mid x\in N_V, \, \phi \mbox{ is a local section of }p^*X\mbox{ near $x$ such that }\phi|_{\mathrm{dom}(\phi)\cap V}\mbox{ solves }\RR\}.
$$
One easily checks that this relation is open.
The section $p^*f_0$ (characterized by $\overline{p}\circ (p^*f_0) = f_0 \circ p$) of $p^*X$ solves $\hat\RR$. 
Similarly, the section $p^*F(t)$ of $p^*X|_{p^{-1}(U)}$ solves $\hat\RR$ over $p^{-1}(U)$ and the $(k-1)$-jet of $p^*F(t)$ is independent of $t$ along $V_0$. 
Conversely, any $C^{\kappa}$-solution of $\hat\RR$ (with $\kappa \geq k$) restricts to a $C^{\kappa}$-solution of $\RR$ over $V$. 

Hence, working with $N_V$ instead of $V$ and with the given $V_0 \subset V$, we can and will assume in the following that $V \subset \R^n$ is an open subset and $V_0$ is closed in $\R^n$. 

\medskip

\uline{\emph{Step~3:}}
Now  we prove Theorem~\ref{thm:FlexLem1}  for $\ell=\infty$ and $\kappa = k\ge1$.
Since we assume that $V \subset \R^n$ is an open subset we can work with global coordinates $(x^1, \ldots, x^n)$ on $V$. 

Since $V_0 \subset  \R^n$ is closed we find a countable family of compact sets $(K_{\nu})_{\nu\in \mathscr{I}}$, $K_{\nu} \subset V_0$, whose union covers $V_0$,  together with relatively compact open neighborhoods $K_\nu \subset U_{\nu} \subset U$  such that the family $( U_{\nu})_{\nu\in \mathscr{I}}$ satisfies:
\begin{enumerate}[\myicon]
\item This family is locally finite, that is,  around each point in $\R^n$ there is a neighborhood in $\R^n$ meeting only finitely many $U_\nu$; 
\item 
For each $\nu \in \mathscr{I}$ the set $\mathscr{I}_{\nu} := \{ \mu \in \mathscr{I} \mid U_{\nu} \cap U_{\mu} \neq \emptyset\}$ is finite;
\item
The vector bundle $X$ can be trivialized over each $\overline{U}_{\nu}$, and we fix such trivializations. 
\end{enumerate}

For each $\nu \in \mathscr{I}$ we fix $0 < \delta_{\nu} < \tfrac14$ and $0 < \Lambda_{\nu} < \tfrac12$. 
The precise values will be determined later. 
Put $\tilde\delta_\nu := \max\{\delta_\mu \mid \mu\in\mathscr{I}_\nu\}$.

Apply Lemma~\ref{lem:cutoffnonpure} to $A = V_0$, $ K = K_{\nu} \subset A$ and $U = U_{\nu}$. 
We get finite families $(\eps_{\nu,i})_{i \in I_{\nu}}$, $0 < \eps_{\nu, i} < \Lambda_{\nu}$, and $(a_{\nu,i})_{i \in I_{\nu}}$, $a_{\nu,i} \in K_{\nu}$, together with  $C^{\infty}$-functions $\rho_{\nu} : \R^n \to [0,1]$ as described in Lemma~\ref{lem:cutoffnonpure}. 

We define the $C^{\infty}$-function 
\begin{equation} \label{eq:deftau} 
   \tau := 1 - \prod_{\nu \in \mathscr{I}} ( 1 - \rho_{\nu}) : \R^n \to [0,1] \, .  
\end{equation} 
This is well defined and $C^{\infty}$ since $(U_\nu)_{\nu \in \mathscr{I}}$ is locally finite. 
Furthermore, ${\rm supp} (\tau) \subset \bigcup_{\nu\in\mathscr{I}}U_\nu \subset U$ and $\tau = 1$ on a neighborhood of $V_0$. 

Let $x \in {\rm supp}(\tau)$. 
By Lemma~\ref{lem:cutoffnonpure}~\eqref{cutoffnonpure-3} and finiteness of the $\I_\nu$ there are finitely many $(\nu, i)$ with $x \in B(2\eps_{\nu,i}, a_{\nu,i})$.
Amongst all such $(\nu, i)$ we choose the one for which $\Omega_{\delta_{\nu}^2\eps_{\nu, i} , a_i} (x)$ attains its minimal value. 
The estimate in Lemma~\ref{lem:cutoffnonpure}~\eqref{cutoffnonpure-3} and the product rule yield
\begin{equation}
|D^\alpha\tau(x)|
\leq
C_{k,\nu} \cdot \Omega_{\delta_{\nu}^2\eps_{\nu, i} , a_{\nu , i}} (x)^{-|\alpha|} \cdot | \ln \tilde\delta_{\nu}|^{-1}
\label{eq:Datau}
\end{equation}
for all $\alpha$ with $1\le|\alpha|\le k$.
Here the constant $C_{k,\nu}$ is independent of $\delta_\mu$, $\Lambda_\mu$, $\eps_{\mu, i}$, $a_{\mu, i}$, and $x$.

We define sections $f(t) : V \to X$ by 
\begin{equation} 
f(t)(v) 
:= 
\begin{cases}
\tilde F(t\tau(v), v), & \text{if }v\in U,\\
f_0(v), & \text{else}.
\end{cases}
\label{eq:deff} 
\end{equation}
This defines $C^k$-sections of $X$ which depend smoothly on $t$.
We will show that the $f(t)$ solve $\RR$ if the constants $\delta_\nu$ and $\Lambda_\nu$ are properly chosen.

Using the trivialization of the bundle $X$ over $U_\nu$ we identify sections of $U_\nu$ with vector-valued functions.
Induction\footnote{For $|\alpha|=0$ equation~\eqref{eq:Errorterm1} is nothing but the definition $\ft(t,x)=\Ft(t\tau,x)$. The induction step consists of differentiating \eqref{eq:Errorterm1}.} 
on $|\alpha|$ shows
\begin{equation}
(D^{\alpha}f)(t,x)
=
(D^{\alpha}\Ft)(t\tau(x) ,x) 
+ 
\sum_{\lambda=1}^{|\alpha|} \sum_{\hat\alpha} (D^{\hat\alpha} \partial_t^\lambda \Ft)(t\tau(x),x) \cdot t^\lambda \cdot P_{\alpha\hat\alpha \lambda}(x).
\label{eq:Errorterm1}
\end{equation}
Since $F(t)$ solves $\RR$ and the relation is open there exists $\Delta_\nu>0$ such that if 
\begin{equation}
\bigg| \sum_{\lambda=1}^{|\alpha|} \sum_{\hat\alpha} (D^{\hat\alpha} \partial_t^\lambda \Ft)(t\tau(y),y) \cdot t^\lambda \cdot P_{\alpha\hat\alpha \lambda}(y)\bigg| 
< 
\Delta_\nu
\label{eq:RRest}
\end{equation}
for all $|\alpha|\le k$ and for some $y\in U_\nu$ then $f(t)$ solves $\RR$ over $y$ as well.

In \eqref{eq:Errorterm1} and \eqref{eq:RRest} the inner sum is taken over all multiindices $\hat\alpha$ satisfying $\hat\alpha\le\alpha$ and $\lambda+|\hat\alpha| \le|\alpha|$.
Each $P_{\alpha\hat\alpha  \lambda}$ is a universal polynomial in derivatives of $\tau$, independent of $F$.
It is weighted homogeneous of degree $|\alpha|-|\hat\alpha|$ if we assign to each $a^\mathrm{th}$ $x$-derivative  of $\tau$ the weight $a$.
The product rule together with \eqref{eq:Datau} shows, estimating higher powers of $|\ln \tilde\delta_{\nu}|^{-1}$ by $| \ln \tilde\delta_{\nu}|^{-1}$,
\begin{equation} \label{eq:estP} 
|P_{\alpha\hat\alpha  \lambda}(x)| 
\leq 
C_{k,\nu} \cdot \Omega_{\delta_{\nu}^2\eps_{\nu, i} , a_{\nu , i}} (x)^{|\hat\alpha|  - |\alpha|} \cdot | \ln \tilde\delta_{\nu}|^{-1} \, . 
\end{equation}
Note that in \eqref{eq:Errorterm1} we have $\lambda + |\hat\alpha| \leq |\alpha|$ and $\lambda \geq 1$, thus the exponent of $\Omega_{\delta_{\nu}^2\eps_{\nu, i} , a_{\nu , i}} (x)$ in \eqref{eq:estP} is negative.

Lemma~\ref{lem:taylor} implies that there is a constant $C_{K_\nu}>0$, independent of $x$, $\delta_\mu$, and $\Lambda_\mu$, such that for our $x \in B(2\eps_{\nu,i}, a_{\nu,i})\subset B(1, a_{\nu,i})$,  $0 \leq |\hat\alpha| \leq k$, $t \in [0,1]$, and $0 < \ell \leq k$ we have  
\begin{equation}
|  \mathscr{T}_{a_{\nu, i}, k-|\hat\alpha|} ( D^{\hat\alpha} \partial_t^{\ell}\Ft )  (t,x) | 
\leq 
C_{K_{\nu}}  \cdot r_{a_{\nu, i}}(x)^{k - |\hat\alpha|} \, . 
\label{eq:TaylorAbsch}
\end{equation}
Moreover, by Lemma~\ref{lem:remainder}, we can choose $\Lambda_{\nu}$ so small (depending on $\delta_\nu$ but independently of $x$) that 
\begin{equation}
|\mathscr{R}_{a_{\nu, i}, k-|\hat\alpha|}  (D^{\hat\alpha} \partial_t^{\ell}\Ft )  (t,x) | 
\leq 
( \delta_{\nu}^2 \eps_{\nu,i})^{k - |\hat\alpha|} \, . 
\label{eq:RestAbsch}
\end{equation}
With this  choice of  $\Lambda_{\nu}$ we get, using \eqref{eq:estP}, \eqref{eq:TaylorAbsch} and \eqref{eq:RestAbsch},
\begin{align} 
|(D^{\hat\alpha} \partial_t^\lambda \Ft)&(t\tau (x),x) \cdot t^\lambda \cdot P_{\alpha\hat\alpha \lambda}(x)|  \notag\\
&\leq   
C_{k,\nu} \cdot |(D^{\hat\alpha} \partial_t^\lambda \Ft)(t\tau (x),x)|\cdot \Omega_{\delta_{\nu}^2\eps_{\nu, i} , a_i} (x)^{|\hat\alpha|  - |\alpha|} \cdot | \ln \tilde\delta_{\nu}|^{-1}  \notag\\
&\leq   
C_{k,\nu} \cdot  \Big( C_{K_{\nu}} \cdot r_{a_{\nu,i}}(x)^{k - |\hat\alpha|}  +  ( \delta_{\nu}^2 \eps_{\nu, i} )^{k - |\hat \alpha|} \Big) \cdot \Omega_{\delta_{\nu}^2\eps_{\nu, i} , a_i} (x)^{|\hat\alpha|  - |\alpha|}\cdot |\ln \tilde\delta_{\nu}|^{-1} \notag\\
&\leq   
C_{k,\nu} \cdot  \Big( C_{K_{\nu}} \cdot r_{a_{\nu,i}}(x)^{k - |\hat\alpha|}  \cdot  r_{a_{\nu,i}}(x)^{ |\hat \alpha| - |\alpha|} +  ( \delta_{\nu}^2 \eps_{\nu, i} )^{k - |\hat \alpha|} \cdot ( \delta_{\nu}^2 \eps_{\nu, i})^{ | \hat \alpha| - |\alpha|}\Big) \cdot |\ln \tilde\delta_{\nu}|^{-1} \notag\\
&=  
C_{k,\nu} \cdot  \Big( C_{K_{\nu}} \cdot r_{a_{\nu,i}}(x)^{k - |\alpha|}  +  ( \delta_{\nu}^2 \eps_{\nu, i} )^{k - |\alpha|} \Big) \cdot |\ln \tilde\delta_{\nu}|^{-1} \notag\\
&\leq  
C_{k,\nu} \cdot (C_{K_{\nu}}+1)  \cdot  | \ln \tilde\delta_{\nu}|^{-1}    \, . 
\label{eq:glorious}  
\end{align}
Thus for $\tilde\delta_{\nu}>0$ sufficiently small estimate \eqref{eq:RRest} holds for $y=x$.
This imposes finitely many conditions on each $\delta_\nu$ and can therefore be arranged.
This completes the proof of Theorem~\ref{thm:FlexLem1} for $\ell=\infty$ and $\kappa = k\ge1$.

\medskip 

\uline{\emph{Step~4:}}
Now we drop the differentiability assumption in the path variable and consider the case $\ell=0$ and $\kappa = k\ge 1$.
We equip the vector bundle $X\to V$ with a Euclidean structure and a compatible $C^{\infty}$-connection $\nX$. 
We introduce a second path variable and define
\begin{equation*}
\F(s,t) := 
\begin{cases}
F(0), & \mbox{ for } t\le 0; \\
F(st), & \mbox{ for } 0\le t\le 1; \\
F(s), & \mbox{ for } t\ge 1.
\end{cases}
\end{equation*}
Then $\F\in C^0([0,1]\times \R,C^k(U,X))$.

Let $\chi\in C^\infty(\R)$ be nonnegative with $\mathrm{supp}(\chi) \subset [-1,1]$ and $\int_{-1}^1 \chi(\sigma)\, d\sigma = 1$.
We mollify $\F$ in the $t$-variable by putting, for $\delta\in(0,1]$,
\begin{equation}
\F_s^\delta(t) :=
\frac1\delta \int_\R \chi\bigg(\frac{(1+2\delta)t -\delta -\sigma}{\delta}\bigg)\F(s,\sigma)\, d\sigma.
\label{eq:Mollie}
\end{equation}
Then each $\F_s^\delta\in C^\infty([0,1], C^k(U,X))$ and $\F_s^\delta$ is also smooth in $\delta$.
Using $\frac1\delta \int_\R \chi\big(\frac{(1+2\delta)t -\delta -\sigma}{\delta}\big)\, d\sigma=1$ and the support property of $\chi$ it is straightforward to check
\begin{align*}
\F_s^\delta(0) &= F(0), \\
\F_s^\delta(1) &= F(s), \\
\F_0^\delta(t) &= F(0),
\end{align*}
for all $s,t\in [0,1]$.
Moreover, since $\F$ is uniformly continuous, we can, given $\eps>0$ and a compact subset $K\subset U$, find an $\eps'>0$ such that $|\F(s,t_1)|_K-\F(s,t_2)|_K|<\eps$ for all $s$ and all $t_1,t_2$ with $|t_1-t_2|<\eps'$.
Then
$$
\big|\F_s^\delta(t)|_K - \F(s,t)|_K\big| 
\le
\frac1\delta \int_\R \chi\bigg(\frac{(1+2\delta)t -\delta -\sigma}{\delta}\bigg)\big|\F(s,\sigma)|_K-\F(s,t)|_K\big|\, d\sigma
<
\eps
$$
provided $2\delta<\eps'$.
The same argument applies to the covariant derivatives.
Thus $\F_s^\delta(t)$ converges in $C^k(U,X)$ to $\F(s,t)$ uniformly in $s$ and $t$ as $\delta\to 0$ with respect to the weak $C^k$-topology.
In particular, $\F_s^\delta$ solves $\RR$ over $K$ for $\delta\le\delta(K)$.

We choose a locally finite cover of $U$ by relatively compact open sets $O_\nu$.
Then we can find a positive smooth function $\delta:U\to \R$ such that $\delta(v) \le \delta(\overline O_\nu)$ for all $v\in\overline O_\nu$ and all $\nu$.

We rewrite \eqref{eq:Mollie} as
\[
\F_s^\delta(t) 
=
\frac1\delta \int_\R \chi\bigg(\frac{(1+2\delta)t -\delta -\sigma}{\delta}\bigg)(\F(s,\sigma)-f_0)\, d\sigma + f_0
\]
and recall that all derivatives up to order $k-1$ of $\F(s,\sigma)-f_0$ vanish along $V_0$.
Thus all derivatives up to order $k-1$ of $\frac1\delta \int_\R \chi\bigg(\frac{(1+2\delta)t -\delta -\sigma}{\delta}\bigg)(\F(s,\sigma)-f_0)\, d\sigma$ vanish as well.
For the $k^\mathrm{th}$ derivative we obtain
\begin{align*}
\nX^{(k)} \F_s^{\delta(\cdot)}(t) 
&= 
\frac1\delta \int_\R \chi\bigg(\frac{(1+2\delta)t -\delta -\sigma}{\delta}\bigg)\nX^{(k)}(\F(s,\sigma)-f_0)\, d\sigma + \nX^{(k)}f_0 \\
&=
\frac1\delta \int_\R \chi\bigg(\frac{(1+2\delta)t -\delta -\sigma}{\delta}\bigg)\nX^{(k)}\F(s,\sigma)\, d\sigma .
\end{align*}
No derivatives of $\delta$ occur in this formula.
In particular, $v\mapsto \F_s^{\delta(v)}(t)(v)$ solves $\RR$ along $V_0$.
By shrinking $U$ if necessary, $v\mapsto \F_s^{\delta(v)}(t)(v)$ solves $\RR$ also over $U$.

We can now apply the results obtained in Step~3 to each $\F_s^{\delta(\cdot)}$ and get $f_s\in C^{\infty}([0,1],C^k(V  ,X))$ such that for all $s,t\in[0,1]$
\begin{enumerate}[\myicon]
\item 
each $f_s(t)$ is a section of $X$ solving $\RR$;
\item
$f_s(0)=f_0$;
\item
$f_s(t)|_{U_0} = \F_s^{\delta(\cdot)}(t)|_{U_0}$;
\item
$f_s(t)|_{V\setminus U} = f_0|_{V\setminus U}$.
\end{enumerate}
Furthermore, over $U$ the section $f_s(t)$ is of the form $\widetilde{f_s}(t,v)=\widetilde{\F_s^{\delta(v)}}(t\tau(v),v)$ with a $\tau$ as in Step~3. 
Note that here we can in fact choose $\tau$  and $U_0$ independently of~$s$.

We set $f(s) := f_s(1)$. 
Then $f \in C^0([0,1], C^k(V,X))$ and 
\begin{enumerate}[\myicon]
\item 
$f(s)$ is a section of $X$ solving $\RR$;
\item
Over $U$ we have $f(0) = f_0(1) = \F_0^{\delta(\cdot)}(\tau(\cdot)) = F(0) = f_0$;
\item
Over $U_0$ we have $f(s) = f_s(1) = \F_s^{\delta(\cdot)}(1) = F(s)$;
\item
Over $V\setminus U$ we have $f(s) = f_s(1) = f_0$.
\end{enumerate}

\medskip

\uline{\emph{Step~5:}}
Finally, if $F\in C^\ell([0,1],C^\kappa(U,X))$ then $\F_s^{\delta(v)}(t)(v)$ is $C^\ell$ in $s$, smooth in $t$ and $C^\kappa$ in~$v$.
Hence $f\in C^\ell([0,1],C^\kappa(U,X))$.
This concludes the proof of Theorem~\ref{thm:FlexLem1}. 
\end{proof}

\begin{rem}
One cannot drop the assumption that $V_0$ is a closed subset even if it is a smooth embedded submanifold.
For example, we may choose $V=\R^2$ and $V_0 = \{(x,y)\in\R^2 \mid x=0, y>0\}$.
As an open neighborhood of $V_0$ we choose $U=\{(x,y)\in\R^2 \mid y>0\}$.
The fiber bundle $X$ is the trivial real line bundle; so our sections are just real-valued functions.
We consider the case $k=1$ and the relation $\RR=J^1X$.
In other words, the relation does not impose any restrictions on our functions.

Let $f_0\equiv0$ on $V$ and $\Ft(t,x,y)=tx\sin(1/y)$ on $U$.
The assumptions of Theorem~\ref{thm:FlexLem1} are now satisfied (except for closedness of $V_0$) but for $t>0$ the derivative $\frac{\partial F}{\partial x}$ does not have a limit as $y\to 0$. 
Thus no restriction of $F$ to $[0,1]\times U_0$ for any neighborhood $U_0$ of $V_0$ can be extended as a $C^1$-map to $[0,1]\times V$.
\end{rem}

\begin{rem} 
The assumption $j^{k-1}F(t)=j^{k-1}f_0$ along $V_0$ cannot be dropped either. 
For example, let $V = \R$ and $V_0 = \{ -1, +1\}$. 
We still work with real-valued functions and $k=1$. 
Let the relation $\RR$ not impose any restrictions on $0$-jets and force first derivatives to lie in the interval $(-1, 1)$. 
Let $f_0\equiv 0$,  $U = \R \setminus \{0\}$ and set $\Ft(t,x) = 10 \cdot t$ for $x>0$ and $\Ft(t,x) = - 10 \cdot t$ for $x<0$. 
With these choices a function $f$ as in Definition~\ref{def:GromovFlexible} does not exist. 
\end{rem}

\begin{rem}\label{rem:Values}
If $F$ is sufficiently regular in the path variable, more precisely if $F\in C^{\ell}([0,1],C^k(U,X))$ with $\ell \geq k$, then, as in the proof of Theorem~\ref{thm:FlexLem1}, we can use the ansatz $f(t)(v)=\Ft(t\tau(v),v)$ for $v 
\in U$ to obtain $f \in C^{\ell - k}([0,1], C^k(V,X))$. 
With this definition the deformation $f$ takes only values that are taken by $f_0$ and $F$.
This means that for the values (but not their derivatives) we can also preserve nonopen relations.
For instance, if our sections are real-valued functions and $f_0\ge 0$ and $F\ge 0$ holds then we have also $f\ge 0$.
\end{rem}

We get the following family version of Theorem~\ref{thm:FlexLem1}.

\begin{addendum} \label{add:family} 
Let $K$ be a compact Hausdorff space and let  $k\in \N_0$. 
Let $f_0\in C^0(K,C^k(V,X))$ and let $F\in C^0(K,C^0([0,1],C^k(U,X)))$ such that $f_0(\xi)$ and $F(\xi)$ fall in Setting~\ref{setting} for each $\xi\in K$. 

Then parametrized local flexibility holds: 
There exists $f\in C^0(K, C^0([0,1], C^k(V,X)))$ such that $f(\xi)$ enjoys the properties of Definition~\ref{def:GromovFlexible} for each $\xi$ with $U_0$ independent of $\xi$. 

Moreover, let $\ell\in\{0,1,\ldots,\infty\}$, $\kappa\in\{k,k+1,\ldots,\infty\}$. 
Let $f_0\in C^0(K,C^\kappa(V,X))$ and $F\in C^0(K,C^\ell([0,1],C^\kappa(U,X)))$.
Then we can assume in addition that $f \in C^0(K, C^\ell([0,1], C^\kappa(V,X)))$.

Finally, if $\xi \in K$ is such that the deformation $F(\xi)$ is constant in the path variable, then in all the previous cases $f(\xi)$ can be assumed to be constant in the path variable as well. 
\end{addendum}

\begin{proof} We concentrate on the case $k \geq 1$ and leave the case $k = 0$ to the reader. 

If $F(\xi)  \in C^{\infty}([0,1], C^k(U,X))$ as in Step~3 of the proof of Theorem~\ref{thm:FlexLem1} then the bounds on the $\Lambda_\nu$ and $\delta_{\nu}$ depend on bounds on derivatives of $F$.
Thus they can be chosen independently of $\xi\in K$, by compactness of $K$.
Therefore the cutoff function $\tau$ in \eqref{eq:deftau} can be chosen independently of $\xi$.
Hence $f$ depends continuously on $\xi$.

If $F(\xi) \in C^0([0,1], C^k(U,X)) $  then the function $\delta$ in Step~4 of the proof can be chosen independently of $\xi$, again by compactness of $K$.
The mollifying procedure in \eqref{eq:Mollie} yields a continuous map $C^0([0,1], C^0(\R, C^k(U,X))) \to C^0([0,1], C^{\infty} (\R, C^k(U,X)))$.
Then Step~3 applies.
A similar argument applies to the case $F\in C^0(K,C^\ell([0,1],C^\kappa(U,X)))$ for more general $\ell$ and $\kappa$. 

The last assertion follows directly from the definition of $f(\xi)(t)$ in \eqref{eq:deff} if $F(\xi) \in C^{\infty}([0,1], C^k(U,X))$. 
In the remaining cases we observe that the mollified function $\F_s^{\delta}(t)$ in \eqref{eq:Mollie} is constant in $s$ and $t$ if the original function $F(t)$ is constant in $t$. 
 \end{proof}

This can be reformulated in homotopy theoretic language. 
Let $\phi$ be a fixed $C^k$-germ of sections of $X$ around $V_0$ solving $\RR$. 
We say that a $C^k$-section of $X$ over some open neighborhood of $V_0$  is {\em $\phi$-compatible} if it has the same $(k-1)$-jet along $V_0$ as $\phi$. 
Now consider 
\begin{enumerate}[\myicon] 
   \item the space $E$ of all $\phi$-compatible $C^k$-solutions of $\RR$ over $V$, 
   \item the space $E_0$ of all $\phi$-compatible $C^k$-germs of solutions of $\RR$ around $V_0$. 
\end{enumerate}
The space $E_0$ is equipped with the quasi-$C^k$-topology induced by the directed system $C^k(U, X)$, $V_0 \subset U \subset V$ open. 
This means  that a continuous map $K \to E_0$ for compact $K$ is represented by a continuous map $K \to C^k(U, X)$ for some open $V_0 \subset U \subset V$ with image in the $\phi$-compatible solutions of $\RR$ over $U$.
 
Applying Addendum~\ref{add:family} we now have the following assertion.

\begin{cor} \label{cor:homotop} 
The restriction map $E \to E_0$ has the homotopy lifting property with respect to all compact Hausdorff spaces. 
In particular, it is a Serre fibration.
\hfill $\Box$
\end{cor}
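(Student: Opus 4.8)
The plan is to deduce Corollary~\ref{cor:homotop} directly from the parametrized local flexibility established in Addendum~\ref{add:family}, by unwinding the definitions of $E$, $E_0$, and of the quasi-$C^k$-topology on $E_0$. Recall that a map $p : E \to E_0$ has the homotopy lifting property with respect to a space $K$ if for every continuous $h : K \to E$ and every homotopy $H : K \times [0,1] \to E_0$ with $H(\cdot, 0) = p \circ h$ there is a lift $\tilde H : K \times [0,1] \to E$ with $\tilde H(\cdot, 0) = h$ and $p \circ \tilde H = H$. So I must start from such data for $K$ a compact Hausdorff space.

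First I would translate the data into the language of Setting~\ref{setting}. The map $h : K \to E$ is a family of $\phi$-compatible $C^k$-solutions of $\RR$ over $V$; this is our $f_0 \in C^0(K, C^k(V,X))$, i.e.\ $f_0(\xi) := h(\xi)$. By the definition of the quasi-$C^k$-topology on $E_0$, the homotopy $H : K \times [0,1] \to E_0$ is represented by a continuous map into $C^k(U,X)$ for a single open neighborhood $V_0 \subset U \subset V$, whose values are $\phi$-compatible solutions of $\RR$ over $U$; this gives $F \in C^0(K, C^0([0,1], C^k(U,X)))$ with $F(\xi)(t) := H(\xi, t)$. The compatibility condition $H(\cdot, 0) = p \circ h$ says exactly that $f_0(\xi)|_U = F(\xi)(0)$, and $\phi$-compatibility of both sides gives $j^{k-1}F(\xi)(t)|_{V_0} = j^{k-1}\phi|_{V_0} = j^{k-1}f_0(\xi)|_{V_0}$ for all $t$. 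Thus $(f_0(\xi), F(\xi))$ falls into Setting~\ref{setting} for every $\xi \in K$, and Addendum~\ref{add:family} applies.

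Next I would invoke Addendum~\ref{add:family} to obtain $f \in C^0(K, C^0([0,1], C^k(V,X)))$ together with an open set $U_0$, independent of $\xi$, with $V_0 \subset U_0 \subset U$, such that for each $\xi$: each $f(\xi)(t)$ solves $\RR$ over $V$, $f(\xi)(0) = f_0(\xi)$, $f(\xi)(t)|_{U_0} = F(\xi)(t)|_{U_0}$, and $f(\xi)(t)|_{V \setminus U} = f_0(\xi)|_{V \setminus U}$. I then define $\tilde H : K \times [0,1] \to E$ by $\tilde H(\xi, t) := f(\xi)(t)$. I must check $\tilde H$ lands in $E$: each $f(\xi)(t)$ is a $C^k$-solution of $\RR$ over $V$, and it is $\phi$-compatible because along $V_0 \subset U_0$ it agrees with $F(\xi)(t)$, which is $\phi$-compatible by hypothesis; continuity in $(\xi,t)$ is part of the conclusion of the Addendum. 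Finally, $\tilde H(\xi, 0) = f(\xi)(0) = f_0(\xi) = h(\xi)$, so $\tilde H$ lifts $h$; and $p \circ \tilde H(\xi, t) = f(\xi)(t)|_{U_0}$ as a germ equals $F(\xi)(t)|_{U_0} = H(\xi, t)$ in $E_0$ since $U_0$ is an admissible neighborhood witnessing equality in the directed system. Hence $p \circ \tilde H = H$, establishing the homotopy lifting property. Since $E \to E_0$ has HLP with respect to all compact Hausdorff spaces, in particular with respect to all disks $D^n$, it is a Serre fibration.

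The main (and essentially only) obstacle is the bookkeeping around the quasi-$C^k$-topology: one must be careful that a homotopy $K \times [0,1] \to E_0$ really is represented on a \emph{single} neighborhood $U$ (this uses compactness of $K \times [0,1]$ together with the definition of the directed-limit topology as quoted in the paragraph preceding the corollary), and that the output germ $f(\xi)(t)|_{U_0}$ equals the input germ $F(\xi)(t)|_{U_0}$ \emph{as an element of $E_0$}, i.e.\ that agreement on the fixed open set $U_0$ suffices to identify germs in the directed system. Both points are immediate from the definitions recalled in the excerpt, so once the data of the lifting problem is correctly reformulated, the rest is a direct application of Addendum~\ref{add:family} with no further analytic input.
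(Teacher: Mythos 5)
Your argument is correct and is exactly the intended application of Addendum~\ref{add:family}: the paper ends Corollary~\ref{cor:homotop} with a $\Box$ precisely because it is a direct unwinding of the definition of homotopy lifting in the quasi-$C^k$-topology, as you carry out. The only hidden technicality --- which the paper leaves equally implicit --- is that the compatibility $H(\cdot,0)=p\circ h$ must be realized by a representative $F$ on a single neighborhood $U$ that literally agrees with $f_0$ at $t=0$ (as required by Setting~\ref{setting}), not merely germ-wise at each $\xi$; your closing remark about the directed-system bookkeeping correctly identifies this as the crux.
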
 

This formulation provides a link of local flexibility to other h-principle concepts, such as  flexibility and microflexibility, compare \cite[Section~1.4.2~(B')]{Gromov86}.

\section{Applications}

The following applications from different mathematical contexts illustrate situations in which local flexibility applies naturally.

\subsection{Deforming hypersurfaces} \label{sec:defhypersurf} 

Let $V$ be an $n$-dimensional manifold and $f_0:V \looparrowright \R^{n+1}$ an immersion.
For a constant $\mu>0$ we call $f_0$ \emph{$\mu$-convex} if all  eigenvalues of the Weingarten map (the principal curvatures) of $f_0(V)$ w.r.t.\ suitable choice of unit normal are $\ge\mu$ everywhere.
If $f_0$ is a $\mu$-convex embedding then $f_0(V)\subset B$ for any closed ball $B$ of radius $\frac1\mu$ whose boundary touches $f_0(V)$ tangentially at a point $p$ and is curved in the same direction, i.e.\ the unit normals of $f_0(V)$ and $\partial B$ at $p$ coincide when chosen such that both Weingarten maps are positive, see \cite[Section~6.3]{E1986}.

\begin{center}
\includegraphics[scale=0.5]{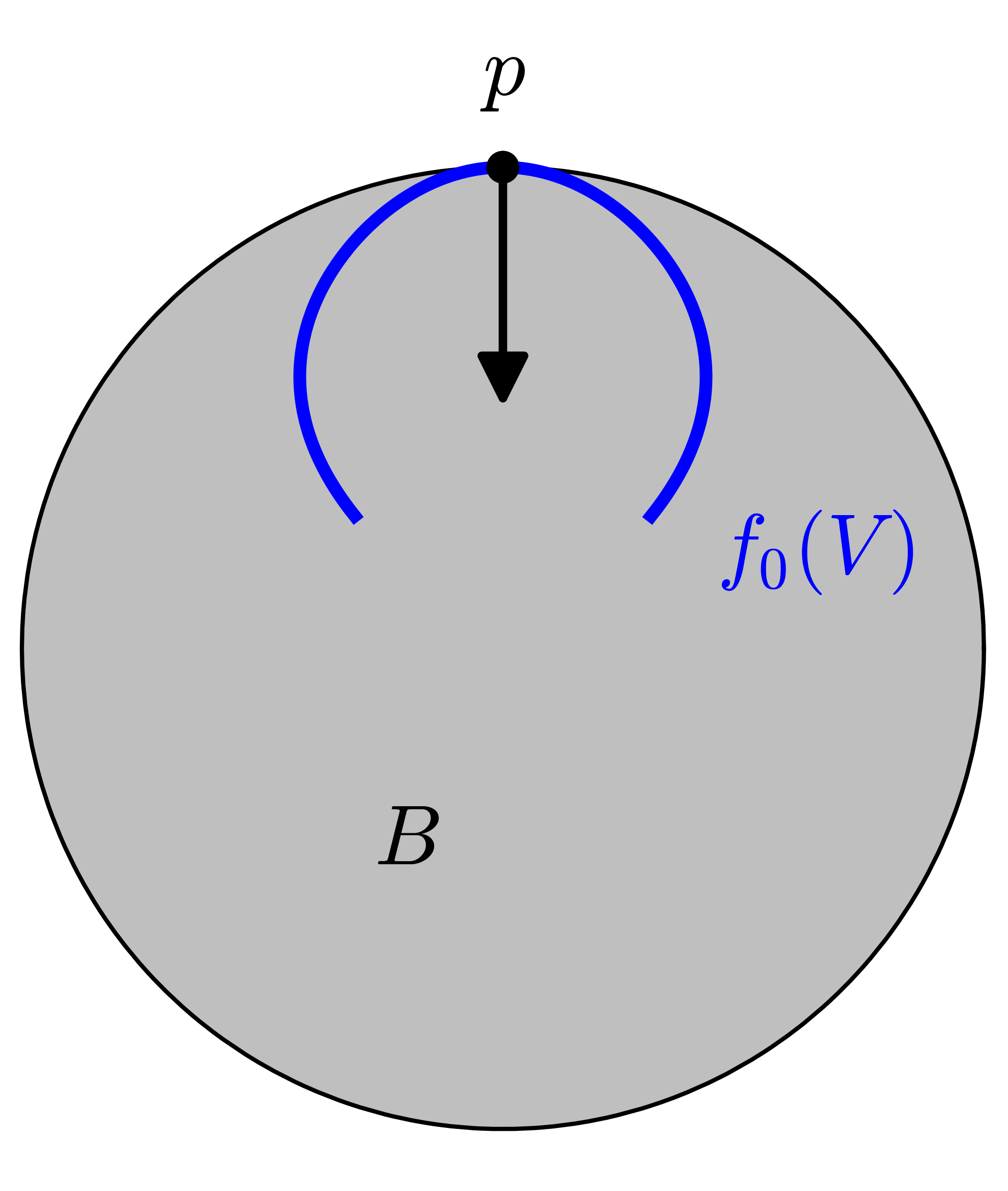}

\Abb{Image $f_0(V)$ is $\mu$-convex at $p$}
\vspace{2mm}
\end{center}

Now let $f_0:S^n \hookrightarrow \R^{n+1}$ be the standard embedding.
Then $f_0$ is $1$-convex and rigid in the following sense:
Given a point $p\in S^n$ we cannot find an embedding $f_1$ such that $f_1=f_0$ on the opposite hemisphere $S^n_{p,-}:=\{v\in S^n \mid \langle v,p \rangle\le 0\}$, $f_1$ is $1$-convex everywhere and $\mu$-convex near $p$ for some $\mu>1$.

Namely, assume such an $f_1$ exists.
By $1$-convexity and since $S^n_{p,-}$ is contained in $f_1(S^n)$ we have $f_1(S^n) \subset \bar B_1(0)$.
By $\mu$-convexity near $f_1(p)$, $f_1(S^n)$ contains points in the interior of $\bar B_1(0)$.
Let $q\in f_1(S^n)$ be such a point.
Again by $1$-convexity, applied at $q$, $f_1(S^n)\subset \bar B_1(m)$ for some $m\neq0$.

\begin{center}
\includegraphics[scale=0.5]{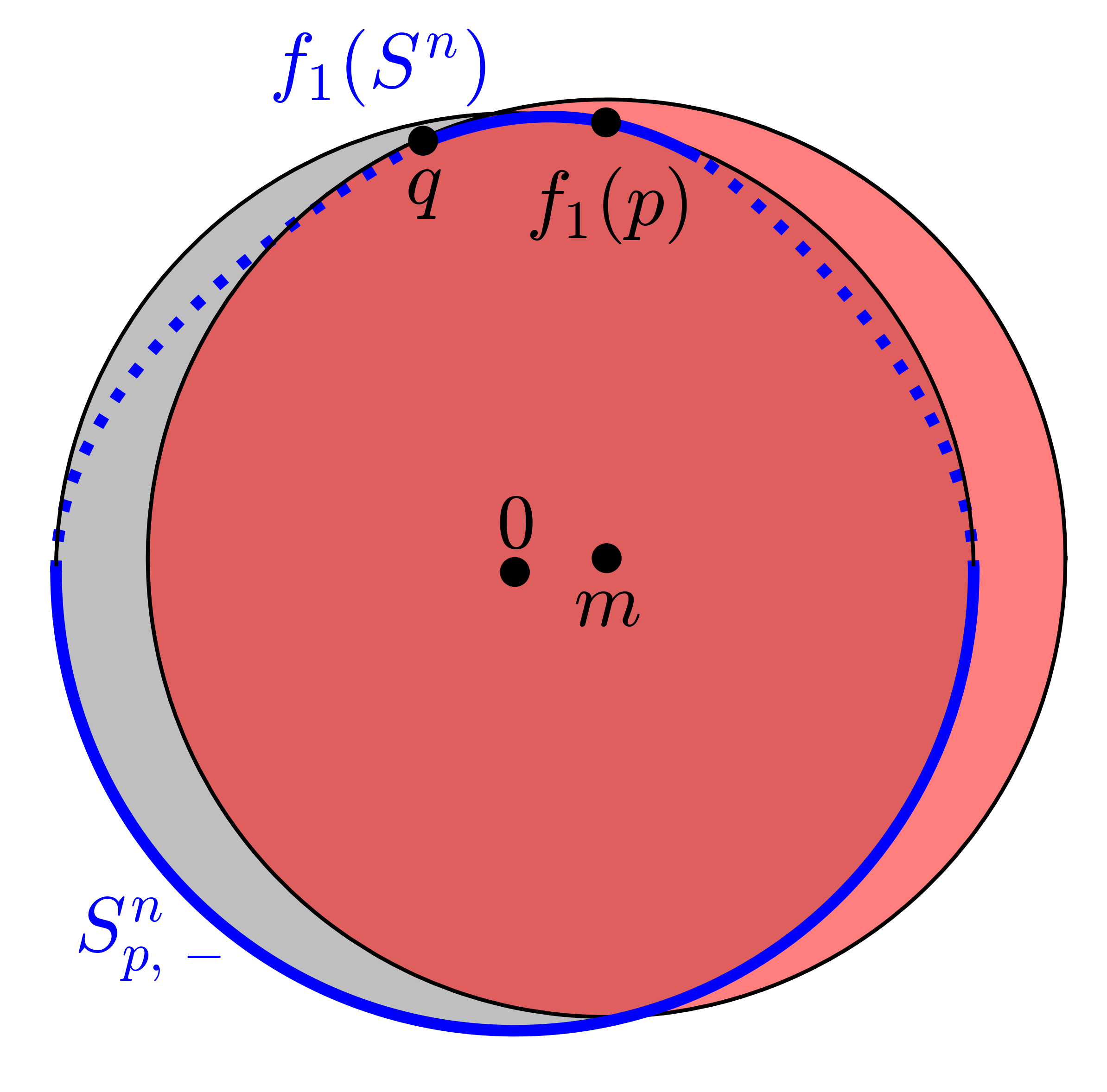}

\Abb{Image $f_1(S^n)$ is $\mu$-convex at $f_1(p)$}
\vspace{2mm}
\end{center}

On the other hand, $S^n_{p,-} \subset f_1(S^n)\subset \bar B_1(m)$, which is possible only if $m=0$.
We have arrived at a contradiction.

Now we relax the conditions.
Let $\eps>0$.
We look for an embedding $f_1:S^n\hookrightarrow \R^{n+1}$ such that $f_1=f_0$ on $S^n_{p,-}$, $f_1$ is $(1-\eps)$-convex everywhere and $\mu$-convex near $p$ for any given $\mu>1$.
Such an $f_1$ actually exists.

To see this we put $V=S^n$, $V_0=\{p\}$ and $U=S^n\setminus S^n_{p,-}$.
Being an immersion with principal curvatures $>1-\eps$ imposes an open partial differential relation $\RR$ of order $2$ on $C^\infty(V,\R^{n+1})$.
Consider a smooth $1$-parameter family of smooth diffeomorphisms $\Psi(t):U\to U$ such that $\Psi(t)(p)=p$ and $d\Psi(t)|_p=\frac{\mu}{\mu-(\mu-1)t}\id_{T_pU}$, $t\in [0,1]$.
We set 
$$
\tilde F(t,v):= \bigg(1-\frac{\mu-1}{\mu}t\bigg)\Psi(t)(v)+\frac{\mu-1}{\mu}tp .
$$
Then each $F(t)$ is a $\frac{\mu}{\mu-(\mu-1)t}$-convex embedding $U\hookrightarrow \R^{n+1}$ satisfying $F(t)(p)=p$ and $dF(t)|_p=\id_{T_pU}$ for all $t$.
Hence the $1$-jet of $F(t)$ is constant at $V_0=\{p\}$.

We apply Theorem~\ref{thm:FlexLem1} and obtain $f\in C^\infty([0,1],C^\infty(S^n,\R^{n+1}))$ such that each $f(t)$ is a $(1-\eps)$-convex immersion,   $f(t)=f_0$ on $S^n_{p,-}$ and $f(t)=F(t)$ near $p$.
In particular, $f_1 := f(1)$ is $\mu$-convex near $p$.

It remains to see that $f_1$ is injective and hence an embedding since $V$ is compact.
By Remark~\ref{rem:Values}, $f_1$ is of the form $f_1(v)=\tilde F(\tau(v),v)$ on $U$ for some function $\tau:U\to[0,1]$.
Now injectivity of $f_1$ follows from injectivity of $\tilde F|_{[0,1]\times (U\setminus\{p\})}$.

\subsection{Deforming differential forms}

Let $V$ be a smooth manifold and let $V_0 \subset V$ be a smooth submanifold, which is closed as a subset. 
For $p \geq 1$ we consider the exterior form bundle  $\Lambda^p (T^*V) \to V$ and fix an open subset   $\QQ \subset \Lambda^p (T^*V)$. 
Let $\omega_0  \in C^1( V , \Lambda^p (T^*V)) $ be a closed differential form of degree $p$ solving $\QQ$. 

Let $U$ be an open neighborhood of $V_0$ in $V$ and let $\Omega  \in C^{0} ([0,1], C^1(U,\Lambda^p (T^*V)))$ be a path of closed forms solving  $\QQ$ with $\omega_0  |_{U} = \Omega(0)$. 
W.l.o.g.\ we can assume that $U$ is a tubular neighborhood, shrinking it if necessary.
Furthermore, we assume that $i_0^*\Omega(t)$ is constant in $t$ where $i_0:V_0\hookrightarrow U$ is the embedding.
Note that this is a weaker condition than the restriction of $\Omega(t)$ to $V_0$ being constant in $t$. 

Now $\Omega(t)-\Omega(0)$ is a family of closed forms satisfying $i_0^*(\Omega(t)-\Omega(0))=0$.
Let $\rho\in C^\infty([0,1],C^\infty(U,U))$ be a retraction of $U$ onto $V_0$, i.e.\ $\rho(1)=\id_U$, $\rho(s)|_{V_0}=\id_{V_0}$ for all $s\in[0,1]$ and $\rho(0)(U)=V_0$.
We obtain $F \in C^{0}([0,1] , C^1(U,\Lambda^{p-1}(T^*V)))$ by setting
$$
F(t)(\xi_1,\ldots,\xi_{p-1}) := \int_0^1 (\Omega(t)-\Omega(0))\big(\tfrac{d\rho}{ds}(s),\rho(s)_*\xi_1,\ldots,\rho(s)_*\xi_{p-1}\big)\, ds 
$$
where $\xi_1,\ldots,\xi_{p-1}\in T_uU$ and $u\in U$.
Then $F$ has the following properties:
\begin{enumerate} [\myicon] 
\item $F(0) \equiv 0$; 
\item $F(t)|_{V_0} \equiv 0$ for all $t\in[0,1]$;
\item $\Omega(t) = \Omega(0) + d F(t)$ for all $t\in[0,1]$;
\item If $\ell \geq 0$, $\kappa \geq 1$, and $\Omega\in C^\ell ([0,1],C^\kappa(U,\Lambda^p(T^*V)))$, then $F \in C^{\ell }([0,1] , C^\kappa(U,\Lambda^{p-1}(T^*V)))$.
\end{enumerate} 
Only the third property requires a small computation, compare e.g.\ \cite[Prop.~6.8]{Silva}.

We set $X := \Lambda^{p-1}(T^* V) \to V$ and $k := 1$.
The condition $\omega_0 + d\eta \in \QQ$ imposes an open relation on the $1$-jet of the $(p-1)$-form $\eta$.
Denote this first-order relation on $\Lambda^{p-1}(T^*V)$ by $\RR$.
Now we apply Theorem~\ref{thm:FlexLem1} to $f_0=0$, $F(t)$ and $\RR$ and obtain $f\in C^0([0,1],C^1(V,\Lambda^{p-1}(T^*V)))$ such that $f(0)=f_0=0$ and $f(t)=F(t)$ on a smaller neighborhood $U_0$ of $V_0$.

For simplicity, let us assume that $\omega_0$ and $\tilde\Omega$ are smooth.
Then $\tilde F$ is smooth and so is $\tilde f$.
We obtain a smooth family of closed smooth $p$-forms
\[
     \omega(t) := \omega_0 + df(t) 
\]
solving $\QQ$, coinciding with $\Omega(t)$ on $U_0$, and $\Omega(t)=\omega_0$ outside $U$.
We summarize:

\begin{prop}\label{prop:closedform}
Let $V$ be a smooth manifold, $V_0\subset V$ a submanifold, closed as a subset.
Let $U$ be an open neighborhood of $V_0$ in $V$.
Let $\QQ\subset\Lambda^p (T^*V)$ be an open subset.

Let $\omega_0$ be a smooth closed differential $p$-form on $V$ solving $\QQ$, $p\ge1$.
Let $\Omega  \in C^{\infty} ([0,1], C^\infty(U,\Lambda^p (T^*V)))$ such that $\Omega(t)$ is closed and solves $\QQ$, $\Omega(0)=\omega_0$ on $U$ and $i_0^*\Omega(t)$ is constant in $t$. 

Then there exists a smaller neighborhood $U_0$ of $V_0$ and $\omega\in C^{\infty} ([0,1], C^\infty(V,\Lambda^p (T^*V)))$ such that each $\omega(t)$ is closed and solves $\QQ$, $\omega(0)=\omega_0$, $\omega(t)=\Omega(t)$ on $U_0$ and $\omega(t)=\omega_0$ outside $U$.
Moreover, the de Rham cohomology class of $\omega(t)$ is independent of $t$.
\hfill$\Box$
\end{prop}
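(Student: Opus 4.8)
The plan is to reduce Proposition~\ref{prop:closedform} to a direct application of Theorem~\ref{thm:FlexLem1}, exactly along the lines sketched in the discussion preceding the statement, and then verify the cohomological claim as a corollary of the particular form of the deformation produced. The main work is therefore \emph{conceptual bookkeeping}: organizing the given data $(\omega_0, \Omega)$ on differential forms into data $(f_0, F)$ on the bundle $X := \Lambda^{p-1}(T^*V)$ that falls into Setting~\ref{setting}, together with an open relation $\RR$ on $J^1 X$.

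First I would shrink $U$ to a tubular neighborhood of $V_0$, so that a smooth retraction $\rho \in C^\infty([0,1], C^\infty(U,U))$ onto $V_0$ with $\rho(1) = \id_U$, $\rho(0)(U) = V_0$, and $\rho(s)|_{V_0} = \id_{V_0}$ exists. Using $\rho$ I would build the homotopy operator $F(t)$ by the displayed fiber-integral formula, and check the three bulleted properties: $F(0) \equiv 0$ and $F(t)|_{V_0} \equiv 0$ follow because $\Omega(0) = \omega_0$ and $\rho(0)$ has image in $V_0$ (so the integrand degenerates), while $\Omega(t) = \omega_0 + dF(t)$ is the standard homotopy formula $\mathrm{id} - \rho(0)^* = d\circ h + h\circ d$ applied to the closed form $\Omega(t) - \Omega(0)$ — the one computation the proof needs, and I would just cite \cite[Prop.~6.8]{Silva}. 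The regularity statement (fourth bullet) is immediate from differentiating under the integral sign, since $\rho$ is smooth and the loss of one derivative never occurs (integration, not differentiation, in the form variable).

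Next I define the order-one open relation $\RR \subset J^1 X = J^1\Lambda^{p-1}(T^*V)$ by declaring a $1$-jet $j^1\eta(v)$ to lie in $\RR$ iff $(\omega_0 + d\eta)(v) \in \QQ$; this depends only on $\eta(v)$ and its first derivatives, so it is a genuine first-order relation, and it is open because $\QQ \subset \Lambda^p(T^*V)$ is open and $\eta \mapsto \omega_0 + d\eta$ is a continuous (indeed linear first-order) operator. By construction $f_0 := 0$ solves $\RR$ (since $\omega_0$ solves $\QQ$), each $F(t)$ solves $\RR$ over $U$ (since $\omega_0 + dF(t) = \Omega(t)$ solves $\QQ$), $f_0|_U = F(0) = 0$, and $j^0 F(t)|_{V_0} = 0 = j^0 f_0|_{V_0}$ for all $t$ — the last because of the second bullet; note $k=1$ so only $0$-jets must agree along $V_0$. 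Thus we are in Setting~\ref{setting} with $k=1$, and Theorem~\ref{thm:FlexLem1} (with $\kappa = \ell = \infty$, using that $\omega_0$ and $\tilde\Omega$, hence $\tilde F$, are smooth) yields an open neighborhood $U_0$ of $V_0$ and $f \in C^\infty([0,1], C^\infty(V, \Lambda^{p-1}(T^*V)))$ with $f(0) = 0$, $f(t)|_{U_0} = F(t)|_{U_0}$, and $f(t)|_{V\setminus U} = 0$.

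Finally I set $\omega(t) := \omega_0 + df(t)$. This is a smooth family of closed $p$-forms (closed because exact forms are closed), it solves $\QQ$ because $f(t)$ solves $\RR$, it equals $\omega_0 + dF(t) = \Omega(t)$ on $U_0$, and it equals $\omega_0 + d0 = \omega_0$ on $V \setminus U$. The cohomology statement is then transparent: $\omega(t) - \omega_0 = df(t)$ is exact for every $t$, so $[\omega(t)] = [\omega_0]$ in de Rham cohomology, independent of $t$. I expect no serious obstacle here; the only point requiring a line of justification is the homotopy formula $\Omega(t) = \Omega(0) + dF(t)$, which is classical, and the verification that $\RR$ is well-defined and open, which is routine since $d$ is a first-order operator.
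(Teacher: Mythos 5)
Your argument is essentially the paper's own proof: construct $F(t)$ as the homotopy primitive of $\Omega(t)-\Omega(0)$ coming from a tubular retraction of $U$ onto $V_0$, set $X=\Lambda^{p-1}(T^*V)$, $k=1$, with $\RR$ given by $\omega_0+d\eta\in\QQ$, apply Theorem~\ref{thm:FlexLem1}, and put $\omega(t)=\omega_0+df(t)$; the cohomology claim then follows from exactness of $\omega(t)-\omega_0$. One detail to get right in the write-up: $F(t)|_{V_0}\equiv 0$ holds because for $v\in V_0$ one has $\rho(s)(v)=v$ for all $s$, so $\tfrac{d\rho}{ds}(s)(v)=0$ and the integrand vanishes pointwise (not because ``$\rho(0)$ has image in $V_0$''); the hypothesis that $i_0^*\Omega(t)$ is constant in $t$ is used at a different place, namely to guarantee $\rho(0)^*(\Omega(t)-\Omega(0))=0$, which is what makes the homotopy formula $\mathrm{id}^*-\rho(0)^*=d\circ h+h\circ d$ applied to the closed form $\Omega(t)-\Omega(0)$ reduce to $\Omega(t)=\Omega(0)+dF(t)$.
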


The following table lists some geometric examples in which Proposition~\ref{prop:closedform} applies:

\renewcommand{\arraystretch}{1.2}
\begin{center}
\begin{tabulary}{\textwidth}{|C|C|C|C|}
\hline
$V$ & form degree~$p$ & condition $\QQ$ & resulting geometric structure \\
\hline
\hline
$n$-manifold & $1$ & nonvanishing &  codimension-$1$-foliation \\
\hline
$2n$-manifold & $2$ & $\omega^n\neq0$ & symplectic structure\\
\hline
$7$-manifold & $3$ & definite & closed $G_2$-structure \\
\hline
\end{tabulary}

\smallskip
\Tab{Geometric structures defined by a closed form satisfying an open condition}
\end{center}
\renewcommand{\arraystretch}{1}

The second example is a well-known consequence of the relative Moser lemma, see \cite[Theorem 7.4]{Silva}.
A similar argument can be applied to  the first example, where we use the one-to-one correspondence (after the choice of a Riemannian metric on $V$) of nonvanishing $1$-forms and nonvanishing vector fields in order to solve the Moser equation. 
For background on the $G_2$-example we refer to~\cite[Sections~3.1 and 4.6]{Bryant}. 
In this case a Moser type argument cannot be applied because $G_2$-structures induce Riemannian metrics which have local invariants.

Dropping the closedness conditions on the forms, Theorem~\ref{thm:FlexLem1} can be applied directly to $\Omega$ for $k = 0$, without passing to a family  $F(t)$ of primitives of $\Omega(t) - \Omega(0)$. 
This can be used to extend local deformations of contact forms, for instance.

\subsection{Prescribing the lapse function}

This application deals with Lorentzian geometry.
For a general introduction to and standard notation in this field see e.g.~\cite{BEE96}.
Let $V$ be a time-oriented globally hyperbolic Lorentzian manifold.
Then $V$ is isometric to $\R\times\Sigma$ with metric 
\begin{equation}
g=-N^2dT^2+g_T
\label{eq:BSMetric}
\end{equation}
where $N:V\to\R$ is smooth and positive, $T:V\to\R$ is smooth with past-directed timelike gradient such that each level $\{T_0\}\times\Sigma$ is a Cauchy hypersurface, see \cite[Thm.~1.1]{BS2005}.
The levels are then automatically closed, smooth, spacelike hypersurfaces.
Here $g_T$ is a smooth $1$-parameter family of Riemannian metrics on the levels.
We will call $N$ the \emph{lapse function} and $T$ the \emph{Cauchy time function}.
A simple computation shows $g(\grad T,\grad T)=-N^{-2}$.

In \cite[Thm.~1.2]{BS2006} Bernal and S\'anchez show that one can prescribe the Cauchy hypersurface.
More precisely, let $V_0$ be a smooth spacelike Cauchy hypersurface.
Then the Cauchy time function $T$ can be chosen in such a way that $V_0$ is one of its levels.

Using Theorem~\ref{thm:FlexLem1} we will now show that \emph{one can also prescribe the lapse function along $V_0$}.

Let $\check N:V_0\to\R$ be smooth and positive.
Let $\hat T$ be a smooth function defined on a neighborhood $U$ of $V_0$ which coincides with $T$ on $V_0$ and such that $\grad \hat T = \check N^{-1} \cdot \nu$ where $\nu$ is the past-directed timelike unit normal field along $V_0$.
By shrinking $U$ if necessary we can arrange that the gradient of $\hat T$ is past-directed timelike on all of $U$ and that there are smooth spacelike Cauchy hypersurfaces $\Sigma_-$ and $\Sigma_+$ in $V$ which lie in the causal past and future of $U$, respectively.

Now put $f_0=T$ and $\Ft(t,v) = t\hat T(v) + (1-t)T(v)$.
Since the cone of past-directed timelike tangent vectors is convex, the function $F(t,\cdot)$ has a past-directed timelike gradient field on $U$ for each $t\in [0,1]$.

Having a past-directed timelike gradient field imposes an open first order differential relation on functions on $V$.
We apply Theorem~\ref{thm:FlexLem1} with $k=1$ and obtain a smooth function $\ft:[0,1]\times V\to\R$ such that $f(0) = f_0 = T$, each $f(t)$ has past-directed timelike gradient field, coincides with $f_0$ outside $U$ and coincides with $F(t)$ on a smaller neighborhood of $V_0$.
\bigskip

\noindent
\emph{Claim:}
Each $f(t)$ is a Cauchy time function, i.e.\ its nonempty level sets are Cauchy hypersurfaces.

\begin{proof}
Fix $t$ and write $h=f(t)$ for brevity.
Let $c:(0,1)\to V$ be an inextendible future-directed timelike curve.
Since the gradient of $h$ is timelike past-directed and the velocity vector of $c$ is timelike future-directed the function $h$ increases strictly along $c$.
Thus each level of $h$ is hit at most once by $c$.

Moreover, $c$ intersects $\Sigma_-$ and $\Sigma_+$ at points $c(s_-)$ and $c(s_+)$, $0<s_-<s_+<1$, as $\Sigma_\pm$ are Cauchy hypersurfaces.
Thus the level sets of $h$ for values in $[h(c(s_-)),h(c(s_+))]$ intersect $c$ as well.
For the levels below $h(c(s_-))$ and above $h(c(s_+))$ this is also true because $h$ coincides with the Cauchy time function $T$ in the past of $\Sigma_-$ and in the future of $\Sigma_+$.
\end{proof}

Now consider the Cauchy time function $\check T = F(1)$.
Since $\check T$ coincides with $\hat T$ near $V_0$ we have along $V_0$
$$
g(\grad\check T,\grad\check T)
=
g(\grad\hat T,\grad\hat T)
=
g(\check N^{-1}\nu,\check N^{-1}\nu)
=
-\check N^{-2}.
$$
Hence if we replace $T$ by $\check T$ in \eqref{eq:BSMetric} then the lapse function will be $\check N$ along $V_0$.
We have deformed a given Cauchy time function through Cauchy time functions into one which has prescribed lapse function along a given level set.
This procedure can be repeated and yields prescribed lapse functions along finite or countable families of Cauchy hypersurfaces as long as they do not intersect nor accumulate.

\section{Counter-intuitive approximations}

Now we turn to our main application of local flexibility, the construction of sections which have possibly very restrictive local properties on open dense subsets.
Typically, it is impossible to achieve this on all of $V$.

\subsection{The approximation theorem}
In order to formulate it precisely we consider the following setting:

\begin{setting}
\label{setting2}
We denote by
\begin{enumerate}[\myicon]
\item 
$V$ a smooth manifold;
\item
$\pi:X\to V$ a smooth vector bundle;
\item
$k\in\N$ a positive integer;
\item
$\Gamma$ a subsheaf of the sheaf of $C^k$-sections of $X$;
\item
$f$ a $C^{k}$-section on $V$;
\item
$N$ a neighborhood of $f$ in the strong $C^{k-1}$-topology.
\end{enumerate}
\end{setting}

Recall the commutative diagram
$$
\xymatrix{
J^kX \ar[rr]^{\pi_{k,k-1}} \ar[dr]^{\pi_k} && J^{k-1}X \ar[dl]_{\pi_{k-1}} \\
& V &
}
$$
Since $\pi:X\to V$ is a vector bundle, $\pi_k$ and $\pi_{k-1}$ are vector bundle projections as well while $\pi_{k,k-1}:J^kX \to J^{k-1}X$ is an affine bundle.

\begin{notation}
We set $J^k\Gamma := \{j^k\gamma(p) \mid \gamma \mbox{ is a local section of $\Gamma$, defined near $p$,}\, p\in V\}\subset J^kX$.
\end{notation}

\begin{satz}\label{thm:solveondense}
Suppose we are in Setting~\ref{setting2} and assume that for each $p\in V$ there is an open neighborhood $W$ of $j^{k-1}f(p)$  in $J^{k-1}X$ and a map $\sigma_W:W\to J^kX$ such that
\begin{enumerate}[\myicon]
\item 
$\sigma_W$ maps compact subsets of $W$ to relatively compact subsets of $J^k X$ (this holds for example if $\sigma_W$ is continuous); 
\item 
$\pi_{k,k-1}\circ\sigma_W = \id_W$;
\item
$\sigma_W(\omega)\in J^k\Gamma$ for each $\omega\in W$.
\end{enumerate}
Then there exists a section $\hat f$ of $X\to V$ and an open dense subset $\UU\subset V$ with the following properties:
\begin{enumerate}[\myicon]
\item 
$\hat f\in C^{k-1,1}_\loc(V,X)$;
\item
$\hat f\in N$;
\item
$\hat f|_\UU \in \Gamma(\UU)$.
\end{enumerate}
\end{satz}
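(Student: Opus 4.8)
The plan is to construct $\hat f$ as a limit of a sequence $f = f_0, f_1, f_2, \ldots$ of $C^{k}$-sections, where at each stage we use Theorem~\ref{thm:FlexLem1} to modify $f_m$ on a carefully chosen open set so that the new section lies in $\Gamma$ on a slightly larger open subset, while controlling the $C^{k-1}$-distance so that the sequence converges in $C^{k-1}_\loc$ and the limit stays in the neighborhood $N$. First I would fix an exhaustion of $V$ by compact sets and a countable basis $\{B_j\}$ of coordinate balls. The key local step is: given a $C^k$-section $g$ which already solves the open relation $\RR := J^k\Gamma$ (this is open because the hypotheses on $\sigma_W$ give local sections through every nearby jet, hence $J^k\Gamma$ contains an open neighborhood of $j^k g$ whenever $g\in\Gamma$ locally — more precisely $\RR$ should be taken as the interior of $J^k\Gamma$, or better, one argues that the image of $\sigma_W$ generates an open set) on an open set $\UU_m$, and given a ball $B$ with $\overline B\not\subset\UU_m$, I want to produce $g'$ agreeing with $g$ outside a small neighborhood of $\overline B$, equal to $g$ near $\partial(\text{that neighborhood})$ appropriately, with $g'$ solving $\Gamma$ on $\UU_m$ together with a nonempty open piece of $B$, and $\|g' - g\|_{C^{k-1}}$ as small as we like on the relevant compact set.

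The heart of the matter is realizing this local step via local flexibility. I would take $V_0$ to be a single point $p_0\in B\setminus \UU_m$, or more robustly a small closed ball $\overline{B'}\subset B\setminus\overline{\UU_m}$. Over a neighborhood $U$ of $V_0$ I need a path $F(t)$ of sections solving $\RR$ with $F(0) = g|_U$, with $F(1)$ lying in $\Gamma$ near $V_0$, and with $j^{k-1}F(t)|_{V_0} = j^{k-1}g|_{V_0}$ constant. This path is built using the section $\sigma_W$: set $\omega := j^{k-1}g(p_0)\in W$, let $\hat\gamma$ be a local section of $\Gamma$ near $p_0$ with $j^k\hat\gamma(p_0) = \sigma_W(\omega)$; then $\hat\gamma$ and $g$ have the same $(k-1)$-jet at $p_0$, so on a small enough $U$ the affine interpolation $F(t) = (1-t)g + t\hat\gamma$ (adjusted so that only the top derivative moves — which is automatic at $p_0$ since the $(k-1)$-jets agree, but one needs to modify $\hat\gamma$ away from $p_0$ by a suitable bump so that $j^{k-1}F(t)$ is exactly constant along $V_0$, not just at the center; this is where taking $V_0$ a ball forces a genuine matching of $(k-1)$-jets and one should instead shrink $V_0$ to a point or use a second-order Taylor correction) stays in the open set $\RR$ for $U$ small by openness, and $F(1)$ agrees with $\hat\gamma \in \Gamma$ near $p_0$. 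Applying Theorem~\ref{thm:FlexLem1} with $f_0 = g$ yields $f\in C^0([0,1],C^k(V,X))$ with $f(1)$ equal to $\hat\gamma$ on a small open $U_0\ni p_0$, equal to $g$ outside $U$, and solving $\RR$ throughout; set $g' := f(1)$, $\UU_{m+1} := \UU_m\cup U_0$. The $C^{k-1}$-smallness comes from choosing $U$ small and using that $g'$ and $g$ differ only on $U$, where $\hat\gamma$ is $C^{k-1}$-close to $g$ since their $(k-1)$-jets agree at $p_0$ and $U$ is tiny.

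Then I would run this over all balls in the basis, diagonally, arranging at step $m$ that we add to $\UU_m$ an open set meeting $B_m$ (if $B_m\not\subset\UU_m$ already), with the perturbation at step $m$ smaller than $\eps_m$ in $C^{k-1}$ on the $m$-th compact set in the exhaustion, where $\sum\eps_m$ is chosen so small that all $f_m$ stay in $N$ and the $C^{k-1}_\loc$-limit $\hat f := \lim f_m$ exists and lies in $N$. Density of $\UU := \bigcup\UU_m$ follows since every basic ball eventually gets a piece; it is open by construction. That $\hat f|_\UU\in\Gamma(\UU)$ requires that once a region lands in $\UU_m$ and carries $\Gamma$-sections, later perturbations do not disturb it: this is ensured because at step $m$ the support of the perturbation is chosen inside $B_m\setminus\overline{\UU_{m-1}}$, so $\hat f$ agrees with $f_m$ on $\UU_{m-1}$ — but since $\UU$ grows, one must be more careful and instead ensure the supports are locally finite and each point of $\UU$ is eventually untouched; alternatively shrink the added open sets so that their closures nest properly. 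Finally, $\hat f\in C^{k-1,1}_\loc$: each $f_m$ is $C^k$, hence $C^{k-1,1}_\loc$, and one needs uniform local Lipschitz bounds on the $(k-1)$-st derivatives of $f_m$; these come from the explicit form $f_m = \Ft(t\tau,\cdot)$ of the local-flexibility output together with the derivative estimates on the cutoff $\tau$ — but in fact the cleanest route is to note $\|\hat f\|_{C^{k-1,1}_\loc}$ is controlled because $\hat f$ is a $C^{k-1}$-limit of $C^k$-functions whose $k$-th derivatives, though blowing up, do so at a controlled rate; honestly this regularity claim is the subtle point and likely the main obstacle — one must track the Lipschitz constant of $\nX^{k-1}\hat f$ through the limit, showing that although individual perturbations involve cutoffs with large $k$-th derivatives, their cumulative effect on the $(k-1)$-st derivative is Lipschitz because each perturbation is supported in a region of small diameter with the $k$-th derivative bounded by $C/\diam$, giving a Lipschitz contribution of size $O(1)$ per scale and a geometrically summable total. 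I expect the delicate bookkeeping of these Lipschitz bounds, and the precise choice of supports guaranteeing that $\UU$-points are eventually frozen, to be where the real work lies; the rest is a standard diagonal-limit argument.
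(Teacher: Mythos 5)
Your overall strategy — iterate local flexibility over a countable dense family of points, freeze the section on a growing open set, control the $C^{k-1}$-distance so the sequence converges and stays in $N$ — is exactly the paper's. But you have a genuine gap precisely at the place you flag as ``the subtle point and likely the main obstacle'': the $C^{k-1,1}_\loc$-regularity of $\hat f$. Your proposed fix (track the Lipschitz constant of $\nX^{k-1}f_m$ through the limit via a multi-scale estimate ``$k$-th derivative $\le C/\diam$, hence $O(1)$ Lipschitz contribution per scale'') is not a proof: the perturbations at different stages overlap in complicated ways, the sets grow rather than nest, and nothing forces the $k$-th derivatives to be \emph{uniformly} bounded across all stages, so the piecewise-$O(1)$ Lipschitz contributions need not be summable. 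Separately, your first guess $\RR=J^k\Gamma$ cannot work since $J^k\Gamma$ is typically a proper closed submanifold of $J^kX$ and has empty interior, and ``the image of $\sigma_W$ generates an open set'' is not a construction.

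The paper resolves all of this in one move that is absent from your proposal. One builds an \emph{open} relation $\RR\subset J^kX$ which (a) contains $\im j^kf$, (b) over every $(k-1)$-jet near $\im j^{k-1}f$ is a convex norm-ball containing a point of $J^k\Gamma$ (this uses $\sigma_W$ to bound the distance from a reference smooth section $\sigma:J^{k-1}X\to J^kX$ to $J^k\Gamma$ over each fiber), and crucially (c) has $\RR\cap\pi_k^{-1}(K)$ \emph{relatively compact} in $J^kX$ for every compact $K\subset V$. Property (b) gives your local straight-line deformation into $\Gamma$ inside $\RR$ for free, with the $(k-1)$-jet fixed at the chosen point. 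Property (c) is the key: every $f_\nu$ in the iteration solves $\RR$, so all $k$-th derivatives of $f_\nu$ (hence of $f_\nu-f_0$) are \emph{locally uniformly bounded} across all $\nu$. Then the $(k-1)$-st derivatives of $f_\nu-f_0$ have a locally uniform Lipschitz constant, this Lipschitz bound survives the $C^{k-1}$-limit, and $\hat f\in C^{k-1,1}_\loc$ drops out with no multi-scale bookkeeping at all. The paper isolates this as a separate ``convexity'' lemma (Lemma~\ref{lem:Q}); without it, and in particular without the relative-compactness requirement on $\RR$, your regularity argument has no foundation. Your hesitation about whether to take $V_0$ a point or a small ball is a minor distraction — the paper takes a single point $V_0=\{p_\nu\}$ and the matching of $(k-1)$-jets is automatic since $\sigma_W$ is a section of $\pi_{k,k-1}$. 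Your worry about points of $\UU$ being ``eventually frozen'' is also handled cleanly once the flexibility lemma is applied with the deformation supported in a neighborhood disjoint from $\overline{U}_{\nu-1}$: then $f_\nu=f_{\nu-1}$ on $U_{\nu-1}$ by the ``$f(t)|_{V\setminus U}=f_0|_{V\setminus U}$'' clause, and the sequence is eventually constant on each $U_\mu$.
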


For the proof of the theorem we need the following lemma:

\begin{lem}\label{lem:Q}
Under the assumptions of Theorem~\ref{thm:solveondense} there exists an open neighborhood  $\RR$ of the image of $j^k f$ in $J^k X$ such that $\RR \cap (\pi_k)^{-1}(K)$ is relatively compact in $J^k X$ for every compact $K\subset V$  and the following ``convexity'' condition holds: 

Let $p \in V$ and let $\phi$ be a $C^k$-germ of sections around $p$ solving $\RR$.
Then there exists an open neighborhood $U$ of $p$ and an $F \in C^0([0,1], C^k(U, X))$ such that 
\begin{enumerate}[\myicon]
\item 
$F(0)$ represents $\phi$; 
\item
$F(t) \in C^k(U, X)$ solves $\RR$ over $U$  for all $t  \in [0,1]$; 
\item 
$F(1) \in \Gamma(U)$;
\item
$j^{k-1}F(t)(p)  = j^{k-1}\phi(p)$ for all $t \in [0,1]$.
\end{enumerate}
\end{lem}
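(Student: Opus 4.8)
The plan is to build $\RR$ pointwise using the local sections $\sigma_W$ and then glue. First I would fix an auxiliary complete metric on $V$ and exhaust $V$ by compact sets; the relative compactness requirement on $\RR \cap (\pi_k)^{-1}(K)$ is easy to arrange afterwards by intersecting with a suitable open tube around the image of $j^k f$ whose fiberwise radius goes to $0$ near infinity, so the real content is the convexity condition. For each $p \in V$ pick the neighborhood $W_p \subset J^{k-1}X$ of $j^{k-1}f(p)$ and the continuous section $\sigma_{W_p} : W_p \to J^kX$ of $\pi_{k,k-1}$ provided by the hypothesis, with image in $J^k\Gamma$. Shrinking $W_p$ we may assume $W_p = (\pi_{k-1})^{-1}(U_p) \cap W_p'$ for a chart domain $U_p \ni p$ over which $X$ is trivial, and that $j^{k-1}f(U_p) \subset W_p$. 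Now set $\RR := \bigcup_{p} (\pi_{k,k-1})^{-1}(W_p) \cap (\text{thin tube around } \im j^kf)$, an open neighborhood of $\im j^k f$ in $J^kX$; after further thinning the tube we get the relative compactness over compacta.

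The heart of the argument is verifying the convexity condition, and here is where I expect the main obstacle. Given $p$ and a $C^k$-germ $\phi$ solving $\RR$ near $p$, its $(k-1)$-jet $j^{k-1}\phi$ is a $C^1$-section of $J^{k-1}X$ near $p$, with $j^{k-1}\phi(p) \in W_q$ for some $q$ (possibly $q \neq p$). Composing, $\sigma_{W_q} \circ j^{k-1}\phi$ is a continuous section of $J^kX \to V$ over a small neighborhood $U$ of $p$, lying in $J^k\Gamma$; but it is merely continuous, not of the form $j^k\gamma$ for an actual section $\gamma \in \Gamma(U)$. To repair this I would work in the trivialization: write $J^kX|_U \cong U \times (\text{jet fiber})$ and split the fiber via the affine bundle structure $\pi_{k,k-1}$ as $(\text{$(k-1)$-jet part}) \oplus (\text{top symbol part})$, the latter being the space of symmetric $k$-tensors valued in the fiber of $X$. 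Writing $\sigma_{W_q} \circ j^{k-1}\phi$ in these coordinates, its $(k-1)$-jet part is exactly $j^{k-1}\phi$ while its top part is some continuous map $g : U \to S^k$. The point of $J^k\Gamma$ membership is that, \emph{locally near each point}, this continuous assignment is realized by an honest local $\Gamma$-section — but not globally on $U$. So instead I would not try to integrate $g$ directly; rather I would use that $\sigma_{W_q}(j^{k-1}\phi(p)) \in J^k\Gamma$ means there is a genuine $\gamma \in \Gamma$ defined near $p$ with $j^k\gamma(p) = \sigma_{W_q}(j^{k-1}\phi(p))$, hence in particular $j^{k-1}\gamma(p) = j^{k-1}\phi(p)$, and $j^k\gamma(p) \in \RR$. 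Then on a sufficiently small $U$ the straight-line homotopy in the fiber coordinates,
\[
F(t) := (1-t)\,\phi + t\,\gamma,
\]
makes sense (both are honest $C^k$-sections near $p$, and $X$ is a vector bundle so the convex combination is a $C^k$-section), satisfies $F(0) = \phi$, $F(1) = \gamma \in \Gamma(U)$, and — since $j^{k-1}\phi(p) = j^{k-1}\gamma(p)$ — also $j^{k-1}F(t)(p) = j^{k-1}\phi(p)$ for all $t$. Finally, $j^kF(0)(p) = j^k\phi(p) \in \RR$ and $j^kF(1)(p) = j^k\gamma(p) \in \RR$, and because $\RR$ is open and the $F(t)$ depend continuously on $t$ in $C^k$, after shrinking $U$ once more the whole path $F(t)$ solves $\RR$ over $U$ for all $t \in [0,1]$. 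This gives all four bulleted properties.

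The delicate point I want to flag is the interplay between the two roles $W_p$ must play: it must be large enough that $j^{k-1}f$ stays inside it on a neighborhood, yet the convexity argument for an \emph{arbitrary} $\RR$-germ $\phi$ forces us to track which chart $W_q$ actually contains $j^{k-1}\phi(p)$, and to ensure $\RR$ was defined so that every such $j^{k-1}\phi(p)$ does land in some $W_q$ with a $\Gamma$-realization available. This is handled by shrinking the tube defining $\RR$ so that its $\pi_{k,k-1}$-image lies in $\bigcup_p W_p$, together with the fact that each $W_p$ can be taken saturated over its chart domain; I would also need the $U_p$ to form a locally finite cover so that the resulting $\RR$ is genuinely open and the relative compactness over compacta is transparent. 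The straight-line homotopy trick crucially uses the \emph{vector bundle} hypothesis in Setting~\ref{setting2} (as opposed to a general fiber bundle), which is why this lemma is stated in that narrower context.
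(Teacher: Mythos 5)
There is a genuine gap at the crucial step: you conclude the path of $k$-jets at $p$,
\[
t \longmapsto (1-t)\,j^k\phi(p) + t\,j^k\gamma(p) \in \pi_{k,k-1}^{-1}\bigl(j^{k-1}\phi(p)\bigr),
\]
lies in $\RR$ for all $t\in[0,1]$ ``because $\RR$ is open and the endpoints lie in $\RR$.'' Openness of $\RR$ together with continuity of the path only gives you $j^kF(t)(p)\in\RR$ for $t$ in neighborhoods of $0$ and $1$; the interior of the segment can perfectly well leave $\RR$, and with an $\RR$ built as ``union of $\pi_{k,k-1}^{-1}(W_p)$ intersected with a thin tube around $\im j^kf$'' there is no reason it should not. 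The same step is also where you need $j^k\gamma(p)\in\RR$ in the first place: if $\phi$ is near the boundary of the tube, the $J^k\Gamma$-point $\sigma_{W_q}(j^{k-1}\phi(p))$ need not lie in that tube at all. Both problems trace back to the same missing idea: $\RR$ must be constructed from the outset so that each fiber $\RR\cap\pi_{k,k-1}^{-1}(\omega)$ is \emph{convex} and \emph{already known to meet} $J^k\Gamma$.

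This is exactly what the paper arranges. Fix a Euclidean inner product on $J^kX\to V$; the orthogonal complement of $\ker\pi_{k,k-1}$ gives a global smooth linear section $\sigma\colon J^{k-1}X\to J^kX$ of $\pi_{k,k-1}$. Over a suitable neighborhood $\WW$ of $\im j^{k-1}f$, contained in $\bigcup W$, the function $\omega\mapsto\dist\bigl(\sigma(\omega),\,J^k\Gamma\cap\pi_{k,k-1}^{-1}(\omega)\bigr)$ is bounded by $\|\sigma(\omega)-\sigma_W(\omega)\|$, hence locally bounded, so one may choose a continuous $R\colon\WW\to\R$ majorizing it and also satisfying $\|j^kf-\sigma(j^{k-1}f)\|<R(j^{k-1}f)$. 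Set
\[
\RR := \bigl\{\Omega\in\pi_{k,k-1}^{-1}(\WW)\ \big|\ \|\Omega-\sigma(\pi_{k,k-1}\Omega)\|<R(\pi_{k,k-1}\Omega)\bigr\}.
\]
Then every fiber of $\RR$ over $\omega\in\WW$ is an open norm ball centered at $\sigma(\omega)$, hence convex, and by construction contains a point of $J^k\Gamma$. Your straight-line homotopy $F(t)=(1-t)\phi+t\gamma$ then works for the reason you wanted — but it is convexity of these balls, not openness plus endpoint membership, that keeps $j^kF(t)(p)$ inside $\RR$ for all $t$. Note also that the $\sigma_W$ from the hypothesis serve only to give the local bound on the distance to $J^k\Gamma$; the section actually used to define the balls is the global, smooth, linear $\sigma$ coming from the inner product.
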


\begin{proof}[Proof of Lemma~\ref{lem:Q}]
We fix an auxiliary Euclidean inner product on the vector bundle $\pi_k:J^kX \to V$ with induced fiber norm $\|\cdot\|_{J^kX}$.
At each $p\in V$ the restriction of $\pi_{k,k-1}$ to the orthogonal complement of $\ker(\pi_{k,k-1})\subset \pi_k^{-1}(p)$ is a linear isomorphism onto $\pi_{k-1}^{-1}(p)$.
Thus
$$
\sigma := \big(\pi_{k,k-1}|_{\ker(\pi_{k,k-1})^\perp}\big)^{-1}:J^{k-1}X \to J^kX
$$
defines a global smooth section of $\pi_{k,k-1}:J^kX \to J^{k-1}X$.

Choose a neighborhood $\WW$ of the image of $j^{k-1}f$ in $J^{k-1}X$ which is a union of open sets $W$ as in the statement of Theorem~\ref{thm:solveondense}.
Shrinking $\WW$ if necessary, we can assume w.l.o.g.\ that it is relatively compact over each compact $K\subset V$.

On such a $W$ the function $J^{k-1}X\to\R\cup\{\infty\}$ defined by $\omega\mapsto \mathrm{dist}(\sigma(\omega),J^k\Gamma\cap \pi_{k,k-1}^{-1}(\omega))$ satisfies
$$
\mathrm{dist}(\sigma(\omega),J^k\Gamma\cap \pi_{k,k-1}^{-1}(\omega)) \le \|\sigma(\omega)-\sigma_W(\omega)\|_{J^kX}
$$
and is hence locally bounded.
Therefore we can find a continuous function $R:\WW\to \R$ such that $\mathrm{dist}(\sigma(\omega),J^k\Gamma\cap \pi_{k,k-1}^{-1}(\omega)) < R(\omega)$ for all $\omega\in\WW$.
By increasing $R$ if necessary, we ensure that 
$$
\|j^kf-\sigma(j^{k-1}f)\|_{J^kX}<R(j^{k-1}f)
$$ 
for the given section $f$.

We define $\RR$ by
$$
\RR := \{\Omega\in \pi_{k,k-1}^{-1}(\WW) \mid \|\Omega-\sigma(\pi_{k,k-1}(\Omega))\|_{J^kX}<R(\pi_{k,k-1}(\Omega)) \} .
$$
Then $\RR$ is an open neighborhood of the image of $j^kf$ in $J^kX$.
For each compact $K\subset V$ the set 
$$
\RR\cap \pi_k^{-1}(K) 
=
\{\Omega\in \pi_{k,k-1}^{-1}(\WW\cap \pi_{k-1}^{-1}(K)) \mid \|\Omega-\sigma(\pi_{k,k-1}(\Omega))\|_{J^kX}<R(\pi_{k,k-1}(\Omega)) \}
$$
is relatively compact.
Let $\varphi$ be a $C^k$-section of $X\to V$, defined on a neighborhood of $p\in V$ such that the image of $j^k\varphi$ is contained in $\RR$.
By construction, $\RR$ intersects $J^k\Gamma\cap \pi_{k,k-1}^{-1}(j^{k-1}\varphi(p))$.
Pick a $k$-jet in $\RR\cap J^k\Gamma\cap \pi_{k,k-1}^{-1}(j^{k-1} \varphi(p))$ and represent it by a local section $\gamma$ of $\Gamma$.
The straight line segment $(1-t)j^k\varphi(p)+tj^k\gamma$ is entirely contained in $\RR\cap \pi_{k,k-1}^{-1}(j^{k-1}\varphi(p))$ by convexity of norm balls.
Thus 
\begin{equation}
F(t):=(1-t)\varphi+t\gamma
\label{eq:deform22}
\end{equation}
has all the required properties if the neighborhood $U$ of $p$ is chosen sufficiently small.
\end{proof}

\begin{proof}[Proof of Theorem~\ref{thm:solveondense}]
We choose an open neighborhood $\NN$ of $\im j^{k-1}f$ in $J^{k-1}X$ such that $\{h\in C^{k-1}(V,X) \mid j^{k-1}h(p)\in\bar{\NN},\, p\in V\} \subset N$.
Pick some $\RR\subset J^k X$ as in Lemma~\ref{lem:Q}.

We provide the jet bundles of $X$ with fiber metrics and induced norms $\|\cdot\|_{J^mX}$ so that the usual $C^m$-norms of sections of $X$ are defined as $\|u\|_{C^m(V)}=\sup_{V}\|j^mu\|_{J^mX}$.

Let $\{p_1,p_2,p_3,\ldots\}$ be a countable dense subset of $V$. 
We construct a sequence $(f_\nu)_{\nu=0,1,\ldots}$ of $C^k$-sections of $X$ together with open neighborhoods $U_\nu$ of $p_\nu$ for $\nu= 1, 2, \ldots$ such that the following holds:
\begin{enumerate}[\myicon]
\item $f_0=f$; 
\item $U_\nu \supset U_{\nu-1}$;
\item $f_{\nu} = f_{\nu-1}$ on $U_{\nu-1}$; 
\item $f_{\nu}$ solves $\RR \cap (\pi_{k,k-1})^{-1}(\NN)$ over $V$;
\item There is a neighborhood $U$ of $\overline{U}_\nu$ such that $f_{\nu}|_U\in\Gamma(U)$;
\item $\|f_\nu-f_{\nu-1}\|_{C^{k-1}(V)}<2^{-\nu}$.
\end{enumerate} 
Assume that $f_{\nu-1}$ has been constructed, together with $U_{\nu-1}$ where $\nu \geq 1$. 
If  $p_{\nu} \in \overline{U}_{\nu-1}$ then, by the inductive assumption, there is an open neighborhood $U_\nu$ of $\overline{U}_{\nu-1}$ such that $f_{\nu-1}|_{U}\in\Gamma(U)$ for a neighborhood $U$ of $\overline{U}_{\nu}$. 
We then simply put $f_\nu := f_{\nu-1}$.

Now assume $p_{\nu} \notin \overline{U}_{\nu-1}$.
We consider a local deformation $F_{p_\nu}\in C^0([0,1],C^k(U_{p_\nu},X))$ as in Lemma~\ref{lem:Q} for the germ represented by $\phi := f_{\nu-1}$ around $p_{\nu}$.
By shrinking $U_{p_\nu}$ if necessary we can assume that $U_{p_\nu}$ is disjoint from $\overline{U}_{\nu-1}$, that $F_{p_\nu}(t)$ solves $\NN$ for $t \in [0,1]$ and that $\|j^{k-1}F_{p_\nu}-j^{k-1}f_{\nu-1}\|_{J^{k-1}X}<2^{-\nu}$. 
For the second and the last requirement we recall  that $\NN$ is an open subset of $J^{k-1}X$ and $j^{k-1} F_{p_\nu}(t)(p)$ is constant in $t$. 

We apply Theorem~\ref{thm:FlexLem1} to the section $f_{\nu-1}$, to $V_0=\{p_\nu\}$, to the deformation $F_{p_\nu}$ and the open PDR 
$$
\RR_\nu := \RR \cap (\pi_{k,k-1})^{-1}(\NN) \cap \{\omega\in J^k X \mid \|\pi_{k,k-1}(\omega)-j^{k-1}f_{\nu-1}\|_{J^{k-1}X} < 2^{-\nu}\}  .
$$
We obtain an open neighborhood $U_{p_\nu,0} \subset U_{p_\nu}$ of $p_\nu$ and a global deformation $f_{p_\nu}\in C^0([0,1],C^k(V,X))$ such that $f_{p_\nu}(1)$ solves $\RR_\nu$ over $V$, it coincides with $F_{p_\nu}(1)$ (and hence is a section of $\Gamma$) over a neighborhood of $\overline{U}_{p_\nu,0}$, and it coincides with $F_{p_\nu}(0)$ (and hence with $f_{\nu-1}$) over a neighborhood of $\overline{U}_{\nu-1}$.
Put $U_\nu := U_{\nu-1}\cup U_{p_\nu,0}$.
Then $f_\nu$ is a section of $\Gamma$ over a neighborhood of $\overline{U}_\nu$.

Moreover, we have $\|f_\nu-f_{\nu-1}\|_{C^{k-1}(V)}<2^{-\nu}$.
This implies that $(f_\nu-f_0)_\nu$ is a Cauchy sequence in the space of $C^{k-1}$-sections with bounded derivatives up to order $k-1$.
Thus there is a limit section in $C^{k-1}(V,X)$ which we denote as $\hat f-f_0$.
By the properties of $\RR$, the derivatives of order $k$ of $f_\nu-f_0$ are locally uniformly bounded.
Hence the derivatives of order $k-1$ are Lipschitz with locally uniform Lipschitz constant.
Such a Lipschitz bound persists under uniform convergence, thus $\hat f-f_0\in C^{k-1,1}_\loc(V,X)$.
Since $f_0\in C^k(V,X)$ we conclude $\hat f\in C^{k-1,1}_\loc(V,X)$.

The set $\UU:=\bigcup_{1 \leq \mu <\infty} U_\mu$ is open and dense in $V$.
Since the sequence $(f_\nu)$ is eventually constant (in $\nu$) on each $U_\mu$, the limit section $\hat f$ satisfies $\hat f|_{\UU}\in\Gamma(\UU)$.

Finally, since each $f_\nu$ solves $\NN$ the image of $j^{k-1}\hat f$ is contained in $\bar\NN$ and hence $\hat f\in N$.
\end{proof}

\subsection{Lipschitz functions}
Theorem~\ref{thm:solveondense} can be used in many different contexts to derive counterintuitive approximation results.
Let us start with a relatively elementary example, namely real-valued functions.

\begin{cor}\label{cor:Lipschitz}
Let $f:[0,1]\to\R$ be a $C^1$-function, let $\eps>0$ and let $K\in\R$.
Then there exists a Lipschitz function $\hat f:[0,1]\to\R$ such that 
\begin{enumerate}[\myicon]
\item 
$|f-\hat f|<\eps$;
\item
$\hat f$ is smooth and satisfies $\hat f'=K$ on an open dense subset of $[0,1]$.
\end{enumerate}
\end{cor}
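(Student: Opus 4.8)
The plan is to derive Corollary~\ref{cor:Lipschitz} as a direct special case of Theorem~\ref{thm:solveondense}. First I would set up the data of Setting~\ref{setting2}: take $V=(0,1)$ (or a slightly larger open interval containing $[0,1]$ after extending $f$ to a $C^1$-function there), let $X=V\times\R\to V$ be the trivial line bundle so that sections are just functions, and put $k=1$. For the subsheaf $\Gamma$ I would take the sheaf whose sections over an open set $W\subset V$ are the smooth functions $\gamma$ with $\gamma'\equiv K$ on $W$ — equivalently, locally of the form $\gamma(x)=Kx+c$. The neighborhood $N$ of $f$ in the strong $C^{0}$-topology is chosen so that every $h\in N$ satisfies $|f-h|<\eps$.

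Next I would verify the key hypothesis of Theorem~\ref{thm:solveondense}: the existence of local sections $\sigma_W\colon W\to J^1X$ of $\pi_{1,0}$ landing in $J^1\Gamma$. Here $J^0X=V\times\R$ with coordinates $(x,y_0)$, and $J^1X=V\times\R^2$ with coordinates $(x,y_0,y_1)$; the relation $J^1\Gamma$ is exactly $\{y_1=K\}$, since the $1$-jet of a local section $x\mapsto Kx+c$ of $\Gamma$ at a point $p$ is $(p,\,Kp+c,\,K)$ and $c$ is free. Thus I can simply define $\sigma_W(x,y_0):=(x,y_0,K)$ on all of $W:=J^0X$. This is continuous, satisfies $\pi_{1,0}\circ\sigma_W=\id_W$, and its image lies in $J^1\Gamma$. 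So the hypotheses of Theorem~\ref{thm:solveondense} are met with room to spare.

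Theorem~\ref{thm:solveondense} then produces a section $\hat f$ of $X$ and an open dense subset $\UU\subset V$ with $\hat f\in C^{0,1}_{\loc}(V,X)$, $\hat f\in N$, and $\hat f|_\UU\in\Gamma(\UU)$. Unwinding the definitions: $\hat f\colon V\to\R$ is locally Lipschitz, hence Lipschitz on the compact interval $[0,1]$ after restriction; $|f-\hat f|<\eps$ on $[0,1]$ by the choice of $N$; and on the open dense set $\UU\cap(0,1)$ the function $\hat f$ is locally of the form $Kx+c$, hence smooth there with $\hat f'\equiv K$. Restricting $\UU$ to $[0,1]$ (or rather $\UU\cap(0,1)$, which is still open and dense in $[0,1]$) gives the required open dense subset. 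This finishes the proof.

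I do not expect a genuine obstacle here — the whole point is that this corollary is a minimal illustration of the general theorem, and the relation $\{y_1=K\}$ is about as simple a fibered structure as one could ask for. The only minor care needed is the passage between the open manifold $V$ (on which Setting~\ref{setting2} and Theorem~\ref{thm:solveondense} operate) and the closed interval $[0,1]$ appearing in the statement: one extends $f$ to a $C^1$-function on an open interval $V\supset[0,1]$, applies the theorem there, and restricts everything back to $[0,1]$, noting that openness and density of $\UU$ in $V$ imply openness and density of $\UU\cap[0,1]$ in $[0,1]$ and that $C^{0,1}_{\loc}$ on $V$ restricts to genuine Lipschitz on the compact set $[0,1]$.
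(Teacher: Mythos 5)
Your proposal is correct and is essentially the same argument as the paper's: extend $f$ to a $C^1$-function on an open manifold (the paper uses $V=\R$; you suggest an open interval, which is equivalent), take $X$ the trivial line bundle, $k=1$, $\Gamma$ the sheaf of smooth functions with derivative $K$, and $\sigma_W(p,\xi)$ the $1$-jet at $p$ of $t\mapsto K(t-p)+\xi$, then invoke Theorem~\ref{thm:solveondense} and restrict back to $[0,1]$. No meaningful differences.
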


\begin{proof}
We extend $f$ to a $C^1$-function, again denoted $f$, to $\R$.  
We apply Theorem~\ref{thm:solveondense} with the following choices in Setting~\ref{setting2}:
$V=\R$, $X$ is the trivial line bundle so that sections are nothing but real-valued functions, $k=1$, and $\Gamma$ is the sheaf of smooth functions with constant derivative $K$. 
The strong $C^0$-neighborhood of $f$ is given by $N=\{h\in C^0(\R) \mid |f-h|<\eps\}$.

Theorem~\ref{thm:solveondense} applies because we can put $W:=J^0X=X=V\times\R$ and $\sigma_W(p,\xi)$ defined to be the $1$-jet of the affine function $t\mapsto K\cdot(t - p) + \xi$.
The function $\hat f:\R\to\R$ given by Theorem~\ref{thm:solveondense} is locally Lipschitz, hence its restriction to $[0,1]$ is Lipschitz.
\end{proof}

If we apply this corollary to $f(t)=t$, $K=0$ and $\eps=0.0001$ then we get a Lipschitz function $\hat f:[0,1]\to\R$ with $\hat f(0)<0.0001$, $\hat f(1)>0.9999$ and $\hat f'=0$ on an open dense subset.
Note that Lipschitz functions are differentiable almost everywhere by Rademacher's theorem and the fundamental theorem of calculus holds.
Thus we have 
$$
\int_0^1 \hat f'(x)\, dx = \hat f(1)-\hat f(0) > 0.9998
$$
which, at first glance, seems to violate $\hat f'=0$ on the open dense subset.
The point is that open dense subsets need not have full measure, so there is no contradiction.
Clearly, $\hat f$ cannot be $C^1$ in this case.

This function is not to be confused with the Cantor function (see e.g.\ \cite{Cantor}), also known as the devil's staircase.
The Cantor function is a H\"older continuous function $[0,1]\to[0,1]$ with H\"older exponent $\alpha=\ln2/\ln3$.
It has vanishing derivative on an open subset of full measure but it is not absolutely continuous.
Hence the fundamental theorem of calculus cannot be applied and the Cantor function is not Lipschitz.

\subsection{Embeddings of surfaces}
Next we approximate embedded surfaces by those with constant Gauss curvature on open dense subsets.

\begin{cor}\label{cor:einbett}
Let $K\in\R$.
Let $V$ be an analytic surface, let $f:V\hookrightarrow \R^3$ be a $C^2$-embedding and let $N$ be a neighborhood of $f$ in the strong $C^1$-topology.

Then there exists a $C^{1,1}$-embedding $\hat f:V\hookrightarrow \R^3$ in $N$ which is analytic on an open dense subset $\UU\subset V$ and has constant Gauss curvature $K$ on $\UU$ (w.r.t.\ the induced metric).
\end{cor}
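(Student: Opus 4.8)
The plan is to deduce the corollary from Theorem~\ref{thm:solveondense}, applied in Setting~\ref{setting2} with the following data: $V$ the given analytic surface, $X := V\times\R^3\to V$ the trivial rank-$3$ vector bundle (so sections are maps $V\to\R^3$), $k := 2$ (the Gauss curvature of the induced metric is a second-order quantity in the immersion), $\Gamma$ the subsheaf of the sheaf of $C^2$-sections whose sections over an open $U\subset V$ are the real-analytic immersions $U\to\R^3$ whose induced first fundamental form has constant Gauss curvature $K$, $f$ the given $C^2$-embedding, and $N$ the given strong $C^1=C^{k-1}$-neighborhood (shrunk if necessary). If the hypothesis of Theorem~\ref{thm:solveondense} holds, we obtain $\hat f\in C^{1,1}_{\loc}(V,\R^3)\cap N$ and an open dense $\UU\subset V$ with $\hat f|_\UU\in\Gamma(\UU)$, i.e.\ $\hat f$ is analytic with constant Gauss curvature $K$ on $\UU$. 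Since $\hat f$ lies in a small strong $C^1$-neighborhood of the embedding $f$, and embeddings form an open subset in the strong $C^1$-topology, $\hat f$ is again an embedding. So everything reduces to verifying the hypothesis of Theorem~\ref{thm:solveondense}, namely the existence of the continuous sections $\sigma_W$ of $\pi_{2,1}$ with image in $J^2\Gamma$.

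\textbf{Reduction to a Monge–Ampère equation.} Fix $p\in V$ and an analytic chart near $p$. After an affine change of coordinates of $\R^3$ adapted to the $1$-jet $j^1f(p)$ — moving $f(p)$ to the origin and $df(p)(T_pV)$ to the horizontal plane $\R^2\times\{0\}$ — every section sufficiently $C^1$-close to $f$ near $p$ is, over a small neighborhood, the graph $x\mapsto(x,u(x))$ of a function $u$ whose values $u(0),\nabla u(0)$ are the corresponding entries of its $1$-jet, depending analytically on that $1$-jet. In these coordinates the condition that the induced metric have constant Gauss curvature $K$ becomes the Monge–Ampère equation
\[
    \det D^2 u \;=\; K\,(1+|\nabla u|^2)^2 .
\]
Thus, given a $1$-jet $\omega$ near $j^1f(p)$ — equivalently, prescribed analytic-in-$\omega$ data $u(0),\nabla u(0)$ — I need to produce an analytic solution $u$ of this equation near $0$ whose $2$-jet at $0$ depends continuously on $\omega$; transported back through the coordinate change, this $2$-jet is $\sigma_W(\omega)$, and it lies in $J^2\Gamma$ by construction.

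\textbf{Solving by Cauchy–Kovalevskaya.} First I would fix the Hessian $H:=D^2u(0)$: choose constants $H_{11}\neq 0$ and $H_{12}$ once and for all (adapted to $j^1f(p)$) and set $H_{22}:=\bigl(H_{12}^2+K(1+|\nabla u(0)|^2)^2\bigr)/H_{11}$, so that $\det H=K(1+|\nabla u(0)|^2)^2$ while $H$ depends analytically on $\omega$; such a symmetric $H$ with $H_{11}\neq0$ exists for every $K\in\R$ (in a suitable frame $H$ can be taken to be $\sqrt K\cdot\id$ when $K>0$ and $\mathrm{diag}(1,K)$ when $K\le 0$). Next I pose analytic Cauchy data on the line $\{x^2=0\}$: let $u(\cdot,0)$ have $2$-jet $u(0)+\partial_1u(0)\,x^1+\tfrac12 H_{11}(x^1)^2$ and let $\partial_2u(\cdot,0)=\partial_2u(0)+H_{12}\,x^1$, both analytic in $\omega$. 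Written as $u_{22}=(u_{12}^2+K(1+|\nabla u|^2)^2)/u_{11}$, the equation has analytic right-hand side and is non-characteristic along $\{x^2=0\}$ near $0$, since $u_{11}=H_{11}\neq0$ there. The Cauchy–Kovalevskaya theorem, with its analytic dependence on parameters, then yields an analytic local solution $u$ depending analytically on $\omega$; and one checks directly that its $2$-jet at $0$ is $(u(0),\nabla u(0),H)$, because $u_{11}(0),u_{12}(0)$ are read off the Cauchy data and $u_{22}(0)$ is forced by the equation to equal $H_{22}$. For $\omega$ in a small enough neighborhood $W$ of $j^1f(p)$ the one fixed coordinate change still realizes nearby sections as graphs and keeps $u_{11}$ away from $0$, so $\sigma_W$ is well-defined and continuous, $\pi_{2,1}\circ\sigma_W=\id_W$, and $\sigma_W(W)\subset J^2\Gamma$.

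\textbf{Main obstacle.} Everything outside the construction of $\sigma_W$ is routine; the heart of the proof is solving the prescribed–Gauss–curvature (Monge–Ampère) equation locally with a \emph{prescribed $2$-jet} and with analytic, hence continuous, dependence on that jet. The two points that require genuine care are the elementary but case-dependent choice of a second fundamental form realizing $\det\mathrm{II}=K\det\mathrm{I}$ with a non-vanishing entry (the sign of $K$ matters), and the appeal to the parametrized Cauchy–Kovalevskaya theorem together with the check that the $2$-jet of the resulting analytic solution is exactly the prescribed $H$. Once $\sigma_W$ is in hand, Theorem~\ref{thm:solveondense} produces $\hat f$ and the corollary follows.
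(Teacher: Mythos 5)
Your overall strategy matches the paper's exactly: apply Theorem~\ref{thm:solveondense} with $X$ the trivial $\R^3$-bundle, $k=2$, $\Gamma$ the sheaf of analytic solutions of the curvature-$K$ equation, and read off $\hat f$ from its conclusion. The genuine divergence is in the construction of the local sections $\sigma_W$ of $\pi_{2,1}$ with values in $J^2\Gamma$. The paper does this without any PDE existence theory: it fixes once and for all one analytic function $h$ on a disk with $h(0)=0$, $\nabla h(0)=0$ whose graph has constant Gauss curvature $K$, and for each $1$-jet $\omega$ composes this fixed model with an $\omega$-dependent Euclidean motion $E_\omega$ (chosen so that $E_\omega$ carries $\R^2\times 0$ to the tangent plane encoded by $\omega$) and a representative immersion $\phi$; then $\sigma_W(\omega):=j^2(S_\omega)(p)$ automatically matches $\omega$ to first order and lies in $J^2\Gamma$. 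You instead keep one fixed frame of $\R^3$ near $j^1f(p)$, write nearby immersions as graphs $z=u(x^1,x^2)$, translate the curvature-$K$ condition into the Monge--Amp\`ere equation $\det D^2u = K(1+|\nabla u|^2)^2$, choose a second-order coefficient matrix $H$ with $H_{11}\neq 0$ and the required determinant, and invoke the parametrized Cauchy--Kovalevskaya theorem to produce an analytic solution with that $2$-jet, depending continuously on $\omega$. Both routes are valid; the paper's purchases economy (it only ever needs one explicit model surface, and no PDE existence theorem), while yours is closer in spirit to an explicit local solvability argument and perhaps more robust for generalizations where no closed-form model is available. Two small points you should tighten: an ``affine'' change of coordinates on $\R^3$ does not in general preserve Gauss curvature, so you must restrict to a rigid motion; and you should say explicitly how the graph coordinate $x$ relates to the chart coordinate on $V$ when you ``transport the $2$-jet back'' (e.g.\ by choosing the representative $\phi$ with $j^1\phi(p)=\omega$ to be affine and defining $\sigma_W(\omega):=j^2\bigl(E\circ(\phi^E_1,\phi^E_2,u(\phi^E_1,\phi^E_2))\bigr)(p)$), since $\sigma_W(\omega)$ must live in $J^2_pX$ over the source manifold $V$ and not merely over the projected graph plane.
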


\begin{proof}
We apply Theorem~\ref{thm:solveondense} with the following choices in Setting~\ref{setting2}:
Let $X$ be the trivial $\R^3$-bundle so that sections are maps $V\to \R^3$.
Let $k=2$ and $\Gamma$ be the sheaf of analytic maps $w$ satisfying
$$
\det\bigg(\Big\langle\frac{\partial^2w}{\partial u^i\partial u^j},\frac{\partial w}{\partial u^1}\times \frac{\partial w}{\partial u^2}\Big\rangle\bigg)
=
K\cdot \det\bigg(\Big\langle \frac{\partial w}{\partial u^i},\frac{\partial w}{\partial u^j}\Big\rangle\bigg)\cdot \Big|\frac{\partial w}{\partial u^1}\times \frac{\partial w}{\partial u^2}\Big|
$$
in local coordinates $(u^1,u^2)$ on $V$.
If $\frac{\partial w}{\partial u^1}$ and $\frac{\partial w}{\partial u^2}$ are linearly independent then this condition is equivalent to the induced Gauss curvature being $K$ and otherwise it is void.
Since the set of embeddings is open in the strong $C^1$-topology (\cite[Thm.~1.4]{Hirsch94}) we can assume w.l.o.g.\ that all maps in $N$ are embeddings, by shrinking $N$ if necessary.

To see that Theorem~\ref{thm:solveondense} applies let $D\subset\R^2$ be an open disk about the origin and let $h:D\to\R$ be an analytic function such that $h(0)=0$, $\nabla h(0)=0$, and the graph of $h$ is a surface of constant Gauss curvature $K$ in $D\times\R$.
Let $(U,x^1,x^2)$ be a local analytic chart of $V$.
We put 
$$
W:=\{\omega\in\pi_1^{-1}(U) \mid \mbox{the differential of (a map representing) $\omega$ is injective}\}.
$$
Now given $\omega\in W$ put $p:=\pi_1(\omega)$ and represent $\omega$ by an analytic map $\phi$ defined near $p$.
By shrinking the domain we can ensure that $\phi$ is an embedding. 
Let $A_\omega$ be the unique special-orthogonal matrix $A_\omega\in\mathsf{SO}(3)$ with 
\begin{align*}
A_\omega e_1 
&= 
\lambda\cdot \frac{\partial \phi}{\partial x^1}(x(p)), \quad \lambda>0,\\
A_\omega e_2 
&= 
\mu\cdot \frac{\partial \phi}{\partial x^1}(x(p)) + \nu\cdot \frac{\partial \phi}{\partial x^2}(x(p)), \quad \nu>0.
\end{align*}
Here $e_1,e_2,e_3$ denote the standard basis of $\R^3$.
The matrix $A_\omega$ is uniquely determined by $\omega$ (and the coordinate system) and depends continuously on $\omega$.
Consider the Euclidean motion $E_\omega:\R^3\to\R^3$ defined by $E_\omega x=A_\omega x + \phi(p)=A_\omega x + \pi_{1,0}(\omega)$.
The map
$$
S_\omega
:=
E_\omega \circ 
\begin{pmatrix}(E_\omega^{-1}\circ \phi)_1\\
               (E_\omega^{-1}\circ \phi)_2\\
               h\big((E_\omega^{-1}\circ \phi)_1,(E_\omega^{-1}\circ \phi)_2\big)
\end{pmatrix}
$$
is analytic, parametrizes a surface with constant Gauss curvature $K$, and has $1$-jet $\omega$ at $p$.
Thus 
$$
\sigma_W(\omega) := j^2(S_\omega)(p) \in J^k\Gamma
$$
is a local section as required in the assumptions of Theorem~\ref{thm:solveondense}.
\end{proof}

Corollary~\ref{cor:einbett} is an extrinsic companion to Corollary~\ref{satz:approxpos} below.
It does not contradict the Gauss-Bonnet theorem, see Remark~\ref{rem:GaussBonnet}.

\section{Deforming Riemannian metrics}

In this final section we apply our results to Riemannian metrics.

\begin{cor}\label{satz:approxpos}
Let $V$ be a differentiable manifold of dimension $n \ge 2$ and let $g$ be a $C^2$-Riemannian metric on $V$.
Let $N$ be a neighborhood of $g$ in the strong $C^1$-topology.
Let $K\in\R$.

Then there exists a Riemannian metric $\hat g$ on $V$ with the following properties:
\begin{enumerate}[\myicon]
\item
$\hat g$ has local $C^{1,1}$-Lipschitz regularity;
\item 
$\hat g \in N$;
\item
$\hat g$ is smooth and has constant sectional curvature equal to $K$ on an open dense subset of $V$.
\end{enumerate}
\end{cor}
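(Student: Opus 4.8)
The plan is to obtain $\hat g$ by a single application of Theorem~\ref{thm:solveondense}. In Setting~\ref{setting2} I take $V$ the given manifold, $X := \mathrm{Sym}^2 T^*V$ the bundle of symmetric $(0,2)$-tensors, $k := 2$ (sectional curvature is a second-order condition on the metric), $f := g$, and $\Gamma$ the subsheaf of the sheaf of $C^2$-sections of $X$ whose sections over an open set $O \subseteq V$ are the smooth Riemannian metrics on $O$ of constant sectional curvature $K$. Since positive definiteness is an open condition in the strong $C^0$-topology, I first shrink $N$ so that every section in $N$ is a Riemannian metric; then the section $\hat f$ produced by Theorem~\ref{thm:solveondense} is automatically a Riemannian metric $\hat g$, and its three conclusions ($\hat f \in C^{1,1}_\loc$, $\hat f \in N$, $\hat f|_\UU \in \Gamma(\UU)$ on an open dense $\UU$) are exactly the three asserted properties of $\hat g$. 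So the whole argument reduces to verifying the hypothesis of Theorem~\ref{thm:solveondense}: the construction, near every point, of the continuous sections $\sigma_W$ of $\pi_{2,1}$ with values in $J^2\Gamma$.

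To build $\sigma_W$ near a point $p$, I fix a chart $(U, x^1, \ldots, x^n)$ about $p$ and the explicit model metric $\hat g_K := (1 + \tfrac K4 |x|^2)^{-2}\sum_i (dx^i)^2$, which has constant sectional curvature $K$ for every $K\in\R$ (this is where $n\ge 2$ enters, both for sectional curvature to be defined and for such a model to exist), is defined on a neighborhood of $0\in\R^n$, and has vanishing first derivatives at $0$. Let $W$ be the open set of $1$-jets $\omega \in J^1X$ over points of $U$ whose tensor part is positive definite; this is a neighborhood of $j^1 g(p)$. Given $\omega \in W$ over $q := \pi_1(\omega)$, with tensor part $b$ and first-derivative part $(c_{lij})$, I look for the diffeomorphism germ $\Phi(x) = A(x-q) + Q(x-q,x-q)$ (with $A$ linear invertible, $Q$ symmetric quadratic valued in $\R^n$, $\Phi(q)=0$) such that $\Phi^*\hat g_K$ has $1$-jet $\omega$ at $q$. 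Matching the value forces $A^\top A = b$, solved continuously by $A := b^{1/2}$; matching the first derivatives then becomes, using $\partial_c(\hat g_K)(0)=0$, the linear equation $2\sum_a\big(Q^a_{il}A^a_j + A^a_i Q^a_{jl}\big) = c_{lij}$ for $Q$. Then $\sigma_W(\omega) := j^2(\Phi^*\hat g_K)(q) \in J^2\Gamma$, since $\Phi^*\hat g_K$ is a smooth positive-definite metric of constant curvature $K$ near $q$ (curvature being a diffeomorphism invariant), and $\pi_{2,1}\circ\sigma_W = \id_W$; continuity of $\sigma_W$ follows from continuity of all the choices.

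The main obstacle is precisely the solvability, with continuous dependence, of that linear equation for $Q$: this is the classical fact underlying normal coordinates, namely that the first derivatives of a metric (equivalently its Christoffel symbols in a chart) can be prescribed arbitrarily, so that the second-order part of a diffeomorphism has exactly the right amount of freedom, the only pointwise obstruction to matching being of order two and already absorbed into the constant-curvature model. Concretely, the linear map $Q \mapsto \big(2\sum_a(Q^a_{il}A^a_j + A^a_i Q^a_{jl})\big)_{lij}$ goes between spaces of equal dimension $n^2(n+1)/2$, and it is injective by the usual symmetric/antisymmetric argument (the array $R_{jil} := \sum_a A^a_j Q^a_{il}$ is symmetric in $(i,l)$ and, if it annihilates $c$, antisymmetric in $(j,i)$, hence zero; then $A$ invertible gives $Q=0$), so it is bijective and $Q$ depends smoothly on $A$ and $c$. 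This is elementary but is where the real content of the proof sits.

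With these $\sigma_W$ in hand, Theorem~\ref{thm:solveondense} applies and yields a section $\hat g \in C^{1,1}_\loc(V, \mathrm{Sym}^2 T^*V)$ lying in $N$, together with an open dense $\UU \subseteq V$ on which $\hat g$ is smooth and of constant sectional curvature $K$; by the initial shrinking of $N$ it is a genuine Riemannian metric on all of $V$. This is the intrinsic counterpart of Corollary~\ref{cor:einbett}, and it is consistent with the Gauss--Bonnet theorem precisely because an open dense set need not have full measure.
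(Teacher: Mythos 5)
Your proposal is correct, and at the top level it is the same as the paper's: instantiate Theorem~\ref{thm:solveondense} with $X=\mathrm{Sym}^2 T^*V$, $k=2$, $\Gamma$ the sheaf of smooth constant-curvature-$K$ metrics, and then the whole content is the construction of the local sections $\sigma_W$ of $\pi_{2,1}$ with values in $J^2\Gamma$. Where you diverge from the paper is in how $\sigma_W$ is built. The paper makes an auxiliary affine metric $h_\omega=\omega_0+\sum_j\omega_j x^j$ out of the $1$-jet $\omega$, so that $j^1(h_\omega)(p)=\omega$ by construction, and then sets $\sigma_W(\omega):=j^2\bigl(((\exp_p^{h_\omega})^{-1})^*g^{[K]}_{T_pV}\bigr)(p)$; the verification that this is a section of $\pi_{2,1}$ is a one-line consequence of the fact that in normal coordinates every metric has the Euclidean $1$-jet at the origin. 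You instead pull back an explicit conformal model $\hat g_K=(1+\tfrac K4|x|^2)^{-2}\sum(dx^i)^2$ by a second-order Taylor-polynomial diffeomorphism $\Phi(x)=A(x-q)+Q(x-q,x-q)$, and reduce the matching of $1$-jets to the linear algebra lemma that $Q\mapsto\bigl(2\sum_a(Q^a_{il}A^a_j+A^a_iQ^a_{jl})\bigr)_{lij}$ is a bijection between spaces of dimension $n^2(n+1)/2$ for invertible $A$; your symmetric/antisymmetric injectivity argument is the standard one and is correct, and continuity of $\omega\mapsto(b^{1/2},Q,\Phi)$ is clear. Both routes work. What the paper's version buys is conceptual economy: invoking normal coordinates dispenses with any explicit model and with the Taylor-matching computation, and the property $\pi_{2,1}\circ\sigma_W=\id$ becomes automatic. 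What your version buys is being completely elementary and self-contained (no exponential map, no normal-coordinates lemma, everything visible at the level of polynomials and $n\times n$ matrices), at the cost of having to verify the solvability of a linear system by hand. A minor quibble: the existence of a constant-curvature model does not actually require $n\ge2$ (for $n=1$ the formula still defines a metric, just with no curvature to speak of); $n\ge2$ is needed only so that the sectional curvature condition is nontrivial and the corollary has content.
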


\begin{proof}
We apply Theorem~\ref{thm:solveondense} with the following choices in Setting~\ref{setting2}:
Let $\pi : X \to V$ be the vector bundle of symmetric $(2,0)$-tensors and let $k = 2$.
Let $\Gamma$ be the sheaf of smooth Riemannian metrics of constant sectional curvature $K$. 

The see that Theorem~\ref{thm:solveondense} applies let $g^{[K]}_{\R^n}$ be the metric of constant sectional curvature $K$ on an open ball about the origin, expressed in normal coordinates.
Then $g^{[K]}_{\R^n}$ has the same $1$-jet at $0$ as the Euclidean metric and orthogonal transformations are isometries.
For any $n$-dimensional Euclidean vector space $Y$ we can choose a linear isometry $A:Y\to\R^n$ and pull the metric back, $g^{[K]}_{Y} := A^*g^{[K]}_{\R^n}$.
The metric $g^{[K]}_{Y}$ does not depend on the choice of $A$.

Next pick a local chart $(U,x^1,\ldots,x^n)$ on $V$.
Put 
$$
W:=\{\omega\in\pi_{1}^{-1}(U) \mid \pi_{1,0}(\omega)\mbox{ is positive definite}\}.
$$
The local section $\sigma_W$ is defined as follows:
Express the $1$-jet $\omega\in \pi_1^{-1}(U)$ in the given coordinates as $\omega = \omega_0 + \sum_j \omega_j x^j$ and associate the metric $h_\omega$ given by this formula, $h_\omega = \omega_0 + \sum_j \omega_j x^j$, defined in a neighborhood of $p:=\pi_1(\omega)$.
Clearly, $j^1(h_\omega)(p)=\omega$.
Denote the exponential map of $h_\omega$ at $p$ by $\exp_p^{h_\omega}$.
Now put
$$
\sigma_W(\omega) := j^2\Big(\big((\exp_p^{h_\omega})^{-1}\big)^*g^{[K]}_{T_pV}\Big)(p) \in J^k\Gamma.
$$
Observe that indeed
$$
\pi_{2,1}(\sigma_W(\omega))
=
j^1\Big(\big((\exp_p^{h_\omega})^{-1}\big)^*g^{[K]}_{T_pV}\Big)(p)
=
j^1(h_\omega)(p)
=
\omega
$$
because $j^1((\exp_p^{h_\omega})^*h_\omega)(0) = j^1(g_\mathrm{eucl})(0)=j^1(g^{[K]}_{T_pV})(0)$.
\end{proof}

\begin{rem}
Corollary~\ref{satz:approxpos} would be false if we demanded that $\hat g$ had regularity $C^2$ on all of $V$.
Then the sectional curvature of $(V,\hat g)$ would be continuous and $\sec_{\hat g}\equiv K$ on a dense subset would imply that this holds on all of $V$.
But most $V$ do not admit such a metric.
\end{rem}

\begin{rem}\label{rem:AlexandrovBonnetMyers}
Even if the metric $\hat g$ on $V$ has constant sectional curvature $1$ on an open dense subset $\UU$, it cannot, in general, have curvature $\ge 1$ in the sense of Alexandrov spaces on all of $V$.
Namely, this implies that the diameter of $(V,\hat g)$ is bounded above by $\pi$, at least if $V$ is compact, see e.g.\ \cite[Thm.~10.4.1]{BBI}.
Now if $\diam(V,g)$ is much larger than $\pi$ then this contradicts $\hat g$ being $C^1$-close (and hence $C^0$-close) to~$g$.

In fact, if a metric has curvature $\ge 1$ in the Alexandrov sense on an open dense subset only, then there is no upper bound on the diameter.
This is illustrated by the following picture:
\begin{center}
\includegraphics[scale=0.2]{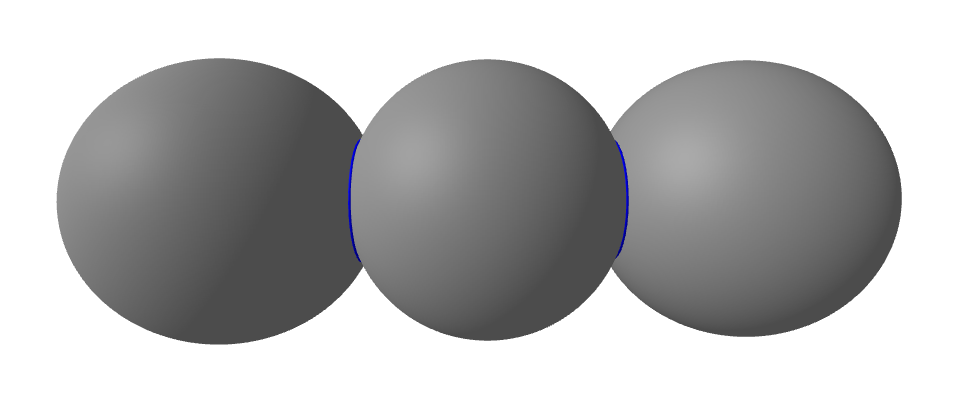}

\Abb{Space with curvature $\ge 1$ on dense open subset but $\diam>\pi$}
\end{center}
\end{rem}

\begin{rem}\label{rem:GaussBonnet}
Let $V$ be a compact surface of higher genus.
By Corollary~\ref{satz:approxpos} we can find a $C^{1,1}$-metric on $V$ whose Gauss curvature (which is defined as an $L^\infty$-function on $V$) satisfies $K\equiv 1$ on an open dense subset $\UU\subset V$.
The Gauss-Bonnet theorem (which holds for $C^{1,1}$-metrics, see Appendix~\ref{sec-AnhangC}) says $\int_V K \, d A = 2\pi\chi(V) < 0$.
This may seem to be a contradiction but, again, open dense subsets need not have full measure.
\end{rem}

\begin{rem}
Similarly, we can refine Remark~\ref{rem:AlexandrovBonnetMyers}.
Let $V$ be compact for simplicity.
If $\UU\subset V$ has full measure then the Bonnet-Myers theorem for $C^{1,1}$-metrics applies (\cite[Thm.~4.1]{Graf}) and we get $\diam(V)\le\pi$.
Hence, if $\diam(V)>\pi$, which we can arrange by Remark~\ref{rem:vorschreiben} below, $\UU$ cannot have full measure.
\end{rem}

\begin{rem}\label{rem:approxneg}
Even if $\hat g$ has constant sectional curvature $-1$ on an open dense subset we cannot demand that the curvature is $\le -1$ the Alexandrov sense on all of $V$.
In this case we would violate Preissmann's theorem \cite[Thm.~9.3.3]{BBI}, for instance.
\end{rem}

\begin{cor}\label{cor:strangemetrics}
Let $K\in\R$.
Each differentiable manifold of dimension $\ge 2$ has a complete $C^{1,1}_\loc$-Riemannian metric which is smooth and has constant sectional curvature $\equiv K$ on an open dense subset.
\end{cor}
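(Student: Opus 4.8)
The plan is to deduce this from Corollary~\ref{satz:approxpos} together with the elementary fact that a metric which is $C^0$-close, in the strong topology, to a complete metric is itself complete.

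First I would fix a complete smooth Riemannian metric $g$ on $V$; such a metric exists on any manifold, for instance by pulling back the Euclidean metric under a proper smooth embedding $V\hookrightarrow\R^N$. In particular $g$ is a $C^2$-metric, so Corollary~\ref{satz:approxpos} applies to it. Next I would choose the strong $C^1$-neighborhood $N$ of $g$ so small that every $h\in N$ satisfies $\tfrac12\,g_p(v,v)\le h_p(v,v)\le 2\,g_p(v,v)$ for all $p\in V$ and all $v\in T_pV$. This is possible: at each point this amounts to requiring that the $g$-self-adjoint endomorphism representing $h$ has all eigenvalues in $[\tfrac12,2]$, an open condition, so the set of such $h$ is open in the strong $C^0$-topology and hence in the finer strong $C^1$-topology; it obviously contains $g$, so we may take $N$ to lie inside it.

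Now I would apply Corollary~\ref{satz:approxpos} to the metric $g$, to this neighborhood $N$, and to the prescribed $K\in\R$. This produces a Riemannian metric $\hat g$ on $V$ of local $C^{1,1}$-regularity, lying in $N$, which is smooth with constant sectional curvature equal to $K$ on an open dense subset $\UU\subset V$. It only remains to check that $\hat g$ is complete. Since $\hat g\in N$ we have $\tfrac12\,g\le\hat g\le 2\,g$ pointwise as quadratic forms, hence $\tfrac{1}{\sqrt2}\,L_g(\gamma)\le L_{\hat g}(\gamma)\le\sqrt2\,L_g(\gamma)$ for every piecewise-$C^1$ curve $\gamma$ in $V$, and therefore the induced length metrics satisfy $\tfrac{1}{\sqrt2}\,d_g\le d_{\hat g}\le\sqrt2\,d_g$. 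Since $(V,d_g)$ is complete by the Hopf--Rinow theorem, so is $(V,d_{\hat g})$.

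There is no serious obstacle in this argument; the only point worth a word of care is that $\hat g$ is a priori only continuous, being merely $C^{1,1}_\loc$, so one must interpret ``complete'' as metric completeness of the associated length space rather than geodesic completeness — but the bi-Lipschitz comparison with the complete smooth metric $g$ handles this directly and is all the statement demands.
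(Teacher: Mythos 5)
Your proof is correct and follows essentially the same route as the paper's: fix a complete smooth metric, shrink the strong $C^1$-neighborhood so that every metric in it is complete, and apply Corollary~\ref{satz:approxpos}. The paper states this tersely, whereas you supply the bi-Lipschitz comparison and the remark on metric versus geodesic completeness for $C^{1,1}_\loc$-metrics, which is a worthwhile clarification but not a different argument.
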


\begin{proof}
Choose a complete smooth Riemannian metric $g$ on $V$.
We can choose a neighborhood $N$ of $g$ in the strong $C^1$-topology such that each metric in $N$ is complete.
Now apply Corollary~\ref{satz:approxpos}.
\end{proof}

\begin{rem}\label{rem:vorschreiben}
As discussed above, this is false for $C^2$-metrics.
Since the $C^1$-metric can be chosen $C^1$-close to an arbitrary smooth metric on $V$, we can in Corollary~\ref{cor:strangemetrics} in addition prescribe geometric quantities like volume, diameter, injectivity radius up to arbitrarily small error.
\end{rem}

\begin{rem}
One might be tempted to think that a metric as in Corollary~\ref{cor:strangemetrics} can be constructed as follows:
triangulate the manifold, then equip the open $n$-simplices with metrics of constant sectional curvature $K$ and glue the metrics along the $(n-1)$-skeleton.
Such a procedure will not give a metric of $C^{1,1}_\loc$-regularity.
Indeed, such a metric would satisfy $\sec\equiv K$ on an open dense subset of full measure and hence violate the Gauss-Bonnet theorem, cf.\ Remark~\ref{rem:GaussBonnet}.
\end{rem}

\begin{rem}
Corollary~\ref{cor:strangemetrics} should be contrasted with the implications of Gromov's $h$-principle for diffeomorphism-invariant partial differential relations.
The latter implies that every connected \emph{noncompact} manifold has a \emph{smooth} but \emph{incomplete} Riemannian metric with positive sectional curvature and another one with negative sectional curvature, see \cite[Thm.~4.5.1]{Gromov69}.
\end{rem}

\begin{rem}
In his famous precompactness theorem \cite[Sec.~8.20]{Gromov99} Gromov proves $C^{0,1}$-regularity of the limit metric occurring in that theorem.
This regularity result was later improved to $C^{1,\alpha}$ for all $\alpha\in(0,1)$, see \cite{GW88,Nikolaev,Peters}.
The $C^{1,1}$-regularity shown in Corollary~\ref{satz:approxpos} is the borderline case of this.
It would be interesting to know if this is a coincidence or if there is a deeper relationship.
\end{rem}

\begin{epi} 
Let $(V,g)$ be a Riemannian manifold with sectional curvature $\sec_g > 0$ and let $V_0=\{p\}$ consist of just one point. 
Let $K > 0$. 
In the following we use the notation introduced in the proof of Corollary~\ref{satz:approxpos}. 
Working with the local section $\gamma=  \big((\exp_p^g)^{-1}\big)^*g^{[K]}_{T_p V}$ of $X$ around $p$ the map defined in \eqref{eq:deform22} for $\varphi = g$ locally deforms $g$ through positive curvature metrics into one of constant positive curvature $K$, keeping the $1$-jet constant at $p$. 
Theorem~\ref{thm:FlexLem1} for $k = 2$ implies:

\smallskip
\emph{One can deform a Riemannian metric of positive sectional curvature on $V$ through such metrics into one which has constant sectional curvature $K > 0 $ near $p$}.
\smallskip

A similar argument works if $\{p\}$ is replaced by an embedded geodesic $V_0 \subset V$, working with local Fermi coordinates around $V_0$. 
This is an application of Theorem~\ref{thm:FlexLem1} with possibly noncompact $V_0$.
Moreover, we can treat other curvature quantities and curvature bounds.

This discussion extends to families of metrics as follows. 
Let  ${\rm Sec}_+(V)$ be the space of Riemannian metrics of positive sectional curvature on $V$, equipped with the weak $C^{\infty}$-topology, and  let ${\rm Sec}_K(V,p)$  denote the subspace of metrics of constant sectional curvature $K$ in some neighborhood of $p$, equipped with the quasi-$C^{\infty}$-topology induced by the directed system $C^{\infty}(U,X)$, $\{p\} \subset U \subset V$ open, cf.~the remarks preceding Corollary~\ref{cor:homotop}.  
Let
\[
  f_0 :    D^k  \to {\rm Sec}_+(V) 
\]
be a continuous map such that, by definition of quasi-topologies, there is a uniform neighborhood of $p$ on which $f_0 ( \xi)$ has constant sectional curvature $K$ for all $\xi \in \partial D^k$. 

Using compactness of $D^k$  we find an open neighborhood of $0 \subset T_p M$ such that $\exp^{f_0(\xi)}_p : T_p M \to M$ maps this neighborhood diffeomorphically onto an open neighborhood of $p$ for all metrics $f_0(\xi)$, $\xi \in D^k$. 
Let $\{p\} \subset U \subset V$ be an open neighborhood of $p$ which is contained in these neighborhoods for all $\xi$. 
We can choose $U$ so small that working with the $\xi$-dependent sections $\phi(\xi) = f_0(\xi)$ and $\gamma(\xi) = \big((\exp_p^{f_0(\xi)})^{-1}\big)^*g^{[K]}_{T_p V}$ of $X$ over $U$,  Equation~\eqref{eq:deform22} defines a continuous map $F: D^k \to C^{\infty}([0,1], {\rm Sec}_+(U))$ with $F(\xi)(0)  = f_0(\xi)|_U$,  and  $F(\xi)(1) = \gamma(\xi)$ for all $\xi \in D^k$.
In addition we can assume that $F(\xi)(t)$ is constant in $t$ for $\xi \in \partial D^k$.

By Addendum~\ref{add:family} we find an open neighborhood $\{p\} \subset U_0 \subset U$ and a continuous map $f: D^k \to C^{\infty}([0,1] , {\rm Sec}_+(V))$ such that for all $\xi \in D^k$ the deformations $f(\xi)$ and $F(\xi)$ coincide on $U_0$ and for all $\xi \in \partial D^k$ the deformation $f(\xi)$ is constant.  
This shows: The inclusion 
\[
    {\rm Sec}_K (V,p) \hookrightarrow {\rm Sec}_+(V)
\]
is a weak homotopy equivalence. 
\end{epi} 

\appendix
\section{\texorpdfstring{Proof of local flexibility for $k=0$}{Proof of local flexibility for k=0}}
\label{sec-AnhangA}

Let  $f_0\in C^\kappa(V,X)$, and $F\in C^\ell([0,1],C^\kappa(U,X))$, where $\ell, \kappa\in\{0,1,\ldots,\infty\}$.
We choose a complete Riemannian metric on $V$ in such a way that $\{v\in V \mid r(v)\le 2\}\subset U$ where $r$ is the distance function from $V_0$ w.r.t.\ this metric.
Let $\tau: \R \to \R$ be a $C^{\infty}$-function with $\tau(r)=1$ for $r\le 1$, $\tau(r)  =0$ for $r\ge 2 $, and $0\le \tau  \le 1$ everywhere.
Furthermore we find a smooth function $r^*  \in C^{\infty}(U \setminus V_0)$ with $r \leq r^* \leq r + 1/2$. 

Since  $\RR \subset X$ is open we can replace  $F$ by a map $F^* \in C^{\ell}([0,1], C^{\kappa}(U,X))$ with the following properties: 
\begin{enumerate}[\myicon]
\item 
For all $t \in [0,1]$ we have $F^*(t) = F(t)$ on $\{ r^*  < 1 \} \cup \{ r^*  > 2 \}$; 
\item 
$F^*(0) = F(0)$; 
\item
$F^*$ induces a map in $C^{\infty}([0,1], C^{\kappa}( \{ 1  \leq r^* \leq 2  \} ,X))$. 
\item
$F^*(t)$ solves $\RR$ over $U$ for all $t \in [0,1]$. 
\end{enumerate}

Now let  $f \in C^0([0,1], C^0(V,X))$ be defined by 
\begin{equation*} 
f(t)(v) := 
\begin{cases}
F^* \big(t\cdot \tau(r^*(v)\big)(v) & \mbox{ if $r(v)<2$,}\\
f_0(v) & \mbox{ if $r(v)\ge2$}.
\end{cases}
\end{equation*}
This is possible since $r(v) \geq 2$ implies $r^*(v) \geq r(v) \geq 2$ and hence   $\tau(r^*(v)) = 0$.  
We have $f \in C^{\ell}([0,1], C^{\kappa}(V,X))$  by the choice of $F^*$. 
Also, for $r(v) < 1/2$ we have $r^*(v) < 1$, which implies $\tau(r^*(v)) = 1$ and hence $f(t)(v) = F^*(t)(v) = F(t)(v)$ for all $t \in [0,1]$.

This means that $f$ has all the required properties  with  $U_0=\{r<1/2\}$ and Theorem~\ref{thm:FlexLem1} is proved for $k=0$.

\begin{rem}
If $\ell\ge \kappa$ then one need not introduce $F^*$ and can simply put 
$$
f(t)(v) = F \big(t\cdot \tau(r^*(v)\big)(v)
$$ 
for $r(v)<2$.
\end{rem}

\section{Proof of Lemma~\ref{lem:DefFunction} }
\label{sec-AnhangB}

The function $\hat\tau_{\delta, \eps}$ defined by 

\begin{minipage}{0.48\textwidth}
$$
\hat\tau_{\delta, \eps}(r) =
\begin{cases}
1 & \mbox{ for }r\le\delta \eps,\\
\frac{\ln (r/\eps)}{\ln\delta} & \mbox{ for }\delta \eps \le r \le \eps,\\
0 & \mbox{ for }r\ge \eps,
\end{cases}
$$
\end{minipage}
\hfill
\begin{minipage}{0.48\textwidth}
\begin{center}
\includegraphics[scale=0.3]{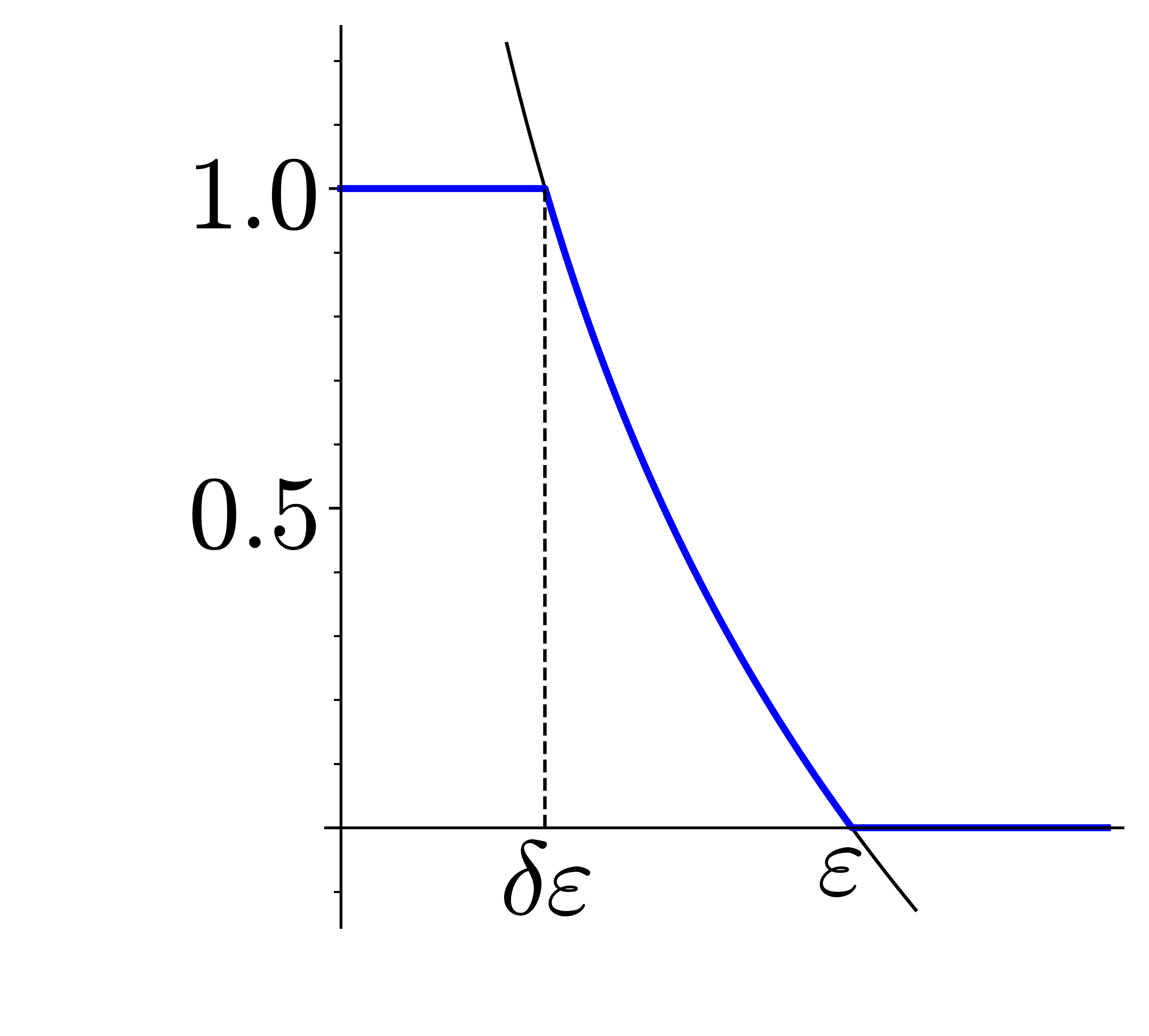}
\end{center}

\vspace{-3mm}
\Abb{The function $\hat\tau_{\delta,\eps}$}
\vspace{2mm}
\end{minipage}
has all properties listed in Lemma~\ref{lem:DefFunction} (with $C_k=1$) except that \eqref{eq:tauepsiv} does not make sense at the points $r=\delta \eps$ and $r=\eps$ where the function is not differentiable.
We need to see that we can smooth $\hat\tau_{\delta, \eps}$ near these two points without destroying properties \eqref{eq:tauepsi}--\eqref{eq:tauepsiv}.

At $r=\eps$ this is easy:
Choose a smooth function $\widehat\ln:(0,\infty)\to\R$ such that 

\begin{minipage}{0.48\textwidth}
\begin{align*}
\widehat\ln(r)&=\ln(r) \mbox{ for $r\le0.9$,}\\
\widehat\ln(r)&=0 \mbox{ for $r\ge1$ and}\\
\widehat\ln&\le 0 \mbox{ everywhere.}
\end{align*}
\end{minipage}
\hfill
\begin{minipage}{0.48\textwidth}
\begin{center}
\includegraphics[scale=0.3]{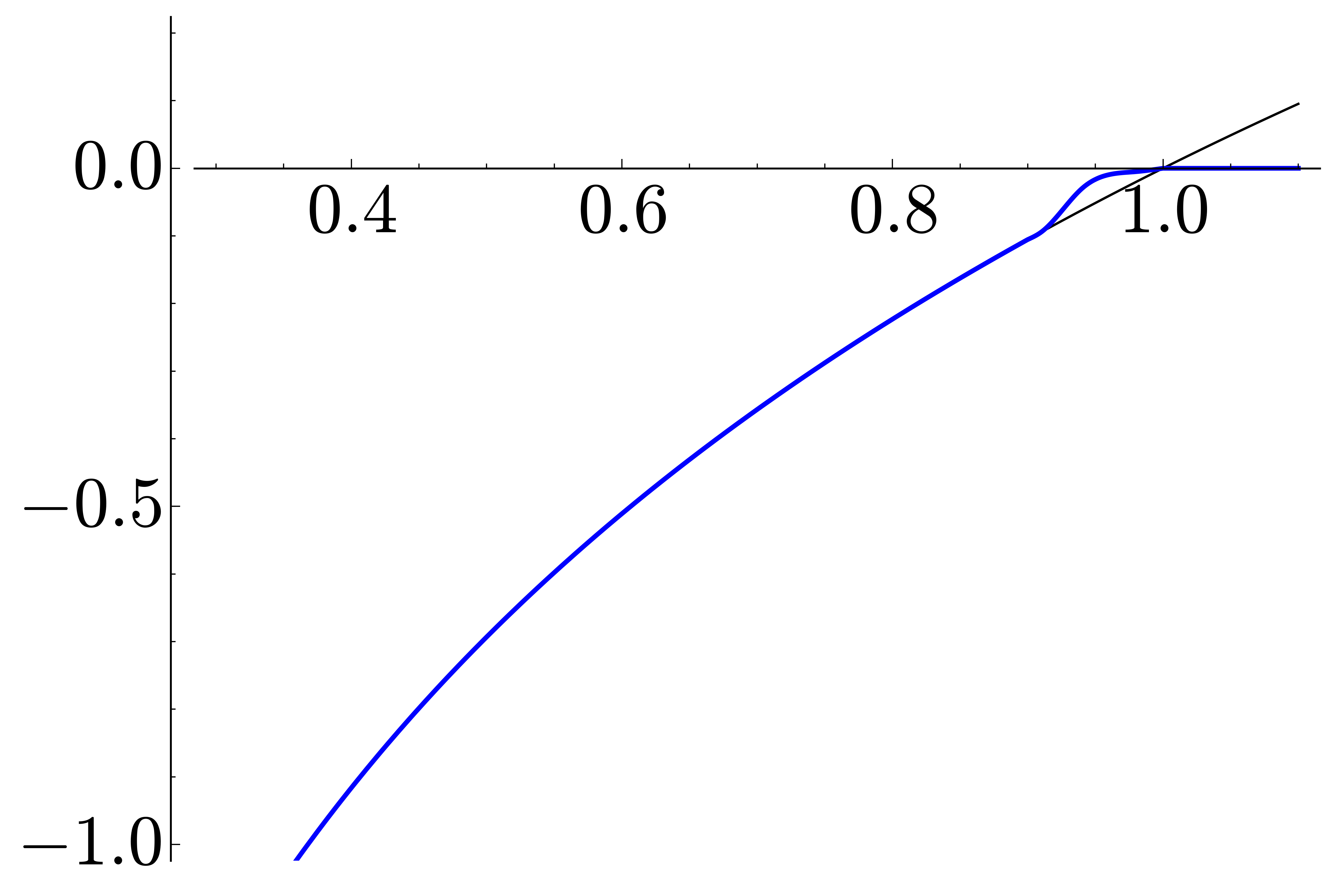}
\end{center}

\vspace{-1mm}
\Abb{The function $\widehat\ln$}
\vspace{2mm}
\end{minipage}
Then we can put $\tau_{\delta, \eps}(r) = \frac{\widehat\ln(r/\eps)}{\ln\delta}$ for $r\ge\frac\eps 2$.
Indeed,
\begin{align*}
\bigg|\frac{d^k}{dr^k}\bigg(\frac{\widehat\ln(r/\eps)}{\ln\delta}\bigg)\bigg|
&=
\bigg|\frac{\widehat\ln^{(k)}(r/\eps)}{\eps^k \ln\delta} \bigg| 
\le
\frac{\big\|\widehat\ln^{(k)}\big\|_{L^\infty[1/2,1]}}{ |\ln\delta| \eps^k} 
\le
\frac{\big\|\widehat\ln^{(k)}\big\|_{L^\infty[1/2,1]}}{|\ln \delta| r^k}
\end{align*}
because $\widehat\ln(r/\eps)$ vanishes for $r>\eps$.

At $r=\delta \eps$ we proceed as follows:
We choose a smooth function $\chi:\R\to\R$ such that

\begin{minipage}{0.48\textwidth}
\begin{align*}
\chi(r)&=1 \mbox{ for $r\le 1$,}\\
\chi(r)&=r \mbox{ for $r\ge 1.1$,}\\
\chi&\le2 \mbox{ on $[1,1.1]$, and}\\
\chi&\ge 1 \mbox{ everywhere.}
\end{align*}
\end{minipage}
\hfill
\begin{minipage}{0.48\textwidth}
\begin{center}
\includegraphics[scale=0.3]{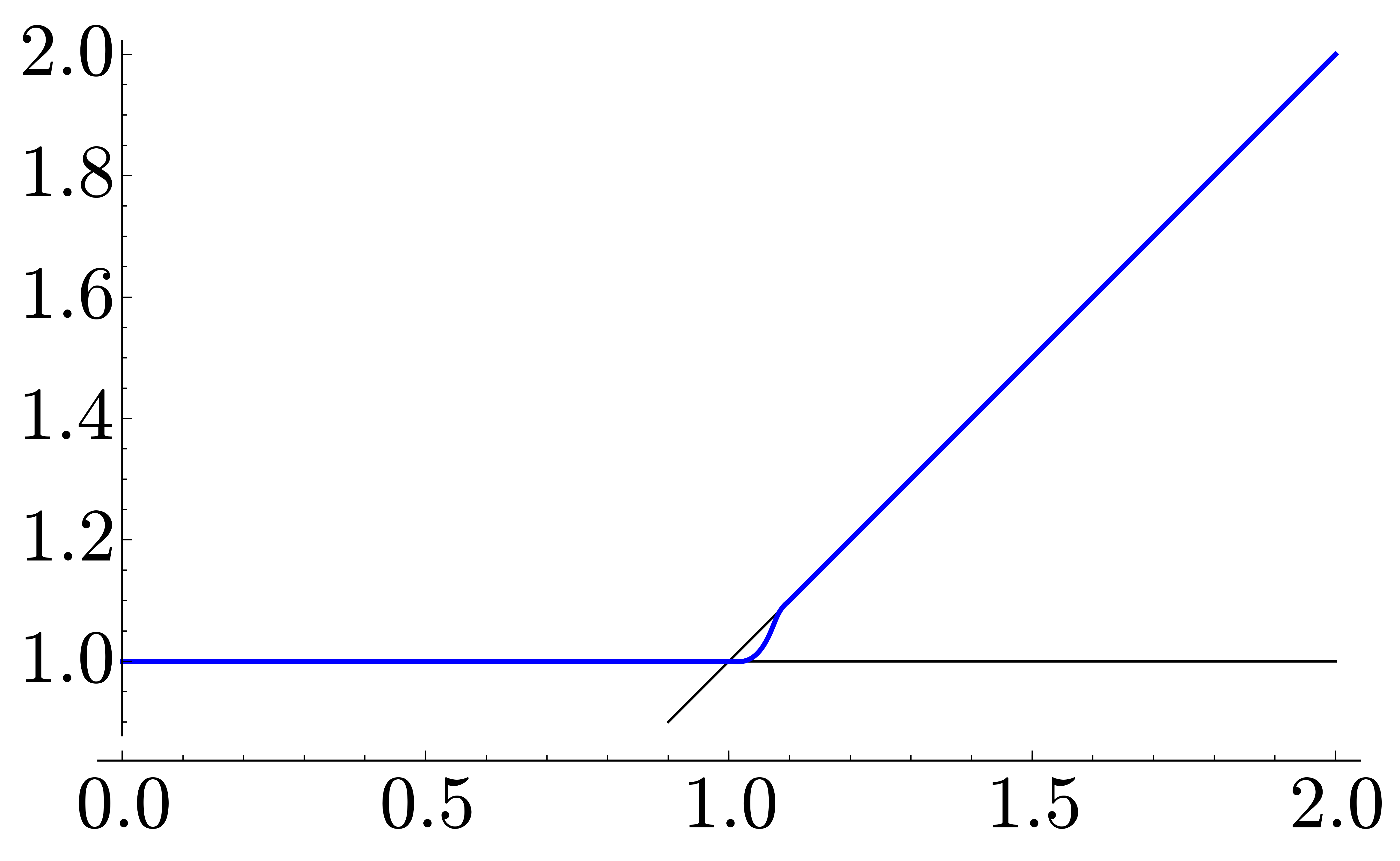}
\end{center}

\vspace{-1mm}
\Abb{The function $\chi$}
\vspace{2mm}
\end{minipage}
Now we put for $r\le\frac{\eps}{2}$:

\begin{minipage}{0.48\textwidth}
$$
\tau_{\delta, \eps}(r) = \frac{\ln(\delta \chi(r/\delta \eps))}{\ln\delta} = 1 + \frac{\ln\chi(r/\delta \eps)}{\ln\delta} .
$$
\end{minipage}
\begin{minipage}{0.48\textwidth}
\begin{center}
\includegraphics[scale=0.35]{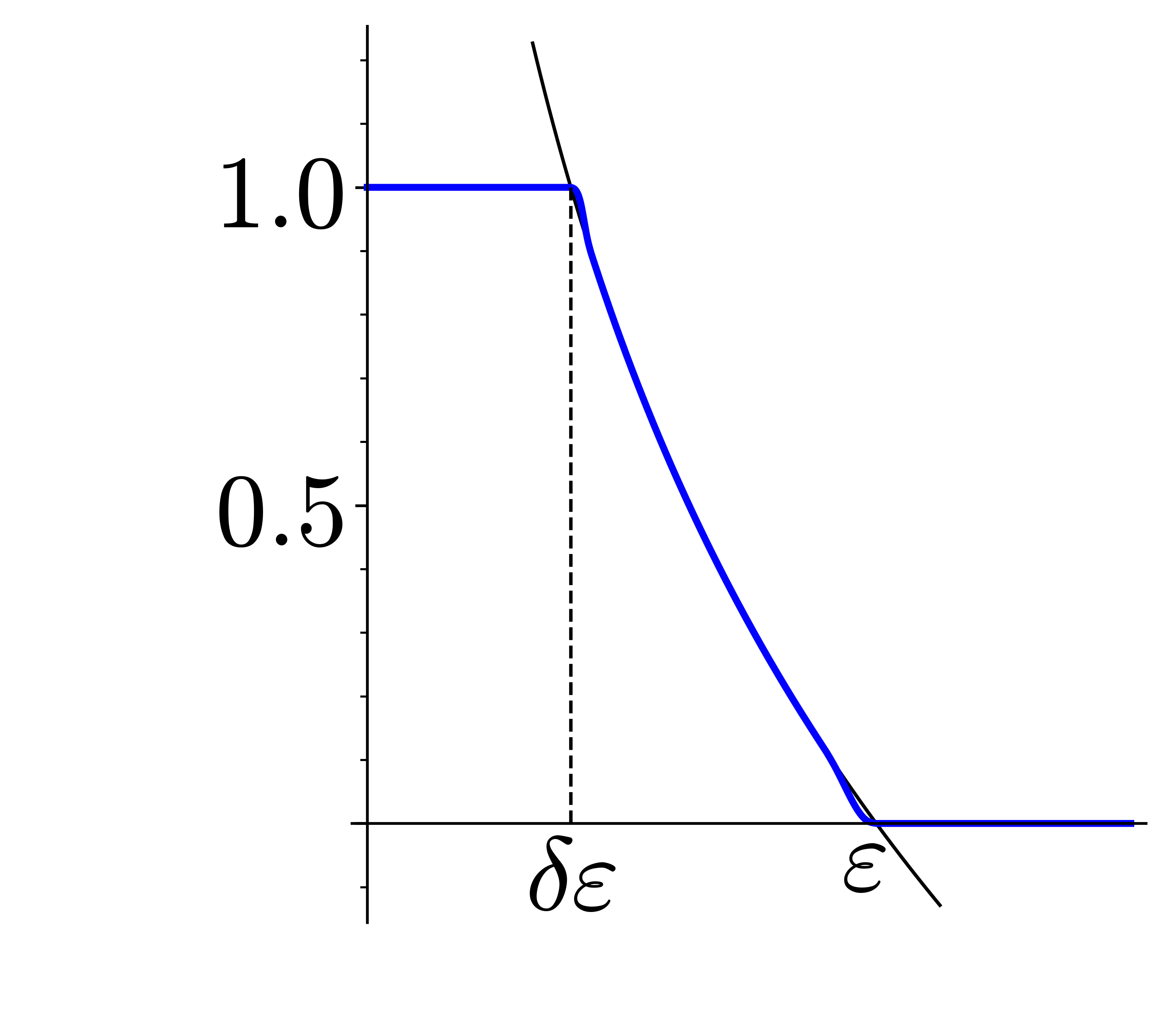}
\end{center}

\vspace{-3mm}
\Abb{The function $\tau_{\delta,\eps}$}
\vspace{2mm}
\end{minipage}
For $r\ge1.1\cdot\delta \eps$ we then have $\tau_{\delta, \eps}(r)=\hat\tau_{\delta, \eps}(r)$ and for $r\le \delta \eps$ we have $\tau_{\delta, \eps}(r)=1$.
It remains to check \eqref{eq:tauepsiv} on $[\delta \eps,1.1\cdot\delta \eps]$.
We compute
$$
\tau_{\delta, \eps}^{(k)}(r) = \frac{1}{\ln\delta}\cdot\frac{d^k}{dr^k}(\ln\circ\chi)(r/\delta \eps)\cdot\frac{1}{(\delta \eps)^{k}}.
$$
Putting $C_k'=\|\frac{d^k}{dr^k}(\ln\circ\chi)\|_{L^\infty[1,1.1]}$ we get for $r\in[\delta \eps,1.1\cdot\delta \eps]$:
\begin{equation*}
|\tau_\eps^{(k)}(r)| 
\le 
\frac{1}{|\ln\delta|}\cdot C_k' \cdot \frac{1}{(\delta \eps)^{k}}
\le
\frac{C_k'\cdot1.1^k}{|\ln\delta|\, r^k}.
\end{equation*}
This concludes the proof.
\hfill$\Box$

\section{Gauss-Bonnet for metrics with low regularity}
\label{sec-AnhangC}

The Gauss-Bonnet theorem for compact surfaces holds for Riemannian metrics with regularity lower than $C^2$.
This is a folklore fact but since it seems hard to find a reference in the literature we provide a proof.

Let $V$ be a compact surface equipped with a Riemannian metric $g$ of Sobolev regularity $H^{2,p}$ with $p>2$.
We choose a sequence $(g_\mu)$ of smooth metrics converging to $g$ in $H^{2,p}$.
Since $p>2$, the sequence also converges in $C^1$ by the Sobolev embedding theorem.

Denoting the Gauss curvature of $g$ by $K_g$ and the area element by $dA_g$, the expressions in local coordinates
\begin{gather*}
K_g = \frac12 \sum_{i,j,k=1}^2 g^{ik}\bigg(\frac{\partial \Gamma_{ik}^j}{\partial x^j} - \frac{\partial \Gamma_{jk}^j}{\partial x^i}\bigg) + \mbox{ lower order terms}    , \\
\Gamma^k_{ij} = \frac12 \sum_{m=1}^2 g^{km} \bigg(\frac{\partial g_{im}}{\partial x^j}+\frac{\partial g_{mj}}{\partial x^i}-\frac{\partial g_{ij}}{\partial x^m}\bigg)  , \\
dA_g = \sqrt{g_{11}g_{22}-g_{12}g_{21}}\, dx^1\, dx^2.
\end{gather*}
show that the second derivatives of $g$ enter linearly in the Gauss-Bonnet integrand $K_g\, dA_g$.
The terms involving no or first-order derivatives converge uniformly and the second derivatives converge in $L^p$ as $\mu\to\infty$.
Thus $K_{g_\mu}\, dA_{g_\mu} \to K_g\, dA_g$ in $L^p$.
In particular, the Gauss-Bonnet integrand of $g$ exists as an $L^p$-density.

Since integration is a bounded linear functional on $L^p$ by the H\"older inequality, we find
$$
\int_V K_{g_\mu}\, dA_{g_\mu} \to \int_V K_g\, dA_g \quad\mbox{ as }\mu\to\infty.
$$
The Gauss-Bonnet theorem for smooth metrics now implies Gauss-Bonnet for $g$.
We have shown:
\smallskip

\emph{The Gauss-Bonnet theorem for compact surfaces holds for Riemannian metrics of Sobolev regularity $H^{2,p}$ with $p>2$ and, in particular, for $C^{1,1}$-metrics.}


\begin{bibdiv}
\begin{biblist}

\bib{AG18}{article}{ 
    AUTHOR = {\'{A}lvarez-Gavela, Daniel},
     TITLE = {Refinements of the holonomic approximation lemma},
   JOURNAL = {Algebr. Geom. Topol.},
    VOLUME = {18},
      YEAR = {2018},
    NUMBER = {4},
     PAGES = {2265--2303},
      ISSN = {1472-2747},
       URL = {https://doi.org/10.2140/agt.2018.18.2265},
}

\bib{AS2015}{article}{
    AUTHOR = {Alzaareer, Hamza},
    AUTHOR = {Schmeding, Alexander},
     TITLE = {Differentiable mappings on products with different degrees of differentiability in the two factors},
   JOURNAL = {Expo. Math.},
    VOLUME = {33},
      YEAR = {2015},
    NUMBER = {2},
     PAGES = {184--222},
      ISSN = {0723-0869},
       URL = {https://doi.org/10.1016/j.exmath.2014.07.002},
}

\bib{BEE96}{book}{
    AUTHOR = {Beem, John K.},
    AUTHOR = {Ehrlich, Paul E.},
    AUTHOR = {Easley, Kevin L.},
     TITLE = {Global {L}orentzian geometry},
    SERIES = {Monographs and Textbooks in Pure and Applied Mathematics},
    VOLUME = {202},
   EDITION = {second edition},
 PUBLISHER = {Marcel Dekker, Inc., New York},
      YEAR = {1996},
     PAGES = {xiv+635},
      ISBN = {0-8247-9324-2},
}

\bib{BS2005}{article}{
    AUTHOR = {Bernal, Antonio N.},
    AUTHOR = {S\'anchez, Miguel},
     TITLE = {Smoothness of time functions and the metric splitting of globally hyperbolic spacetimes},
   JOURNAL = {Commun. Math. Phys.},
    VOLUME = {257},
      YEAR = {2005},
    NUMBER = {1},
     PAGES = {43--50},
      ISSN = {0010-3616},
       URL = {https://doi.org/10.1007/s00220-005-1346-1},
}

\bib{BS2006}{article}{
    AUTHOR = {Bernal, Antonio N.},
    AUTHOR = {S\'anchez, Miguel},
     TITLE = {Further results on the smoothability of {C}auchy hypersurfaces
              and {C}auchy time functions},
   JOURNAL = {Lett. Math. Phys.},
    VOLUME = {77},
      YEAR = {2006},
    NUMBER = {2},
     PAGES = {183--197},
      ISSN = {0377-9017},
       URL = {https://doi.org/10.1007/s11005-006-0091-5},
}

\bib{Bryant}{incollection}{
      AUTHOR = {Bryant, Robert L.},
       TITLE = {Some remarks on $G_2$-structures},
       PAGES = {75--109},
   BOOKTITLE = {Proceedings of the 12th G\"okova Geometry-Topology Conference},
   PUBLISHER = {International Press, Cambridge, MA}, 
        YEAR = {2006}, 
        ISBN = {1-57146-152-3},
         URL = {http://intlpress.com/site/pub/files/preview/bookpubs/00000461.pdf},
}

\bib{BBI}{book}{
    AUTHOR = {Burago, Dmitri},
    AUTHOR = {Burago, Yuri},
    AUTHOR = {Ivanov, Sergei},
     TITLE = {A course in metric geometry},
   SERIES = {Graduate Studies in Mathematics},
    VOLUME = {33},
 PUBLISHER = {American Mathematical Society, Providence, RI},
      YEAR = {2001},
     PAGES = {xiv+415},
      ISBN = {0-8218-2129-6},
       DOI = {10.1090/gsm/033},
       URL = {https://doi.org/10.1090/gsm/033},
}

\bib{CLS2012}{incollection}{
    AUTHOR = {Conti, Sergio}, 
    AUTHOR = {De Lellis, Camillo}, 
    AUTHOR = {Sz\'{e}kelyhidi Jr., L\'{a}szl\'{o}},
     TITLE = {{$h$}-principle and rigidity for {$C^{1,\alpha}$} isometric embeddings},
 BOOKTITLE = {Nonlinear partial differential equations},
    SERIES = {Abel Symp.},
    VOLUME = {7},
     PAGES = {83--116},
 PUBLISHER = {Springer-Verlag, Heidelberg},
      YEAR = {2012},
       URL = {https://doi.org/10.1007/978-3-642-25361-4_5},
}

\bib{Cantor}{article}{
    AUTHOR = {Dovgoshey, Oleksiy A.},
    AUTHOR = {Martio, Olli},
    AUTHOR = {Ryazanov, Vladimir I.},
    AUTHOR = {Vuorinen, Matti K.},
     TITLE = {The {C}antor function},
   JOURNAL = {Expo. Math.},
    VOLUME = {24},
      YEAR = {2006},
    NUMBER = {1},
     PAGES = {1--37},
      ISSN = {0723-0869},
       URL = {https://doi.org/10.1016/j.exmath.2005.05.002},
}

\bib{E1986}{article}{
    AUTHOR = {Eschenburg, Jost-Hinrich},
     TITLE = {Local convexity and nonnegative curvature - {G}romov's proof of the sphere theorem},
   JOURNAL = {Invent. Math.},
    VOLUME = {84},
      YEAR = {1986},
    NUMBER = {3},
     PAGES = {507--522},
      ISSN = {0020-9910},
       URL = {https://doi.org/10.1007/BF01388744},
}

\bib{F1945}{article}{
    AUTHOR = {Fox, Ralph H.},
     TITLE = {On topologies for function spaces},
   JOURNAL = {Bull. Amer. Math. Soc.},
    VOLUME = {51},
      YEAR = {1945},
     PAGES = {429--432},
      ISSN = {0002-9904},
       URL = {https://doi.org/10.1090/S0002-9904-1945-08370-0},
}

\bib{GT}{book}{
    AUTHOR = {Gilbarg, David},
    AUTHOR = {Trudinger, Neil S.},
     TITLE = {Elliptic partial differential equations of second order},
    SERIES = {Classics in Mathematics},
   EDITION = {reprint of the 1998 edition},
 PUBLISHER = {Springer-Verlag, Berlin},
      YEAR = {2001},
     PAGES = {xiv+517},
      ISBN = {3-540-41160-7},
}

\bib{Graf}{article}{
    AUTHOR = {Graf, Melanie},
     TITLE = {Volume comparison for {$C^{1,1}$}-metrics},
   JOURNAL = {Ann. Global Anal. Geom.},
    VOLUME = {50},
      YEAR = {2016},
    NUMBER = {3},
     PAGES = {209--235},
      ISSN = {0232-704X},
       URL = {https://doi.org/10.1007/s10455-016-9508-2},
}

\bib{GW78}{article}{
    AUTHOR = {Greene, Robert E.},
     TITLE = {},
   JOURNAL = {Arch. Math.},
    VOLUME = {31},
      YEAR = {1978},
    NUMBER = {},
    PAGES = {89--95},
      ISSN = {},
       URL = {},
}

\bib{GW79}{article}{
   AUTHOR = {Greene, Robert E.},
    AUTHOR = {Wu, Hung H.},
     TITLE = {{$C^{\infty }$}\ approximations of convex, subharmonic, and plurisubharmonic functions},
   JOURNAL = {Ann. Sci. \'Ecole Norm. Sup. (4)},
    VOLUME = {12},
      YEAR = {1979},
    NUMBER = {1},
     PAGES = {47--84},
      ISSN = {0012-9593},
      URL = {http://www.numdam.org/item?id=ASENS_1979_4_12_1_47_0},
}

\bib{GW88}{article}{
    AUTHOR = {Greene, Robert E.},
    AUTHOR = {Wu, Hung H.},
     TITLE = {Lipschitz convergence of {R}iemannian manifolds},
   JOURNAL = {Pacific J. Math.},
    VOLUME = {131},
      YEAR = {1988},
    NUMBER = {1},
     PAGES = {119--141},
      ISSN = {0030-8730},
       URL = {http://projecteuclid.org/euclid.pjm/1102690072},
}

\bib{Gromov69}{article}{
    AUTHOR = {Gromov, Mikhael},
     TITLE = {Stable mappings of foliations into manifolds},
   JOURNAL = {Izv. Akad. Nauk SSSR Ser. Mat.},
    VOLUME = {33},
      YEAR = {1969},
     PAGES = {707--734},
      ISSN = {0373-2436},
}

\bib{Gromov86}{book}{
    AUTHOR = {Gromov, Mikhael},
     TITLE = {Partial differential relations},
    SERIES = {Ergebnisse der Mathematik und ihrer Grenzgebiete (3)},
    VOLUME = {9},
 PUBLISHER = {Springer-Verlag, Berlin},
      YEAR = {1986},
     PAGES = {x+363},
      ISBN = {3-540-12177-3},
       DOI = {10.1007/978-3-662-02267-2},
       URL = {https://doi.org/10.1007/978-3-662-02267-2},
}

\bib{Gromov99}{book}{
    AUTHOR = {Gromov, Mikhael},
     TITLE = {Metric structures for {R}iemannian and non-{R}iemannian spaces},
    SERIES = {Progress in Mathematics},
    VOLUME = {152},
      NOTE = {Based on the 1981 French original.
              With appendices by M. Katz, P. Pansu and S. Semmes},
 PUBLISHER = {Birkh\"auser Boston, Inc., Boston, MA},
      YEAR = {1999},
     PAGES = {xx+585},
      ISBN = {0-8176-3898-9},
}

\bib{Gromov2018}{article}{
    AUTHOR = {Gromov, Mikhael},
     TITLE = {Metric inequalities with scalar curvature},
   JOURNAL = {Geom. Funct. Anal.},
    VOLUME = {28},
      YEAR = {2018},
    NUMBER = {3},
     PAGES = {645--726},
      ISSN = {1016-443X},
       URL = {https://doi.org/10.1007/s00039-018-0453-z},
}

\bib{Hirsch94}{book}{ 
    AUTHOR = {Hirsch, Morris W.},
     TITLE = {Differential topology},
   SERIES = {Graduate Texts in Mathematics}, 
   VOLUME = {33}, 
   PUBLISHER = {Springer-Verlag, New York}, 
   YEAR = {1994}, 
   PAGES = {x+222}, 
   ISBN = {0-387-90148-5}, 
 } 

\bib{Kuiper}{article}{
    AUTHOR = {Kuiper, Nicolaas H.},
     TITLE = {On {$C^1$}-isometric imbeddings. {I}, {II}},
   JOURNAL = {Nederl. Akad. Wetensch. Proc. Ser. A. {\bf 58} = Indag. Math.},
    VOLUME = {17},
      YEAR = {1955},
     PAGES = {545--556, 683--689},
}

\bib{Nash}{article}{
    AUTHOR = {Nash, John},
     TITLE = {{$C^1$} isometric imbeddings},
   JOURNAL = {Ann. of Math. (2)},
    VOLUME = {60},
      YEAR = {1954},
     PAGES = {383--396},
      ISSN = {0003-486X},
       URL = {https://doi.org/10.2307/1969840},
}

\bib{Nikolaev}{article}{
    AUTHOR = {Nikolaev, Igor G.},
     TITLE = {Smoothness of the metric of spaces with bilaterally bounded curvature in the sense of {A}. {D}. {A}leksandrov},
   JOURNAL = {Sibirsk. Mat. Zh.},
    VOLUME = {24},
      YEAR = {1983},
    NUMBER = {2},
     PAGES = {114--132},
      ISSN = {0037-4474},
}

\bib{Peters}{article}{
    AUTHOR = {Peters, Stefan},
     TITLE = {Convergence of {R}iemannian manifolds},
   JOURNAL = {Compositio Math.},
    VOLUME = {62},
      YEAR = {1987},
    NUMBER = {1},
     PAGES = {3--16},
      ISSN = {0010-437X},
       URL = {http://www.numdam.org/item?id=CM_1987__62_1_3_0},
}

\bib{Silva}{book}{
   AUTHOR = {da Silva, Ana Cannas}, 
    TITLE = {Lectures on symplectic geometry}, 
    SERIES = {Lecture Notes in Mathematics}, 
    VOLUME = {1764},
    PUBLISHER={Springer-Verlag, Berlin}, 
    YEAR = {2008}, 
    PAGES = {xiv + 247}, 
    ISBN = {978-3-540-42195-5}, 
    DOI = { 10.1007/978-3-540-45330-7}, 
    URL = {https://doi.org/10.1007/978-3-540-45330-7}, 
 }

\end{biblist}
\end{bibdiv}

\end{document}